\documentclass{article}
\RequirePackage{etex} %
\usepackage[letterpaper,margin=1in]{geometry}
\usepackage{amsmath,amssymb,amsthm,amsfonts,latexsym,bbm,xspace,graphicx,float,mathtools,mathdots}
\usepackage{braket,caption,ellipsis,xcolor,textcomp}
\usepackage{combelow} %
\usepackage{indentfirst}
\usepackage{booktabs}
\usepackage[hidelinks]{hyperref}
\usepackage[capitalize,noabbrev]{cleveref}
\crefname{ineq}{inequality}{inequalities}
\creflabelformat{ineq}{#2{\upshape(#1)}#3}

\usepackage{enumitem}

\newtheorem{theorem}{Theorem}

\usepackage{chngcntr}
\counterwithin{theorem}{section}

\newtheorem*{namedtheorem}{\theoremname}
\newcommand{\theoremname}{testing}

\newtheorem{lemma}[theorem]{Lemma}

\newtheorem{proposition}[theorem]{Proposition}
\newtheorem{fact}[theorem]{Fact}
\newtheorem{corollary}[theorem]{Corollary}

\newtheorem{assumption}[theorem]{Assumption}

\theoremstyle{definition}

\newtheorem{remark}[theorem]{Remark}

\newcommand{\calE}{\mathcal{E}}
\newcommand{\calF}{\mathcal{F}}
\newcommand{\calG}{\mathcal{G}}

\newcommand{\calL}{\mathcal{L}}

\newcommand{\calP}{\mathcal{P}}

\newcommand{\calX}{\mathcal{X}}

\newcommand{\bp}{\boldsymbol{p}}

\newcommand{\ignore}[1]{}

\usepackage{autonum}

\DeclareMathOperator*{\argmax}{argmax}
\DeclareMathOperator*{\esssup}{esssup}

\newcommand{\iid}{\text{i.i.d.}\xspace}

\newcommand{\mc}[1]{\mathcal{#1}}

\newcommand{\lb}{\left[}
\newcommand{\rb}{\right]}
\newcommand{\lp}{\left(}
\newcommand{\rp}{\right)}
\newcommand{\lbr}{\left\{}
\newcommand{\rbr}{\right\}}
\newcommand{\lv}{\left\lvert}
\newcommand{\rv}{\right\rvert}

\newcommand{\defined}{\coloneqq}

\newcommand{\simiid}{\stackrel{i.i.d.}{\sim}}

\usepackage{pgfplots}
\usepackage{graphicx}
\usepackage{subcaption}
\pgfplotsset{compat=newest}
\pgfplotsset{scaled y ticks=false}
\usepgfplotslibrary{groupplots}
\usepgfplotslibrary{dateplot}
\tikzstyle{every node}=[font=\small]
\pgfplotsset{
    yticklabel style={/pgf/number format/fixed},  
}

\pgfplotsset{compat=1.11,
 /pgfplots/ybar legend/.style={
 /pgfplots/legend image code/.code={
 \draw[##1,/tikz/.cd,yshift=-0.25em]
 (0cm,0cm) rectangle (3pt,0.8em);},
 },
}
\newcommand{\qtext}[1]{\quad\text{#1}\quad}

\title{Classifier-Based Nonparametric Sequential Hypothesis Testing}
\usepackage{authblk}
\author{Chia-Yu Hsu}
\author{Shubhanshu Shekhar}
\affil{EECS Department, University of Michigan}
\affil{\texttt{\{chiayuh, shubhan\}@umich.edu}}

\begin{document}
\date{}
\maketitle

\begin{abstract}
We consider the problem of constructing sequential power-one tests where the null and alternative classes are specified indirectly through historical or offline data. More specifically, given an offline dataset consisting of observations from $L+1$ distributions $\{P_0, P_1, \ldots, P_L\}$, and a new unlabeled data stream $\{X_t: t \geq 1\} \simiid P_\theta$, the goal is to decide between the null $H_0: \theta = 0$, against the alternative $H_1: \theta \in [L]:=\{1,\ldots,L\}$. Our main methodological contribution is a general approach for designing a level-$\alpha$ power-one test for this problem using a multi-class classifier trained on the given offline dataset. 

Working under a mild ``separability'' condition on the distributions and the trained classifier, we obtain an upper bound on the expected stopping time of our proposed level-$\alpha$ test, and then show that in general this cannot be improved. In addition to rejecting the null, we show that our procedure can also identify the true underlying distribution almost surely. We then establish a sufficient condition to ensure the required separability of the classifier, and provide some converse results to investigate the role of the size of the offline dataset and the family of classifiers among classifier-based tests that satisfy the level-$\alpha$ power-one criterion. Finally, we present an extension of our analysis for the training-and-testing distribution mismatch 
and illustrate an application to sequential change detection. Empirical results using both synthetic and real data provide support for our theoretical results.

\end{abstract}

\section{Introduction}
\label{sec:introduction}

The field of sequential hypothesis testing, initiated by~\cite{Wald1945Sequential}, involves designing statistical procedures for determining whether the null or the alternative hypothesis is true through sequentially querying a sequence of testing data. An important advantage of such procedures is that they can adaptively determine the number of queried samples needed to achieve a desired testing accuracy, %
thereby using sampling resources more efficiently than fixed-length tests. Some prior work in this area under the assumption that the null is precisely characterized, either by making strong parametric assumptions on the distributions or through some structural properties~(e.g., independence, symmetry, moment constraints, etc.). 
In this paper, we consider testing problems in which the null and alternatives are specified indirectly through \emph{offline or historical datasets} consisting of $(L+1)$ components drawn from distributions $\{P_0, \ldots, P_L\}$. 
We assume that under the null hypothesis, the test data are generated from $P_0$, while under the alternative hypothesis, we consider a general setting in which the test data are generated from $P_\theta$, where $\theta\in[L]:=\{1,\ldots,L\}$. 
Our goal is to develop a general machinery for designing level-$\alpha$ power-one tests~\cite{Darling1968} using  multiclass classifiers trained on the offline data, and analyze their statistical properties. 
 
This limited knowledge of the ground-truth distribution is motivated by several practical applications, where one cannot specify reliable probabilistic models for the null and alternative hypotheses, but historical data are available. For example, in LLM detection \cite{Chen2025,chen2025_black}, it is difficult to characterize the text-generation distributions of either humans or machines, but one typically has extensive offline datasets consisting of paragraphs generated by humans and by machines. In such settings, classical likelihood-based sequential tests developed following \cite{Wald1945Sequential} are not directly applicable because the ground-truth distributions are unknown; the only information about them comes from the offline dataset. Moreover, due to the nonparametric consideration, rather than studying strong notions of optimality, e.g., minimizing type-I and type-II errors subject to an expected stopping time constraint, which are typically feasible to be developed under parametric assumptions \cite[Section 5.2]{tartakovsky2014sequential}, we focus on a weaker but broadly tractable statistical guarantee: the level-$\alpha$, power-one criterion, initiated by \cite{Darling1968}. Under this criterion, one aims to design a \emph{stopping rule} for a uniform termination error control under the null (the probability of stopping in finite time is constrained by a constant under the null), power-one under any alternative (the probability of finitely stopping is one under the alternative), and sometimes a (minimax/instance-dependent optimal) expected stopping time under the alternative. %

Hypothesis testing based on offline training data was initiated by Gutman~\cite{Gutman89}, but he studied fixed-length testing (the testing sample size is fixed) over a finite-alphabet discrete probabilistic data-generating model. A series of sequential extensions of Gutman’s setting has been developed, starting from \cite{haghifam2021}, where they study strong optimality conditions in the sequential setting from an asymptotic viewpoint (the training sample size goes to infinity). Then, \cite{Hsu2022,Li2025} considered a mixed setting where at least one of the training and testing samples is collected sequentially. Nevertheless, those extensions consider the same probabilistic model as in Gutman’s setting; therefore, they are all parametric. For the nonparametric setting, recent works \cite{Gerber2023,gerber2024likelihoodfreehypothesistesting,Gerber2025} have developed \emph{fixed-length} likelihood-free hypothesis tests for some nonparametric family of distributions. These studies characterized the fundamental tradeoff between training and testing sample sizes \cite{gerber2024likelihoodfreehypothesistesting}, established minimax optimal testing sample complexities \cite{Gerber2025}, and generalized the framework to accommodate testing samples drawn from mixture distributions rather than pure ground-truth distributions \cite{Gerber2023}. However, while they leverage offline-training data to test the unknown labeled data and focus on the standard type-I and type-II error control, their analyses are fundamentally confined to a \emph{fixed-length} regime. They do not address the \emph{variable-length} (i.e., sequential) testing scenario, which we consider in this paper. %
For sequential nonparametric scenarios, \cite{Shekhar2024,Lheritier2015,Hsu2025} investigate sequential two-sample testing, where they assume that both the reference (training) data and the testing data are queried sequentially during the online testing phase. %
Those methods, in our understanding, are more suitable for ``from-scratch'' scenarios where no historical data are available, requiring online training data. 
In contrast, we consider the offline dataset, i.e., the number of training data is fixed. To the best of our knowledge, this is the first work studying the sequential nonparametric hypothesis testing with an offline dataset. We summarize the above discussion in Table~\ref{tab:2x2-cite}.
\begin{table}[t]
\centering
\begin{tabular}{c|cc}
\hline
 & Sequential & Fixed-length \\
\hline
Parametric & \cite{haghifam2021,Hsu2022,Li2025} (\cite{Hsu2022,Li2025} also considered online training data) & \cite{Gutman89} \\
Nonparametric & This paper (\cite{Shekhar2024,Lheritier2015,Hsu2025} considered online references) & \cite{Gerber2023,gerber2024likelihoodfreehypothesistesting,Gerber2025} \\
\hline
\end{tabular}
\caption{Different types of works for hypothesis testing based on offline training data. Column: the dynamic of testing sample size. Row: the type of family of distributions.}
\label{tab:2x2-cite}
\end{table}

To leverage offline data generated without making strong distributional assumptions, motivated by \cite{gerber2024likelihoodfreehypothesistesting}, we propose a \emph{classifier-based} sequential power-one test to bridge this gap. Utilizing a classifier in our design process is motivated the recent advances in applied machine learning~(ML), leading to the existence of powerful ML models designed for complex data. For instance, Fast-DetectGPT \cite{Bao2024} effectively identifies machine-generated text in LLM detection problems, while VGG16~\cite{Simonyan2015} and other CNN-based models demonstrate strong classification capabilities for image data. Therefore, to ensure the testing problem is in general well-posed given a classifier family $\calG$, we restrict our attention to distribution tuples $\{P_0,\ldots,P_L\}$ belonging to a \emph{distinguishable} set specified by $\calG$. Specifically, we require the existence of a \emph{separable} classifier $g \in \calG$ for the tuple: if a data sample (denote as $X$) is drawn from $P_\theta$, then the probability of $g(X)=\theta$ is greater than that of $g(X)=m$ for any $m\in\calL\setminus\{\theta\}$ for any $\theta\in\{0,\ldots,L\}$.

Our proposed stopping rule uses an e-process \cite[\S~4.1]{ramdas2020admissible} constructed by passing the test stream through the classifier. We briefly introduce the construction as follows: We first collect the offline data generated from these distributions and train a multi-classifier $g\in\calG$ that predicts which hypothesis (label) a data sample belongs to.
Then, in the online testing phase, the decision maker sequentially queries one testing sample at each time $t$ (denote as $X_t$), and it is fed into $g$ to produce a one-shot predicted hypothesis (or label) $g(X_t)\in\{0,\ldots,L\}$. We then construct a \emph{test martingale} by turning the label sequence $\{g(X_t)\}_{t\ge1}$ into an e-process via the \emph{testing-by-betting} paradigm \cite{Shafer2019}. Specifically, at each time $t$, we compare whether the current prediction equals the null label versus a running estimate $\hat{j}_t$ of the most frequent predicted label among the past predictions $\{g(X_i)\}_{i=1}^{t-1}$. We use this comparison to define a multiplicative betting update.  
The e-process stops and rejects the null when it exceeds an $\alpha$-dependent threshold. Intuitively, due to the separability provided by the distributions and the classifier, if the null hypothesis is true, $\{g(X_t)\}_{t\ge1}$ concentrate on the null label so that $\hat{j}_t$ tends to indicate the null, and the resulting e-process remains small; in contrast, if the alternative is true, $\{g(X_t)\}_{t\ge1}$ concentrate on an alternative label so that $\hat{j}_t$ tends to indicate the alternative, and the e-process grows quickly and eventually crosses the rejection threshold. With this intuition, we show that our proposed stopping rule satisfies the level-$\alpha$ power-one criterion under a \emph{fixed} distinguishable tuple by Ville's inequality \cite{Ville1939} and martingale concentration bounds. Then, we also provide a non-asymptotic upper bound (for each $\alpha\in(0,1)$) for the expected stopping time that, in general, is asymptotically optimal. Moreover, we demonstrate that the $\hat{j}_t$ will eventually indicate the true index of the underlying distribution almost surely.

The above guarantees are established under the separability condition and ignore how training components (e.g., the size of the training data and the realization of the learning model) affect the resulting classifier-based sequential test. Therefore, we next study (i) how to obtain a classifier that satisfies the separability condition and (ii) how the learning model (training sample size and the family of classifiers) impacts the classifier-based sequential level-$\alpha$ power-one test.

For (i), it is intuitive that if the size of the training samples is larger, then it is more likely to satisfy the separability condition. Specifically, we argue that for any $\{P_0,\ldots,P_L\}$ in the distinguishable set, there exists an empirical risk minimization~(ERM) algorithm that guarantees that the ERM-resulting classifier is also separable for that distribution tuple, provided $\calG$ is PAC-learnable and the training sample size satisfies a sufficient lower bound. 
Next, we address (ii) by establishing \emph{converse} bounds for all \emph{uniformly} achievable tests, i.e., tests satisfying the level-$\alpha$ power-one criterion uniformly across the distinguishable set specified by $\calG$. We first provide a lower bound on the training sample size, revealing that closer underlying distributions naturally demand a larger training sample size of training samples for the uniform achievability. Furthermore, we establish a minimax lower bound on the type-II error counterpart (specifically, the tail probability of the stopping time), which explicitly depends on the training size, the threshold $\alpha$, and the classifier family. This bound yields two key insights: increasing the training sample size reduces the lower bound (better learnability), yet even with infinite training data, the bound remains positive due to the inherent worst-case testing difficulty; and enlarging the classifier family can further reduce the lower bound.

{We further extend our analysis to address practical challenges/applications. First, to model training–testing mismatch motivated by \cite{Pan2010}, we study a distribution-shift setting where the data distribution during the online testing phase mismatches the data distribution for training the classifier applied for the testing phase. We show that the proposed stopping rule remains valid provided that the strength of the mismatch is controlled well, determined by the separability of the classifier for the training dataset.} 
Second, we use our sequential test as a building block to design a sequential change-point detection procedure and analyze its performance.

Finally, we provide empirical results with synthetic and real data to support our theoretical results. In both our basic case and extensions, we argue that our expected stopping time correctly describes the trend of the corresponding empirical behavior. Moreover, we demonstrate that if one can apply multiple-classifiers on making the decision, e.g., consider a mixture of some parallel e-processes, one can obtain a better expected stopping time. That indicates an interesting future work.

\paragraph{Organization} 
The remainder of the paper is organized as follows. In Section~\ref{sec:problem_form}, we formalize our problem. In Section~\ref{sec:main-results}, we present the proposed test, establish its level-$\alpha$, power-one guarantee, and provide its expected stopping time. Then, in Section~\ref{sec:extension}, we illustrate the extensions of our proposed stopping rule. Finally, in Section~\ref{sec:experiments}, we show experimental results that support our theories.

\section{Problem Formulation}
\label{sec:problem_form}
We begin this section by introducing statistical assumptions on the historical data and the test stream. After that we present the definition a general classifier-based sequential test, and conclude the section by describing the key performance measures.  

For some integer $L \geq 1$, let $\mathbf{P} \coloneqq \{P_0, P_1, \ldots, P_L\}$ denote a tuple of data-generating distributions on a general alphabet $\calX$. 
Let $\{X_t\}_{t\ge1} \simiid P_\theta$ denote a stream of $\calX$-valued observations, where $P_\theta$ is an unknown distribution in $\mathbf{P}$. 
We have no further information on $\mathbf{P}$, but only the historical or offline data drawn from this tuple:
\begin{align}
    T_\vartheta^N \coloneqq \{T_{\vartheta, 1}, \ldots, T_{\vartheta, N} \} \simiid P_\vartheta, \qtext{for} \vartheta \in \calL \coloneqq \{0, 1, \ldots, L\}.
\end{align}
Here, for simplicity, we have assumed that we have an equal number of offline data samples~(i.e., $N$) from each distribution in $\mathbf{P}$. Furthermore, we also assume that there is an exact match between the offline and the test distributions; that is, the unknown test distribution $P_\theta$ is exactly one of the $(L+1)$ elements of $\mathbf{P}$ from which the offline data are drawn. As we discuss in Section~\ref{sec:extension}, this  is not strictly necessary, and our proposed test can still work under certain types of distribution shifts. 

Then, our goal is to decide between the null $H_0: \theta = 0$, against the alternative $H_1: \theta \in [L]$. We consider a sequential framework where the decision-maker continuously queries a testing sample at each time slot and dynamically stops the sampling process once sufficient evidence is accumulated. The random time at which sampling terminates is defined as the \emph{stopping time}, denoted by $\tau \in \mathbb{N} \cup \{+\infty\}$. Specifically, the decision to stop is determined by the information available up to that moment, including both the offline dataset and the online testing samples received so far. In words, we denote the natural filtration $\calF_t$ as follows: 
\[
\forall\,t\ge1\quad\mathcal{F}_t \defined \sigma\!\Bigl(\{T^N_\vartheta\}_{\vartheta\in\calL},\, \{X_s\}_{s\le t}\Bigr)\qtext{and} \calF_0 \defined \sigma(\{T^N_\vartheta\}_{\vartheta\in\calL}),
\]
and the stopping time $\tau$ is adapted to $\{\mathcal{F}_t\}_{t\ge0}$.

Formally, our goal is to construct stopping times satisfying the \emph{level-$\alpha$ power-one criterion}, which  mandates that under a given ground-truth tuple $\mathbf{P}$, the probability of stopping in finite time under the null must be less than or equal to $\alpha\in(0,1)$, whereas finite-time stopping under the alternative must occur with probability one. In words, we aim to develop a stopping rule such that the resulting stopping time $\tau$ satisfies $
\mathbb{P}_{\mathbf{P}, 0}(\tau<\infty) \leq \alpha$ and $\mathbb{P}_{\mathbf{P}, \theta}(\tau<\infty) = 1$ for any $\theta\in[L]$,
where $\mathbb{P}_{\mathbf{P}, \theta}(\cdot)$ denotes the probability measure defined on the filtration $\mathcal{F}_\infty \coloneqq \sigma \lp \cup_{t \geq 0} \calF_t \rp$ when the testing samples are $X_t\simiid P_\theta$ (the $\theta$-th distribution in $\mathbf{P}$) for all $t\ge 1$, and the offline training samples are $T^N_\vartheta \simiid P_\vartheta$ for all $\vartheta\in\calL$. Moreover, we denote $\mathbb{E}_{\mathbf{P},\theta}[\cdot]$ as the expectation under the probability measure  $\mathbb{P}_{\mathbf{P}, \theta}$ for each $\theta\in[L]$.

In this paper, we are mainly interested in cases where the alphabet $\calX$ may be complicated~(e.g., images, text, etc), and thus accurate empirical distribution estimation may be computationally prohibitive. Motivated by prior works such as \cite{gerber2024likelihoodfreehypothesistesting, kim2021classification}, we bypass this challenge by introducing a \emph{multi-class classifier}, denoted by $g$ trained on the offline dataset. This classifier serves as a dimensionality reduction mechanism, mapping the intricate high-dimensional space $\calX$ into a significantly more tractable, discrete domain. Specifically, the goal of the classifier in this paper is to learn to distinguish samples from $\mathbf{P}=\{P_0,\ldots,P_L\}$ based on the offline/historical datasets.  We define the learning policy as a function $\phi$ which maps a set of data label pairs $\{T^N_\vartheta,\vartheta\}_{\vartheta\in\calL}$ to a classifier $g$ belonging to a family of classifiers $\calG$, i.e., $\phi(\{T^N_\vartheta,\vartheta\}_{\vartheta\in\calL})=g$ for a $g\in\calG$. We assume that $\calG$ is a hypothesis class of multi-class classifiers mapping from $\calX$ to $\calL$ (a relatively simple set), i.e., $g(x)\in\calL$ for any $x\in\calX$ and $g\in\calG$, and it has a bounded complexity (details in Section~\ref{sec:role_of_clf}). Moreover, we consider a fixed $\calG$ throughout this paper. 

We now introduce the notion of a \emph{distinguishable} distribution family (denoted as $\mathcal{P}_{\text{sep}}$) specified by $\calG$, to which $\mathbf{P}$ belongs. 
Specifically, we consider distributions that are inherently distinguishable by the given classifier family $\calG$. Formally, 
\begin{align}
\mathcal{P}_{\text{sep}}:=\left\{ \mathbf{P}: \exists\, g\in\calG\quad\text{such that}\quad\forall\,\theta\in\calL,\quad \mathbb{P}_{X \sim P_\theta}(g(X)=\theta) > \max_{m \neq \theta} \mathbb{P}_{X \sim P_\theta}(g(X)=m)\right\}.\label{eq:P-sep}
\end{align}
If a tuple $\mathbf{P}\in\mathcal{P}_{\text{sep}}$, we say $\mathbf{P}$ and the corresponding $g  \equiv g(\mathbf{P})$ satisfy the \emph{separability condition} or the classifier $g$ is a \emph{separable classifier} for the $\mathbf{P}$.

With this background, we can now introduce the class of sequential testing procedures that will be studied in this paper.
First, the learning policy $\phi$ outputs a classifier $g\in\calG$ trained on the offline training data $\{T^N_\vartheta\}_{\vartheta\in\calL}$. Then, under this classifier, at each time slot $t$, the decision-maker observes a sample $X_t$ generated \emph{i.i.d.} from an unknown $P_\theta$ where $\theta\in\mathcal{L}$ and inputs them to $g$ and gets $g(X_t)\in\calL$. After obtaining the classifier's output, the decision-maker either stops and declares $\tau=t$, in which case it rejects the null hypothesis, or continues sampling otherwise. Then, we denote $\pi\defined(\phi,\tau)$ as an ``overall'' classifier-based sequential hypothesis test.

\begin{remark}[Permutation]\label{remark:permutation}
    Note that the class $\calP_{\text{sep}}$ is invariant under permutation of the labels; that is, if $\mathbf{P}=\{P_0,\ldots,P_L\} \in \mathcal{P}_{\text{sep}}$, then any tuple $\{P_{\sigma(0)}, P_{\sigma(1)}, \ldots, P_{\sigma(L)}\}$ also belongs to $\mathcal{P}_{\text{sep}}$, where $\sigma$ is any permutation of $\{0, \ldots, L\}$. Moreover, their corresponding separable classifiers are the same.
\end{remark}

\section{Main Results}
\label{sec:main-results}
In Section~\ref{sec:proposed_test}, we first propose a general stopping rule based on an e-process, constructed using any separable classifier $g$ for a fixed ground-truth distribution tuple $\mathbf{P} \in \mathcal{P}_{\text{sep}}$ (note that this tuple is unknown). This general scheme satisfies the level-$\alpha$ power-one property under $\mathbf{P}$ and $g$; that is, $\mathbb{P}_{\mathbf{P}, 0}(\tau<\infty \mid \mathcal{F}_0) \leq \alpha$ and $\mathbb{P}_{\mathbf{P}, \theta}(\tau<\infty \mid \mathcal{F}_0) = 1$. By conditioning on the initial filtration $\mathcal{F}_0$, we treat the classifier $g$ as fixed, thereby decoupling the design of the stopping rule from the offline training process. Furthermore, we establish an almost sure identification result: as the testing process proceeds, the proposed test identifies the ground-truth index $\theta \in \mathcal{L}$ almost surely. Then, in Section~\ref{sec:role_of_clf}, we investigate the role of the offline training process. Specifically, we study the impact of the training sample size $N$ and the classifier family $\calG$, demonstrating some necessary and sufficient conditions for the size $N$ and how $N$ and $\calG$ affect the stopping time. %

\subsection{The proposed stopping rule and analysis}\label{sec:proposed_test}
Since we fix a tuple $\mathbf{P} \in \mathcal{P}_{\text{sep}}$ and assume a corresponding separable classifier $g$ is given, to simplify the notation and reflect that we have isolated the sequential testing phase from the randomness of the offline training process, we drop $\mathbf{P}$, and the conditioning on $\mathcal{F}_0$ from the probability measures and expectations. Specifically, we denote $\mathbb{P}_{\theta}(\tau<\infty) \equiv \mathbb{P}_{\mathbf{P}, \theta}(\tau<\infty | \mathcal{F}_0)$ and $\mathbb{E}_\theta[\tau]\equiv\mathbb{E}_{\mathbf{P}, \theta}[\tau|\mathcal{F}_0]$ for any $\theta \in \mathcal{L}$. The dependency of the training phase will be studied in Section~\ref{sec:role_of_clf}. Moreover, in this section, for notational simplicity, we define $\bp_\theta[m] \defined \mathbb{P}_{X \sim P_\theta}\lp g(X)=m \rp$ and $\bp_\theta \defined [\bp_{\theta}[0],\bp_{\theta}[1],\ldots,\bp_{\theta}[L]]$, for all $m,\theta\in\calL$.

\subsubsection{Proposed stopping rule}
\label{subsubsec:proposed-stopping-rule}

The design of our proposed stopping rule is built upon the principle of testing-by-betting \cite{Shafer2019}. This says that the amount of statistical evidence against a null is precisely quantified by the gain in wealth made by a fictitious bettor playing a repeated betting game with fair (or sub-fair) odds under the null. Formally, this involves constructing an \emph{e-process}, defined in \cite[Definition 7.3]{Ramdas2025}, such that under the null, the wealth remains low at any time; however, under the alternative, we hope that the amount of the wealth grows (exponentially) fast against the null. Then, the stopping time $\tau\in\mathbb{N}\cup\{+\infty\}$ is defined as the first $1/\alpha$-crossing of the betting-wealth process. In fact, if the betting-wealth process is a non-negative martingale (or supermartingale) under the null, then Ville's inequality immediately guarantees that the probability of stopping in finite time is upper-bounded by $\alpha$, thereby achieving the level-$\alpha$ property.

In words, we construct a betting-wealth process $\{W_n: n \geq 0\}$ adapted to the natural filtration that accumulates the evidence against the null. Hence, the stopping time (or stopping rule) is as follows: 
\[
\tau = \inf \{n \geq 1: W_n \geq 1/\alpha\}.
\]
Then, we turn to the construction of the e-process $\{W_n\}_{n\ge0}$ satisfying the properties mentioned earlier. 

In practice, constructing an e-process typically involves taking the product of sequential $e$-values, modulated by a predictable betting parameter (denoted as $\lambda$) \cite[Chapter 7]{Ramdas2025}. This formulation ensures the process forms a non-negative supermartingale under the null hypothesis while achieving rapid growth under the alternative. Specifically, to address the nonparametric nature of our underlying data distribution, we process the raw sequence $\{X_t\}_{t\ge1}$ using a classifier $g$, projecting the complex observations onto a tractable, discrete label space. We then utilize the resulting sequence of predictions $\{g(X_t)\}_{t\ge1}$ to build our e-process.

The key idea of our construction is that, based on the separability condition of the classifier, if the testing data $X$ is generated by $P_\vartheta$, the probability of $g(X)=\vartheta$ is the largest; we can use this ``edge'' to design betting games accordingly.
Then, the e-process $\{W_n\}_{n\ge0}$ is defined as follows: $\forall\,n\ge0$,
\begin{equation}
        W_0 = 1, \qtext{and} W_n = \int_{-1}^0 \lp \prod_{t=1}^n \lp 1 + \lambda \times \lp \boldsymbol{1}_{g(X_t)=0} - \boldsymbol{1}_{g(X_t)=\hat{j}_t} \rp \rp \rp d\lambda, \label{eq:e-process}
\end{equation}
where $\hat{j}_t$ for every $t \geq 1$ is defined as the index with the largest empirical probability:
\begin{align}
    \hat{j}_t = \argmax_{\theta \in \calL} \; \hat{\bp}_{t-1}[\theta] \label{eq:j-selection}
\end{align}
and $\hat{\bp}_t$ denotes the empirical estimate of the classifier's probability vector, i.e., for each $t\ge1$, $\hat{\bp}_t:=[\hat{\bp}_t[0],\hat{\bp}_t[1],\ldots,\hat{\bp}_t[L]]$, where $\hat{\bp}_t[\theta]:=(1/t)\sum_{s=1}^{t}\mathbf{1}_{g(X_s)=\theta}$ for each $\theta\in\mathcal{L}$, with the initialization $\hat{\bp}_0 = [1/(L+1), \ldots, 1/(L+1)]$, and $\mathbf{1}_{(\cdot)}$ is the indicator function.
We assume that ties in the selection of $\hat{j}_t$ are broken in a deterministic manner (such as selecting the smallest index in the case of ties). 

By the separability condition of the classifier, the process $\{1+\lambda \lp \boldsymbol{1}_{g(X_t)=0} - \boldsymbol{1}_{g(X_t)=\hat{j}_t} \rp\}_{t\ge1}$ forms a sequence of valid e-values under the null, since $ \mathbb{E}_{0}\lb 1 + \lambda\lp \boldsymbol{1}_{g(X_t)=0} - \boldsymbol{1}_{g(X_t)=j} \rp \rb \leq 1$ for all $\lambda \in (-1, 0]$, $j\in\calL$, and $t\ge1$. Therefore, initialized at $W_0=1$, the wealth process $\{W_n\}_{n\ge0}$ is inherently a non-negative supermartingale (and thus an e-process) under the null, immediately establishing the required level-$\alpha$ error control verified by Ville's inequality.
When the alternative hypothesis is true, at a high-level, the sequence of indices $\{\hat{j}_t\}_{t\ge1}$ effectively tracks the most probable class (it holds true under either the null or the alternative), allowing us to accurately identify that $j=\theta$ if $P_\theta$, $\theta\in[L]$, is the ground-truth distribution. %
Then, again by the separability condition of the classifier, we have $\mathbb{E}_\theta\lb 1 + \lambda\lp \boldsymbol{1}_{g(X)=0} - \boldsymbol{1}_{g(X)=j} \rp \rb > 1$ for any $\lambda \in (-1, 0]$. Therefore, the e-process $\{W_n\}_{n\ge0}$ grows as $n$ goes large. These are exactly the properties expected of a good e-process for this testing problem. Finally, instead of selecting a fixed betting parameter $\lambda \in (-1, 0]$, we take the mixture approach, ``averaging'' over all $\lambda\in(-1,0]$. This design choice is a technical necessity for bounding the expected stopping time, allowing us to leverage regret bounds from online convex optimization \cite{hazan2016introduction} to establish our theoretical guarantees. 

To end this section, we provide the following remark on implementing the integration for our proposed e-process.

\begin{remark}[Practical implementation of our e-process]
    In practice, the integral over $\lambda\in(-1,0]$ in the definition of $W_n$ can be approximated by a simple discretization on a sufficiently fine grid, which leads to an efficient \emph{iterative} implementation. With $\{\lambda_i\}_{i=1}^K\subset(-1,0]$ denoting a grid of $K$ points,  define 
\[
M_{n,i}
\;\defined\;
\prod_{t=1}^{n}\Bigl(1+\lambda_i\bigl(\boldsymbol{1}_{g(X_t)=0}-\boldsymbol{1}_{g(X_t)=\hat{j}_t}\bigr)\Bigr),
\qquad M_{0,i}\defined 1, \quad \text{for each } i \in [K].
\]
Since the supermartingale property is preserved under averaging, we can then define the Monte-Carlo process $\bar{W}_n \;\defined\; \frac{1}{K}\sum_{i=1}^{K} M_{n,i}$.
This process can be implemented via a recursive update rule, where upon observing $X_n$ in round $n \geq 1$, we compute $\hat{j}_n$ and then update each component $M_{n,i}$ to obtain $\bar{W}_n$ as follows: 
\begin{align}
&M_{n,i}
= M_{n-1,i}\times\Bigl(1+\lambda_i\bigl(\boldsymbol{1}_{g(X_n)=0}-\boldsymbol{1}_{g(X_n)=\hat{j}_n}\bigr)\Bigr), \qquad \forall\,i\in[K],\qquad\bar{W}_{n}
= \frac{1}{K}\sum_{i=1}^{K} M_{n,i}.
\end{align}
This process can then be used to define the stopping time $\bar{\tau}  \;\defined\;\inf\{n\ge 1:\bar{W}_n \ge 1/\alpha\}$, which in an approximation of the stopping time $\tau$ constructed using the process in~\eqref{eq:e-process}. 
The time complexity of each iteration is $O(K)$.
\end{remark}

\subsubsection{Theoretical analysis of the proposed test}
In this section, we formally describe the theoretical results of our proposed stopping rule. Our first result of this section, Theorem~\ref{theorem:sequential-test}, establishes that the proposed stopping rule $\tau$ defined in~Section~\ref{subsubsec:proposed-stopping-rule}  satisfies the level-$\alpha$ power-one criterion under a given separable classifier $g$ for a given $\mathbf{P}\in\mathcal{P}_{\text{sep}}$. Moreover, an upper bound of the expected stopping time is also provided. Our next result considers the question of identification  of the true alternative index, and in~Proposition~\ref{theorem:identification}, we show that $\hat{j}_t$ equals the true index with probability one after a (random) finite time. 
\begin{theorem}
    \label{theorem:sequential-test} 
     Our proposed stopping rule $\tau$ satisfies the following: under a given separable classifier $g\in\calG$ for a tuple $\mathbf{P}\in\mathcal{P}_{\text{sep}}$,
    \begin{align}
        &\text{Under } H_0: \; \mathbb{P}_{0}\lp \tau < \infty \rp \leq \alpha \\
        &\text{Under } H_1: \; \forall\,\theta\in[L], \; \mathbb{P}_{\theta}\lp \tau < \infty \rp  = 1 \qtext{and} \mathbb{E}_{\theta}\lb \tau \rb = \mathcal{O}\lp \frac{\log(1/(\alpha \Delta_\theta))}{\Delta_\theta^2} \rp, 
    \end{align}
    where $\Delta_{\theta} := \max_{m \in \calL} \bp_\theta[m] - \bp_{\theta}[0] = \bp_\theta[\theta] - \bp_{\theta}[0]$.
\end{theorem}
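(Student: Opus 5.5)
Under $H_0$, the plan is to show that each factor is a valid e-value. Since $\hat j_t$ is $\calF_{t-1}$-measurable and $\lambda\le 0$, we have
\[
\mathbb{E}_0\bigl[1+\lambda(\boldsymbol{1}_{g(X_t)=0}-\boldsymbol{1}_{g(X_t)=\hat j_t})\mid\calF_{t-1}\bigr]=1+\lambda(\bp_0[0]-\bp_0[\hat j_t])\le 1 .
\]
This holds because separability gives $\bp_0[0]\ge\bp_0[j]$ for every $j$, with equality when $\hat j_t=0$. Each factor is nonnegative for $\lambda\in[-1,0]$. Hence for every fixed $\lambda$ the product is a nonnegative supermartingale. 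By Fubini (conditional expectation commutes with the integral over a probability measure on $[-1,0]$), $W_n$ is also a nonnegative supermartingale with $W_0=1$. Ville's inequality then gives $\mathbb{P}_0(\sup_n W_n\ge 1/\alpha)\le\alpha$.

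Under $H_1$ with true index $\theta$, I will write $Z_t=\boldsymbol{1}_{g(X_t)=\hat j_t}-\boldsymbol{1}_{g(X_t)=0}\in\{-1,0,1\}$ and $\mu=-\lambda\in[0,1]$. Then $W_n=\int_0^1\prod_t(1+\mu Z_t)\,d\mu$. The first step is a deterministic regret bound for the mixture over a constant-rebalanced bet. The log-wealth $\mu\mapsto\sum_t\log(1+\mu Z_t)$ is concave and, for $\mu\le 1/2$, is controlled by $\log(1+x)\ge x-x^2$ for $x\ge -1/2$. Restricting the integral to an interval of width about $1/\sqrt n$ around a good $\mu^\star$ gives
\[
\log W_n\ge \max_{\mu\in[0,1/2]}\Bigl(\mu S_n-\mu^2 n\Bigr)-O(\log n),\qquad S_n=\sum_{t\le n}Z_t .
\]
This is the standard universal-portfolio/mixture argument, as in the OCO regret bounds for exp-concave losses. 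Taking $\mu=\min(1/2,S_n/(2n))$ yields $\log W_n\ge S_n^2/(4n)-O(\log n)$ whenever $S_n\ge0$.

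Next I will lower bound $S_n$. Decompose $S_n=\sum_t(\bp_\theta[\hat j_t]-\bp_\theta[0])+M_n$, where $M_n$ is a martingale with bounded increments. Let $\gamma_\theta=\bp_\theta[\theta]-\max_{m\ne\theta}\bp_\theta[m]>0$. On rounds where $\hat j_t=\theta$, the drift equals $\Delta_\theta$. A mistaken round with $\hat j_t\neq\theta$ requires $\hat\bp_{t-1}$ to deviate from $\bp_\theta$ by at least $\gamma_\theta/2$ in sup norm. By Hoeffding and a union bound over $L+1$ coordinates this has probability at most $2(L+1)e^{-(t-1)\gamma_\theta^2/2}$. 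So the expected number of wrong rounds is $O(\log(L)/\gamma_\theta^2)$, a constant independent of $\alpha$. Wrong rounds cost at most drift $\ge -1$ each. I will then use a time-uniform concentration for $M_n$ (Hoeffding plus a union bound over $n$, or a stitched/Azuma bound) to get, with probability $1-\delta_n$, $S_n\ge n\Delta_\theta-C-\sqrt{2n\log(n^2/\delta_n)}$. Power one follows since $S_n/n\to\Delta_\theta>0$ a.s. (SLLN for martingales plus finitely many wrong rounds a.s. by Borel--Cantelli). Consequently $\log W_n\gtrsim n\Delta_\theta^2/4-O(\log n)\to\infty$.

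For the expected stopping time, I will use $\mathbb{E}_\theta[\tau]\le n_0+\sum_{n\ge n_0}\mathbb{P}_\theta(\tau>n)$ together with $\{\tau>n\}\subseteq\{W_n<1/\alpha\}$. Choose $n_0\asymp\log(1/(\alpha\Delta_\theta))/\Delta_\theta^2$ so that for $n\ge n_0$ the inequality $n\Delta_\theta^2/16\ge\log(1/\alpha)+O(\log n)$ holds; the $\log n$ term is what produces the $\log(1/\Delta_\theta)$ factor. Then $\mathbb{P}_\theta(W_n<1/\alpha)$ is bounded by the probability that $S_n<n\Delta_\theta/2$. That is at most $e^{-cn\Delta_\theta^2}$ from the martingale part plus the tail of the wrong-round count exceeding $n\Delta_\theta/4$. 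Summing these gives $O(1/\Delta_\theta^2)$ plus $\gamma_\theta$-dependent constants, which the $\mathcal O$ absorbs. The main obstacle is handling the dependence between $\hat j_t$ and the wrong rounds cleanly. I would first try bounding the wrong-round count by $\sum_t\boldsymbol{1}\{\|\hat\bp_{t-1}-\bp_\theta\|_\infty\ge\gamma_\theta/2\}$ and applying Markov's inequality to its tail. If summability fails, I would instead use the last-wrong-time $\sup\{t:\hat j_t\ne\theta\}$, whose tail decays exponentially by a union bound over $t$.
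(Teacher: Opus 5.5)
\textbf{Verdict: there is a gap.} Your argument proves level-$\alpha$, power one, and the correct $\log(1/\alpha)/\Delta_\theta^2$ scaling for a fixed instance. It does not prove the stated dependence on $\Delta_\theta$, because of how you account for misidentified rounds.

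\textbf{What matches the paper.} Your $H_0$ argument (conditional e-values, Fubini, Ville) is the paper's. So is your mixture step $\log W_n\ge S_n^2/(4n)-\mathcal O(\log n)$. Power one via the martingale SLLN plus Borel--Cantelli on misidentified rounds is also fine.

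\textbf{Where it breaks.} You charge every round with $\hat j_t\neq\theta$ a unit loss. You then control the number of such rounds through $\gamma_\theta=\bp_\theta[\theta]-\max_{m\neq\theta}\bp_\theta[m]$. Let $K_n$ be the number of misidentified rounds up to $n$ and $K=K_\infty$. The remainder you must sum is
\[
\sum_n\mathbb{P}_\theta(K_n>n\Delta_\theta/4)\le\sum_n\mathbb{P}_\theta(K>n\Delta_\theta/4)\le 4\,\mathbb{E}_\theta[K]/\Delta_\theta .
\]
This is the tail-sum formula, so your worry about termwise Markov and the last-wrong-time fallback are unnecessary. Your estimate $\mathbb{E}_\theta[K]=\mathcal O\bigl((1+\log L)/\gamma_\theta^2\bigr)$ then makes this term of order $(1+\log L)/(\Delta_\theta\gamma_\theta^2)$. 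Two consequences:
\begin{itemize}
\item Since $\gamma_\theta\le\Delta_\theta$, the term is already of order $1/\Delta_\theta^3$ when $L=1$, where $\gamma_1=\Delta_1$. For fixed $\alpha$ and small $\Delta_\theta$ this dominates $\log(1/(\alpha\Delta_\theta))/\Delta_\theta^2$.
\item It is unbounded as $\gamma_\theta\to0$ with $\Delta_\theta$ fixed, i.e.\ when two alternative labels are nearly tied under $P_\theta$. Such confusions actually cost almost nothing, since both labels have edge roughly $\Delta_\theta$ over label $0$.
\end{itemize}
So ``the $\mathcal O$ absorbs it'' holds only in a fixed-instance, $\alpha\to0$ sense. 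The paper's remainder is $(L+2)\pi^2/6$, independent of $\alpha$, $\Delta_\theta$ and $\gamma_\theta$.

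\textbf{The missing idea.} Charge each round its actual drift loss $\bp_\theta[\theta]-\bp_\theta[\hat j_t]$ instead of counting misidentifications, as in a UCB regret analysis.
\begin{itemize}
\item Let $b_s:=\sqrt{\log(2n^3)/(2s)}$. On the event that $|\hat\bp_s[j]-\bp_\theta[j]|\le b_s$ for all $s\le n$ and $j\in\calL$, the selection rule $\hat\bp_{t-1}[\hat j_t]\ge\hat\bp_{t-1}[\theta]$ gives $\bp_\theta[\theta]-\bp_\theta[\hat j_t]\le 2b_{t-1}$. By Hoeffding and union bounds, this event has probability at least $1-(L+1)/n^2$.
\item Summing over $t$ gives
\[
\sum_{t\le n}\bigl(\bp_\theta[\hat j_t]-\bp_\theta[0]\bigr)\ge n\Delta_\theta-\mathcal O\bigl(\sqrt{n\log n}\bigr),
\]
with no reference to $\gamma_\theta$.
\item Combine this with Azuma for your $M_n$ (failure probability $1/n^2$) and your mixture bound. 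This gives $S_n\ge n\Delta_\theta/2$ for $n\gtrsim\log(1/\Delta_\theta)/\Delta_\theta^2$ outside an event whose probabilities are summable to $(L+2)\pi^2/6$. Hence $\mathbb{E}_\theta[\tau]\le C\log(1/(\alpha\Delta_\theta))/\Delta_\theta^2+(L+2)\pi^2/6$ for an absolute constant $C$, which is the claimed form.
\end{itemize}

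\textbf{What your route still buys.} Your misidentification analysis is still useful: it gives the almost-sure identification result essentially for free. It just should not be used to control the drift.
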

The proof of Theorem~\ref{theorem:sequential-test} is in Appendix~\ref{proof:sequential-test}. The level-$\alpha$ property follows directly from Ville's inequality, leveraging the fact that $\{W_n\}_{n\ge0}$ is a non-negative supermartingale. Regarding the expected stopping time, while we derive a non-asymptotic upper bound valid for all $\alpha \in (0,1)$, the theorem statement presents only the dominant first-order term.
The lower-order terms, which consist of problem-dependent constants related to $\Delta_\theta$ but are independent of $\alpha$, are deferred to the detailed proof.

In addition to the classic tradeoff between the $\alpha$ value and the expected stopping time,  Theorem~\ref{theorem:sequential-test} also reveals a fundamental dependence on the classifier's ability to separate the null from the alternative labels. This notion of separability is rigorously quantified by the gap in the classifier-induced probabilities $\Delta_\theta$. Consequently, a classifier with higher discriminative power naturally yields a shorter expected stopping time.
Motivated by this dependence, a natural research direction is leveraging heterogeneous classifiers to improve the expected stopping time. In \Cref{sec:exp_mixture}, we show empirically that combining multiple classifiers, e.g., by constructing a mixture of parallel e-processes, can reduce the expected stopping time when compared to test based on a single classifier. This provides an interesting future direction for the design and analysis of aggregation methods that exploit classifiers' heterogeneity.

Finally, the proposed test not only satisfies the level-$\alpha$, power-one criterion, but also enjoys an identification guarantee that says that for any $\theta\in\calL$, $\lim_{t\to\infty}\hat{j}_t=\theta$ almost surely. In other words, as the number of testing samples grows, the procedure identifies the ground-truth index with probability one. 
\begin{proposition}\label{theorem:identification}
   Under a given separable classifier $g$ for a given $\mathbf{P}\in\mathcal{P}_{\text{sep}}$, for any $t\ge1$,
    \[
    \mathbb{P}_{\theta}\lp \hat{j}_t\neq\theta \rp\in O(e^{-t}),
    \]
    implying $\mathbb{P}_\theta\lp \{\hat{j}_t\neq\theta\}\;i.o.\rp=0$ and $\mathbb{P}_{\theta}\lp \exists T<\infty\,s.t.\,\forall\,t\ge T,\,\hat{j}_t=\theta \rp=1$.
\end{proposition}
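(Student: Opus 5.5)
The plan is to reduce the event $\{\hat{j}_t\neq\theta\}$ to a finite union of deviation events for empirical frequencies, and then apply Hoeffding's inequality to each one. The identification claims will follow from Borel--Cantelli.

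Fix $\theta\in\calL$ and set the margin
\[
\delta_\theta \defined \bp_\theta[\theta]-\max_{m\neq\theta}\bp_\theta[m],
\]
which is strictly positive by the separability condition in~\eqref{eq:P-sep}. Since $\hat{j}_t$ is the argmax of $\hat{\bp}_{t-1}$, with a deterministic tie-break, the event $\{\hat{j}_t\neq\theta\}$ for $t\ge2$ forces some $m\neq\theta$ with $\hat{\bp}_{t-1}[m]\ge\hat{\bp}_{t-1}[\theta]$. For each $m\neq\theta$, define the i.i.d.\ variables
\[
Z_s^{(m)}\defined\mathbf{1}_{g(X_s)=\theta}-\mathbf{1}_{g(X_s)=m}\in\{-1,0,1\}.
\]
They have mean $\bp_\theta[\theta]-\bp_\theta[m]\ge\delta_\theta$. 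The bad event then says $\sum_{s=1}^{t-1}Z_s^{(m)}\le 0$, a deviation of at least $(t-1)\delta_\theta$ below the mean. Hoeffding's inequality, applied to variables with range $2$, bounds this by $\exp(-(t-1)\delta_\theta^2/2)$. A union bound over the $L$ competitors gives
\[
\mathbb{P}_\theta(\hat{j}_t\neq\theta)\le L\exp\bigl(-(t-1)\delta_\theta^2/2\bigr)\qquad\text{for } t\ge2.
\]
The case $t=1$ uses the uniform initialization and is covered trivially, since the bound only needs to hold up to constants.

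This is exponential decay in $t$ with rate $c_\theta=\delta_\theta^2/2$. I read the statement's $O(e^{-t})$ as exponential decay with a problem-dependent rate, i.e.\ $O(e^{-c_\theta t})$ with constants depending on $\mathbf{P}$ and $g$, and the final write-up should make this explicit. I expect this to be the main point requiring care: a literal rate of $1$ cannot hold in general, because $\delta_\theta$ can be arbitrarily small.

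The second claim follows from the summability
\[
\sum_t \mathbb{P}_\theta(\hat{j}_t\neq\theta)\le 1+L\sum_{t\ge2}e^{-(t-1)c_\theta}<\infty.
\]
The first Borel--Cantelli lemma then gives $\mathbb{P}_\theta(\{\hat{j}_t\neq\theta\}\ \text{i.o.})=0$. Taking complements, almost surely only finitely many $t$ have $\hat{j}_t\neq\theta$. Hence there is a random $T<\infty$ with $\hat{j}_t=\theta$ for all $t\ge T$, which is the last statement.
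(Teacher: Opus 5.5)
Your proposal is correct and follows the same route as the paper: a union bound over the competing labels $m\neq\theta$, Hoeffding's inequality applied to the i.i.d.\ differences $\mathbf{1}_{g(X_s)=\theta}-\mathbf{1}_{g(X_s)=m}$, and then the first Borel--Cantelli lemma. Your write-up is, if anything, more careful than the paper's on three points. You use $\ge$, which accounts for deterministic tie-breaking. You keep the factor $L$ from the union bound. And you correctly note that the decay rate is problem-dependent (e.g.\ $\delta_\theta^2/2$), so $O(e^{-t})$ has to be read as $O(e^{-c_\theta t})$.
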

This statement, proved in Appendix~\ref{proof:identification} using Hoeffding's inequality and the Borel-Cantelli Lemma, then immediately implies the following corollary.
\begin{corollary}\label{cor:indication}
    For any $\theta\neq0$, we have $\lim_{\alpha\rightarrow0}\mathbb{P}_\theta(\hat{j}_\tau=\theta)=1$.
\end{corollary}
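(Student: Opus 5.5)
The plan is to show that the event $\{\hat{j}_\tau\neq\theta\}$ forces $\tau$ to be small as $\alpha\to0$, while Proposition~\ref{theorem:identification} says $\hat{j}_t=\theta$ for all large $t$.

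\emph{Step 1 (eventual identification).} Fix $\theta\in[L]$. By Proposition~\ref{theorem:identification}, the random time
\[
T\defined\sup\{t\ge1:\hat{j}_t\neq\theta\}
\]
is $\mathbb{P}_\theta$-a.s.\ finite. By Theorem~\ref{theorem:sequential-test}, $\tau<\infty$ a.s.\ for every $\alpha$. Write $\tau_\alpha$ to stress the dependence on $\alpha$. The process $\{W_n\}$ and the sequence $\{\hat{j}_t\}$ do not depend on $\alpha$; only the threshold $1/\alpha$ does. On the event $\{\tau_\alpha<\infty\}$,
\[
\{\hat{j}_{\tau_\alpha}\neq\theta\}\subseteq\{\tau_\alpha\le T\}.
\]

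\emph{Step 2 ($\tau_\alpha\to\infty$).} The key point is that $\tau_\alpha\to\infty$ pointwise as $\alpha\to0$, and this is deterministic. From~\eqref{eq:e-process}, each factor satisfies $1+\lambda(\cdot)\le 2$ for $\lambda\in(-1,0]$, since the bracket lies in $\{-1,0,1\}$. Hence $W_n\le 2^n$ for every $n$. So $W_n\ge1/\alpha$ requires $n\ge\log_2(1/\alpha)$, which gives
\[
\tau_\alpha\ge\log_2(1/\alpha)
\]
surely.

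\emph{Step 3 (conclusion).} Combining the two steps,
\[
\mathbb{P}_\theta(\hat{j}_{\tau_\alpha}\neq\theta)\le\mathbb{P}_\theta\bigl(T\ge\log_2(1/\alpha)\bigr).
\]
The right-hand side tends to $0$ as $\alpha\to0$ because $T<\infty$ a.s. Alternatively, one can bound it by a union bound: $\sum_{t\ge\log_2(1/\alpha)}\mathbb{P}_\theta(\hat{j}_t\neq\theta)$ is the tail of a summable series, using the $O(e^{-t})$ rate. This yields $\lim_{\alpha\to0}\mathbb{P}_\theta(\hat{j}_\tau=\theta)=1$.

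The main subtlety is Step 2: one must check that the stopping rule cannot fire early merely because $\alpha$ is small. The crude growth bound $W_n\le2^n$ handles this and avoids any dominated-convergence argument over the random time.
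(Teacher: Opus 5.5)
Your proof is correct and uses the same decomposition as the paper: split according to whether $\tau$ exceeds a cutoff $K$, handle late stopping with the eventual identification of Proposition~\ref{theorem:identification}, and rule out early stopping with the growth bound $W_n\le 2^n$. The only difference is that the paper bounds $\mathbb{P}_\theta(\tau<K)$ by Markov's inequality and $\mathbb{E}_\theta[W_n]=O(2^n)$, then balances with $K\approx\log_2(1/\sqrt{\alpha})$, whereas you note that $W_n\le 2^n$ holds surely, so this term is exactly zero for $K\le\log_2(1/\alpha)$; this is a cleaner and slightly sharper version of the same argument.
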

The proof is also provided in Appendix~\ref{proof:identification}.
Intuitively, as $\alpha\to 0$ (equivalently, as the stopping threshold $1/\alpha\rightarrow\infty$), the test stops later and the probability of correctly identifying the ground-truth index at stopping approaches one by Proposition~\ref{theorem:identification}. 

\begin{remark}[Asymptotic optimality]
    Generally, showing that our derived expected stopping time is asymptotically optimal, for a fixed classifier $g$ and tuple $\mathbf{P}$ as $\alpha \to 0$, remains an open question. However, there exists a problem instance such that the expected stopping time bounded in Theorem~\ref{theorem:sequential-test} is asymptotically optimal. We defer the formal demonstration of this result to Section~\ref{sec:extension_scd}, where we establish this optimality as a consequence of our framework's application to sequential change-point detection.
\end{remark}
\begin{remark}[Training-testing mismatch]
    The statement of Theorem~\ref{theorem:sequential-test} relies only on the separability condition, and thus its conclusions remain valid under some types of mismatch between training and testing distributions. Specifically, suppose the testing sample under label $\theta$ is generated from a mismatched distribution $\tilde P_\theta$ instead of $P_\theta$ for all $\theta\in\calL$. If $\tilde{\mathbf{P}}:=\{\tilde P_0,\ldots,\tilde P_L\}\in\mathcal{P}_{\text{sep}}$ with a classifier $g$ trained by the exact ground-truth tuple $\mathbf{P}$ (in words, $g$ is also a separable classifier for $\tilde{\mathbf{P}}$), then Theorem~\ref{theorem:sequential-test} also continues to hold with an appropriately modified gap parameter. In particular, the quantity $\Delta_\theta$ in the theorem is replaced by $\tilde{\Delta}_\theta \defined \tilde{\bp}_\theta[\theta]-\tilde{\bp}_\theta[0]$ for all $\theta\in\calL$, where $\tilde{\bp}_\theta[m]:=\mathbb{P}_{X\sim\tilde P_\theta}(g(X)=m)$. The intuition is that in our analysis, we only consider the randomness of the online testing sequences $\{g(X_t)\}_{t\ge1}$ where $g$ is given.
We illustrate this generalization in detail in Section~\ref{sec:mismatch}.
\end{remark}
\begin{remark}[Application to the sequential change-point detection problem]
    Our proposed stopping rule can also be extended to the \emph{sequential change-point detection problem}. 
Specifically, we regard $P_0$ as the \emph{pre-change} distribution and $P_\theta$, for $\theta\in[L]$, as the \emph{post-change} distribution. 
To accommodate an unknown change time, we make a minor modification to our proposed test by recording a set of candidate e-processes, each initialized at a different time index, and monitoring their evidence over time. 
The details are provided in Section~\ref{sec:extension_scd}.
\end{remark}

\subsection{Role of the offline training phase}\label{sec:role_of_clf}
In the previous section, we fixed a classifier and studied the proposed stopping rule constructed via the e-process $\{W_n\}_{n\ge0}$. By doing so, we ignore the training phase based on the offline dataset, i.e., decoupling the stopping rule from the training sample size $N$ and the classifier family $\calG$ (throughout this paper, we only focus on these two variables in the training phase). However, it remains unclear how large $N$ is needed (or must be) to guarantee achievability, and how the interplay between $N$, $\calG$, and the performance of the overall test. Consequently, two fundamental questions remain: (i) For any $\mathbf{P}\in\mathcal{P}_{\text{sep}}$, is it theoretically possible to empirically obtain a corresponding separable classifier, and what is the minimum required training sample size $N$ for a classifier-based level-$\alpha$ power-one test? (ii) How do $N$ and $\calG$ impact the stopping time? We study Question (i) in Section~\ref{sec:veri_separability} and Question (ii) in Section~\ref{sec:minimax}.

\subsubsection{Empirical attainability and training sample requirements} \label{sec:veri_separability} 
In this section, we first establish a sufficient condition on the training sample size $N$ to guarantee that a separable classifier can be empirically obtained via an ERM argument. Subsequently, we derive a necessary condition on $N$ for any overall test $\pi$ achieving the level-$\alpha$ power-one criterion under a given $\mathbf{P} \in \mathcal{P}_{\text{sep}}$. Together, these two results delineate the fundamental limits of $N$, moving beyond the trivial intuition that ``more data is always better'' to mathematically quantify the strict data requirements for achievability. We make a remark for our above argument.
\begin{remark}
    We focus on deriving the lower bound of $N$, and leave obtaining tight upper bound on the training sample size $N$ to future work, as such bounds will depend heavily on the specifics of the learning policy $\phi$, which is not the focus of  this paper. Moreover, our ERM analysis serves mainly as a theoretical proof of concept to establish the \emph{existence} of a valid empirical method. In practice, as demonstrated in our experiments (Section~\ref{sec:experiments}), a separable classifier can be readily obtained using standard training procedures for Feedforward Neural Networks (FNNs) and Convolutional Neural Networks (CNNs).
\end{remark}
We now  proceed to the derivation of a sufficient condition on $N$ for obtaining a separable classifier under any given $\mathbf{P}\in\mathcal{P}_{\text{sep}}$ through an ERM method.
\paragraph{Empirical attainability}
Recall that under any $\mathbf{P} \in \mathcal{P}_{\text{sep}}$, we are given offline training data $\{T_\vartheta^N\}_{\vartheta\in\calL}$, where for each $\vartheta\in\calL$, $T_\vartheta^N \defined (T_{\vartheta,1},\ldots,T_{\vartheta,N}) \sim P_\theta^{\otimes N}$.
For any classifier $g\in\mathcal{G}$ and any $\vartheta,m\in\calL$, define the empirical and confusion probabilities
\[
\hat{p}_{\vartheta,m}(g)
\defined
\frac{1}{N}\sum_{i=1}^{N}\mathbbm{1}\{g(T_{\vartheta,i})=m\},
\qquad
p_{\vartheta,m}(g)
\defined
\mathbb{P}_{X\sim P_\vartheta}\bigl(g(X)=m\bigr).
\]
Note that $p_{\vartheta,m}(g)$ coincides with $\bp_\vartheta[m]$ used in Section~\ref{sec:proposed_test}, but we keep the dependence on $g$ explicit here for clearly constructing the ERM-argument.
Define the empirical gap $\hat{\Delta}(g)$ for any $g\in\calG$ and a data-driven classifier $\hat{g}$:
\[
\hat{\Delta}(g)
\defined
\min_{\vartheta\in\calL}\min_{m\in\calL\setminus\{\vartheta\}}\Bigl(\hat{p}_{\vartheta,\vartheta}(g)-\hat{p}_{\vartheta,m}(g)\Bigr),
\qquad
\hat g \in \argmax_{g\in\mathcal{G}}\hat{\Delta}(g).
\]
With this background, we now state the requirements on $N$ to ensure that $\hat{g}$ is separable. 
\begin{proposition}[Empirical attainability (\textbf{Informal})]
\label{prop:veri_assumption}
Suppose that for any $\mathbf{P} \in \mathcal{P}_{\text{sep}}$, there exists $g^*\in\mathcal{G}$ such that $\Delta^*
\defined
\min_{\vartheta\in\calL}\min_{m\in\calL\setminus\{\vartheta\}}\Bigl(p_{\vartheta,\vartheta}(g^*)-p_{\vartheta,m}(g^*)\Bigr)
>0$ by the definition of $\mathcal{P}_{\text{sep}}$.
Let $\gamma\defined \Delta^*/8$, and for any $\delta\in(0,1)$,
suppose that $N \gtrsim \frac{1}{\gamma^2} \log\left(\frac{L}{\delta}\right)$,
where $d$ is a positive constant related to the VC-dimension of the classifier family $\calG$ and is independent of $\delta$, $L$, $\gamma$, and $N$.
Then,  with probability at least $1-\delta$, the ERM solution $\hat g$ satisfies $p_{\vartheta,\vartheta}(\hat g)>\max_{m\in\calL\setminus\{\vartheta\}}p_{\vartheta,m}(\hat g)$ for any $\vartheta\in\calL$. 
\end{proposition}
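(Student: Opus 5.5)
The plan is a standard uniform-convergence/ERM argument built on $\gamma=\Delta^*/8$. It has three steps.

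First, we establish uniform concentration of all empirical confusion probabilities. For each pair $(\vartheta,m)\in\calL^2$, consider the binary function class $\{x\mapsto \mathbbm{1}\{g(x)=m\}: g\in\calG\}$. Its VC-dimension is controlled by a constant $d$ depending on the (Natarajan/VC-type) complexity of $\calG$. For the fixed $\vartheta$, the samples $T_{\vartheta,1},\ldots,T_{\vartheta,N}$ are i.i.d. from $P_\vartheta$. The VC uniform convergence bound then gives
\begin{align}
\mathbb{P}\Bigl(\sup_{g\in\calG}\bigl|\hat p_{\vartheta,m}(g)-p_{\vartheta,m}(g)\bigr|>\gamma\Bigr)\le c_1 N^{d} e^{-c_2 N\gamma^2}
\end{align}
for absolute constants $c_1,c_2$. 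A union bound over the $(L+1)^2$ pairs shows that the good event
\[
\calE:=\Bigl\{\forall\,\vartheta,m,\;\sup_{g\in\calG}|\hat p_{\vartheta,m}(g)-p_{\vartheta,m}(g)|\le\gamma\Bigr\}
\]
has probability at least $1-\delta$, provided $N\gtrsim \gamma^{-2}\bigl(d\log(1/\gamma)+\log(L/\delta)\bigr)$. Since $(L+1)^2$ enters only through $\log(L^2/\delta)\lesssim\log(L/\delta)$, the $d$-dependence is absorbed into the implicit constant, which yields the informal condition $N\gtrsim\gamma^{-2}\log(L/\delta)$.

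Second, on $\calE$ we compare the ERM solution with $g^*$. For every $g$, each difference $\hat p_{\vartheta,\vartheta}(g)-\hat p_{\vartheta,m}(g)$ lies within $2\gamma$ of its population version. Taking minima gives $|\hat\Delta(g)-\Delta(g)|\le 2\gamma$ uniformly in $g$, where $\Delta(g)$ denotes the population min-gap. Hence
\[
\Delta(\hat g)\ge \hat\Delta(\hat g)-2\gamma\ge \hat\Delta(g^*)-2\gamma\ge \Delta(g^*)-4\gamma=8\gamma-4\gamma=4\gamma>0,
\]
where the middle inequality is the ERM optimality of $\hat g$. This says $p_{\vartheta,\vartheta}(\hat g)-p_{\vartheta,m}(\hat g)\ge 4\gamma>0$ for every $\vartheta$ and every $m\ne\vartheta$, which is exactly the claimed separability of $\hat g$. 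It also gives a quantitative margin $\Delta^*/2$, which can then be fed into Theorem~\ref{theorem:sequential-test}.

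The main obstacle is the first step. The uniform deviation bound has to be justified for a multiclass family, so we must specify the complexity measure behind $d$. The natural route is to assume that each indicator class $\{\mathbbm{1}\{g(\cdot)=m\}\}$ has VC-dimension at most $d$, or alternatively to bound it through the Natarajan dimension and the Sauer--Shelah-type growth function. We must also keep track of how the $\log(1/\gamma)$ term is absorbed. A secondary subtlety is measurability of the supremum over $\calG$, and whether the argmax defining $\hat g$ is attained. If it is not attained, we would instead take an approximate maximizer within $\gamma$, which costs only a constant in the final margin.
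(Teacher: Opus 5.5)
Your proposal is correct and follows essentially the same route as the paper. The paper's proof is VC uniform convergence with Sauer's lemma for each indicator class $F_m$, a union bound over the $(L+1)^2$ pairs $(\vartheta,m)$, and then the chain $\Delta(\hat g)\ge\hat\Delta(\hat g)-2\gamma\ge\hat\Delta(g^*)-2\gamma>0$ on the good event. Your version additionally records the explicit margin $4\gamma=\Delta^*/2$, and, like the paper, it drops the $d\log(1/\gamma)/\gamma^2$ term only by treating $d$ and $\gamma$ as constants as $\delta\to 0$.
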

The proof idea is built on VC theorem \cite[Theorem 22.18]{wasserman2004all}, and the details of the argument are in Appendix~\ref{proof:assumption}. We note that this lower bound fundamentally depends on the underlying separability margin, $\Delta^*$, achieved by the (optimal) classifier $g^* \in \calG$. Specifically, a smaller $\Delta^*$ indicates that the ground-truth distributions are more difficult for $g^*$ to classify correctly. %
This formalizes the intuition that if the distributions within $\mathbf{P} \in \mathcal{P}_{\text{sep}}$ are inherently ``closer'' or ``less distinguishable'', more offline training data is required to correctly separate them.

After introducing the sufficient condition of $N$ to achieve the empirical attainability, we now investigate a necessary condition on the training sample size: the minimum required number of training samples for all classifier-based sequential tests achieving the level-$\alpha$ power-one property.
\paragraph{Necessary training sample requirement}
We start with the observation that if a classifier-based sequential test $\pi=(\phi, \tau)$ satisfies the level-$\alpha$ power-one criterion for a given tuple $\mathbf{P} \in \mathcal{P}_{\text{sep}}$, it inherently satisfies this criterion for any permutation of $\mathbf{P}$, as defined in Remark~\ref{remark:permutation}. In other words, merely relabeling the hypothesis and the distribution pairs does not compromise the test's validity. We regard it as a natural assumption; by the definition of $\mathcal{P}_{\text{sep}}$, both the original and permuted distribution tuples share the same separable classifier $g \in \calG$. 
Building upon this assumption, we can obtain the following result. 
\begin{proposition}[Lower bound of $N$]\label{prop:lower_bound_of_N}
    For any $\mathbf{P}\in\mathcal{P}_{\text{sep}}$, suppose a policy $\pi=(\phi,\tau)$ that achieves level-$\alpha$ power-one criterion under $\mathbf{P}$ and satisfies the assumption mentioned above, then the number of training samples for $\pi$ must satisfy 
    \[
    N\geq\frac{\log(1/\alpha)}{\min_{\theta\neq m}\mathrm{J}(P_m, P_\theta)}, \qtext{where} \mathrm{J}(P,Q):=D_{\text{KL}}(P\Vert Q)+D_{\text{KL}}(Q\Vert P). 
    \]
\end{proposition}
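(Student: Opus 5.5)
We argue by a change-of-measure (data-processing) argument on the offline training data, exploiting the permutation assumption. Fix $\mathbf{P}\in\mathcal{P}_{\text{sep}}$ and a pair $\theta\neq m$ achieving (or approaching) the minimum of $\mathrm{J}(P_m,P_\theta)$. Note $0\in\{\theta,m\}$ need not hold, but by Remark~\ref{remark:permutation} and the permutation assumption we may relabel so that one of the two indices is the null label $0$; call the other one $m\in[L]$. Consider two scenarios built from $\mathbf{P}$ and the transposition $\sigma$ swapping $0$ and $m$. In scenario A, the tuple is $\mathbf{P}$ and the test stream is drawn from $P_0$ (null true). In scenario B, the tuple is $\sigma(\mathbf{P})$ and the test stream is drawn from the component at label $m$, which is again $P_0$ (alternative true). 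The test stream therefore has the same law in both scenarios. The only difference is the offline data: the blocks $T_0^N$ and $T_m^N$ are i.i.d. from $P_0,P_m$ in A and from $P_m,P_0$ in B.

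By the level-$\alpha$ requirement in A, $\mathbb{P}_A(\tau<\infty)\le\alpha$. By the power-one requirement in B, $\mathbb{P}_B(\tau<\infty)=1$. The second is legitimate because $\pi$ is assumed valid for every permutation of $\mathbf{P}$.

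Next we apply the data-processing inequality for KL to the event $E=\{\tau<\infty\}\in\mathcal{F}_\infty$. For binary KL $d(1\Vert \beta)=\log(1/\beta)$ with $\beta\le\alpha$, this gives
\[
\log(1/\alpha)\le d\bigl(\mathbb{P}_B(E)\Vert\mathbb{P}_A(E)\bigr)\le D_{\text{KL}}\bigl(\mathbb{P}_B|_{\mathcal{F}_\infty}\Vert \mathbb{P}_A|_{\mathcal{F}_\infty}\bigr).
\]
The test-stream laws coincide and all data blocks are independent, so the chain rule for KL leaves only the two swapped offline blocks:
\[
D_{\text{KL}}(\mathbb{P}_B\Vert\mathbb{P}_A)=N\,D_{\text{KL}}(P_m\Vert P_0)+N\,D_{\text{KL}}(P_0\Vert P_m)=N\,\mathrm{J}(P_m,P_0).
\]
Hence $N\ge\log(1/\alpha)/\mathrm{J}(P_m,P_0)\ge\log(1/\alpha)/\min_{\theta\neq m}\mathrm{J}(P_m,P_\theta)$. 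This holds for whichever pair we choose, and it gives the claim.

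The main obstacle is making the reduction to a pair involving the null label rigorous. The statement minimizes over all pairs, so it suffices to exhibit \emph{one} pair yielding the bound. Choosing the pair $(0,m)$ for any $m\in[L]$ already gives $N\ge\log(1/\alpha)/\mathrm{J}(P_m,P_0)\ge\log(1/\alpha)/\min\mathrm{J}$, so no relabeling is needed. The remaining care points are two. First, KL data processing must be applied on the infinite-horizon $\sigma$-field; this is fine because the event lies in $\mathcal{F}_\infty$ and KL over the product is the sum over components. Second, if $\mathrm{J}=\infty$ the bound is trivial.
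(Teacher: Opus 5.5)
Your argument is essentially the paper's. You swap two components so that the test stream has the same law in a null world and an alternative world ($P_0$ in your version, $P_m$ in the paper's). Data processing on $\{\tau<\infty\}$ gives $\log(1/\alpha)\le d(1\Vert\mathbb{P}_A(\tau<\infty))\le D(\mathbb{P}_B\Vert\mathbb{P}_A)$. The chain rule then reduces the right-hand side to $N\bigl(D(P_m\Vert P_0)+D(P_0\Vert P_m)\bigr)$. Your constant matches the statement; the paper's proof temporarily uses a halved $\mathrm{J}$ but ends at the same bound.

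The one real error is in your closing paragraph, where you claim that a single pair suffices and \emph{no relabeling is needed}. Since $\min_{\theta\neq m}\mathrm{J}(P_m,P_\theta)\le\mathrm{J}(P_m,P_0)$ and the minimum sits in the denominator, you have $\log(1/\alpha)/\mathrm{J}(P_m,P_0)\le\log(1/\alpha)/\min_{\theta\neq m}\mathrm{J}(P_m,P_\theta)$, not $\ge$. So an arbitrary pair $(0,m)$ yields only a weaker bound. The stated bound is the strongest one, and you must prove it for the minimizing pair (equivalently, for every pair, then take the best bound).

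When the minimizing pair does not contain the null label, the relabeling in your first paragraph is therefore indispensable. It is legitimate only because of the permutation assumption: the relabeled tuple and its transposition are both permutations of $\mathbf{P}$, so level-$\alpha$ holds in the first world and power-one in the second. This is exactly how the paper proceeds: it writes $\mathbf{P}=\{P_m,P_\theta,\ldots\}$ for an arbitrary pair and concludes because the inequality ``holds for any $\theta\neq m$''. Keep your first paragraph, delete the shortcut in the last one, and the proof is complete.
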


The proof of Proposition~\ref{prop:lower_bound_of_N} is provided in Appendix~\ref{proof:lower_bound_of_N}, and it relies mainly on an application of the data processing inequality \cite[Chapter 3]{Polyanskiy_Wu_2025} for relative entropy. 
This proposition shows that when two ground-truth distributions are nearly indistinguishable, more offline samples are needed to learn a classifier that reliably separates all classes. We take the minimum over all $P_m$ and $P_\theta$ since we assume that the test $\pi$ satisfies the level-$\alpha$ power-one criterion for all permutation tuples. Hence, the ground-truth distributions of the hypotheses may change (relabeling) while the test's validity is maintained.

\subsubsection{Minimax lower bound}\label{sec:minimax}
To directly answer how the training sample size $N$ and the classifier family $\mathcal{G}$ impact the stopping time, we now establish  a minimax lower bound on an appropriate notion of ``type-II error''.
However, by shifting our focus to the minimax lower bound on the type-II error, we leverage Le Cam's two-point method \cite[Chapter 31]{Polyanskiy_Wu_2025}, an information-theoretic technique for the minimax risk's lower bound, to derive a bound that explicitly characterizes the joint impact of the training sample size and the classifier family.

To present our minimax lower bound, we need specify the test class and the distribution set over which the infimum and supremum are evaluated. For the distribution set, we consider $\mathcal{P}_{\text{sep}}$ introduced in~\eqref{eq:P-sep}. For the test class, we define the \emph{uniformly achievable} test class, denoted by $\Pi_\alpha(\mathcal{P}_{\text{sep}})$, as  all  tests $\pi$ that satisfy the level-$\alpha$ and power-one criteria uniformly over the entire family $\mathcal{P}_{\text{sep}}$. That is, 
\begin{align} \label{eq:policy_class}
    \Pi_\alpha(\mathcal{P}_{\text{sep}}) \coloneqq \left\{ \pi : \sup_{\mathbf{P} \in \mathcal{P}_{\text{sep}}} \mathbb{P}^{\pi}_{\mathbf{P}, 0}(\tau<\infty) \leq \alpha \qtext{and} \inf_{\mathbf{P} \in \mathcal{P}_{\text{sep}}} \mathbb{P}^{\pi}_{\mathbf{P}, \theta}(\tau<\infty) = 1 \ \ \forall \theta \in [L] \right\},
\end{align}
where $\mathbb{P}^{\pi}_{\mathbf{P},\theta}(\cdot)$ denotes the probability measure defined in Section~\ref{sec:problem_form}, but we add $\pi$ as the superscript to emphasize that the classifier-training ($\phi$) and the stopping rule ($\tau$) are decided by the overall policy $\pi=(\phi,\tau)$.
Moreover, we note that the stopping time inherently depends on the trained classifier, which is derived from training samples drawn from the underlying distribution $\mathbf{P}$. Consequently, evaluating the supremum and infimum in the set $\Pi_{\alpha}(\mathcal{P}_{\text{sep}})$ implies that for any $\mathbf{P}$, the learning policy $\phi$ trains a classifier $g \in \calG$ on data samples generated from that specific $\mathbf{P}$. This resulting classifier is then utilized to construct the stopping rule. We remark that if for any $\mathbf{P}\in\mathcal{P}_{\text{sep}}$, one can find the corresponding separable classifier (we theoretically show that it can be found with high probability in Section~\ref{sec:veri_separability}), then combining it with our proposed stopping rule, the induced overall test is in $\Pi_{\alpha}(\mathcal{P}_{\text{sep}})$.  With these two terms, we can define the minimax error as 
\begin{align}
    \Psi \equiv \Psi(\Pi_\alpha, \calP_{\text{sep}}, \calG) \coloneqq \inf_{\pi\in\Pi_{\alpha}(\mathcal{P}_{\text{sep}})}\sup_{\mathbf{P}\in\mathcal{\mathcal{P}_{\text{sep}}}}\mathbb{P}^\pi_{\mathbf{P},\theta}(\tau > n). \label{eq:minimax-error}
\end{align}

To obtain a lower bound on $\Psi$, we will use Le Cam's two-point method, by working with two parameterized underlying tuples $\mathbf{P}_{\nu_0}:=\{P_{i,\nu_0}\}_{i\in\calL}$ and $\mathbf{P}_{\nu_1}:=\{P_{i,\nu_1}\}_{i\in\calL}$, where $P_{i,\nu_m}$ is parameterized by $\nu_m$ for $m\in\{0,1\}$ for each $i\in\calL$ by $\nu_0$ and $\nu_1 = \nu_0 + \delta$ (for some $\delta > 0$), representing two closely related but distinct environments (we assume that $\mathbf{P}_{\nu_0}$ and $\mathbf{P}_{\nu_1}$ both are in $\mathcal{P}_{\text{sep}}$). %
Furthermore, we capture the capacity limitation of the chosen classifier family $\mathcal{G}$ through the following structural assumption.
\begin{assumption}[Classifier capacity limit (\textbf{Informal})]\label{assum:classifier_capacity_limit}
    No single classifier $g \in \mathcal{G}$ can be simultaneously optimal (in terms of distinguishability) for both environments $\nu_0$ and $\nu_1$. Specifically, the minimum aggregate suboptimality gap (we will define in Appendix~\ref{proof:type-II_lower_bound}) of any $g \in \mathcal{G}$ across both environments is strictly lower bounded by a constant $B > 0$. The formal statement of this assumption is provided in Assumption~\ref{assump:classifier_capacity_limit_formal}.
\end{assumption}
This constant $B$ represents the restriction imposed by $\mathcal{G}$. A more restricted family $\mathcal{G}$ typically yields a larger $B$, as it struggles  to adapt to both environments simultaneously more than a richer family. With this conceptual setup, we present the minimax lower bound, which demonstrates the impact of $N$ and the classifier family capacity $B$ (representing the family $\calG$).

\begin{theorem}[Minimax lower bound]\label{thm:type-II_lower_bound}
Suppose $\nu_0$ and $\nu_1$ satisfy that $\nu_1=\nu_0+\delta$ for some $\delta>0$ and there exists a finite constant $M>0$ such that $D(P_{i,\nu_0}\Vert P_{i,\nu_1})\leq M \delta^2$ for any $i\in\calL$.
Then, under Assumption~\ref{assum:classifier_capacity_limit}, 
\begin{align}
&\log\Psi\defined\log\inf_{\pi\in\Pi_{\alpha}(\mathcal{P}_{\text{sep}})}\sup_{\mathbf{P}\in\mathcal{\mathcal{P}_{\text{sep}}}}\mathbb{P}^\pi_{\mathbf{P},\theta}(\tau > n)
\ge 
-\frac{1}{1-\alpha}\!\left(
n\Big(
\max_{\nu\in\{\nu_0,\nu_1\}} D(P_{0,\nu}\Vert P_{\theta,\nu})
-\frac{B}{4}e^{-NM(L+1)\delta^2
}\Big)
+\log 2\right)
\end{align}
for any alternative index $\theta\in\calL\setminus\{0\}$.
\end{theorem}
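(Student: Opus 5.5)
The argument is a change-of-measure lower bound in the style of Le Cam's two-point method, combined with the level-$\alpha$ constraint. Fix $\theta\in[L]$ and $\pi\in\Pi_\alpha(\calP_{\text{sep}})$. First we reduce the supremum over $\calP_{\text{sep}}$ to the two environments $\mathbf{P}_{\nu_0},\mathbf{P}_{\nu_1}$, both assumed to lie in $\calP_{\text{sep}}$:
\[
\sup_{\mathbf{P}}\mathbb{P}^\pi_{\mathbf{P},\theta}(\tau>n)\ \ge\ \tfrac12\sum_{\nu\in\{\nu_0,\nu_1\}}\mathbb{P}^\pi_{\mathbf{P}_\nu,\theta}(\tau>n).
\]
For each $\nu$ we compare $\mathbb{P}^\pi_{\mathbf{P}_\nu,\theta}$ restricted to $\calF_n$ with $\mathbb{P}^\pi_{\mathbf{P}_\nu,0}$ restricted to $\calF_n$. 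These two measures share the offline data law and differ only in the first $n$ test samples. Because $\pi$ is level-$\alpha$, we have $\mathbb{P}^\pi_{\mathbf{P}_\nu,0}(\tau>n)\ge 1-\alpha$.

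We then apply the data-processing (binary KL) inequality to the event $\{\tau>n\}\in\calF_n$. This gives
\[
(1-\alpha)\log\frac{1}{\mathbb{P}_{\nu,\theta}(\tau>n)}\ \le\ \mathrm{KL}\bigl(\mathbb{P}_{\nu,0}|_{\calF_n}\,\Vert\,\mathbb{P}_{\nu,\theta}|_{\calF_n}\bigr)+\log 2,
\]
using $d(a\Vert b)\ge a\log(1/b)-\log 2$. The naive bound on the KL term is $nD(P_{0,\nu}\Vert P_{\theta,\nu})$, which already produces the $\max_\nu D(P_{0,\nu}\Vert P_{\theta,\nu})$ term and the factor $1/(1-\alpha)$.

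The subtractive correction $\frac B4 e^{-NM(L+1)\delta^2}$ reflects that the test only sees $X_t$ through the trained classifier. We therefore refine the KL term: since $\tau$ is a function of $(\{T_\vartheta^N\},\{g(X_s)\}_{s\le n})$ with $g=\phi(\cdot)$, the data-processing inequality applies to the classifier-induced laws. This gives
\[
\mathrm{KL}\le n\,\mathbb{E}\bigl[D(\bp_{0,\nu}^{g}\Vert\bp_{\theta,\nu}^{g})\bigr],
\]
where the expectation is over the offline data and the bracketed term is at most $D(P_{0,\nu}\Vert P_{\theta,\nu})$ minus a suboptimality gap of $g$ in environment $\nu$. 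This is where the formal Assumption~\ref{assump:classifier_capacity_limit_formal} enters: the aggregate gap of any single $g$ across both environments is at least $B$.

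The remaining difficulty is that $\phi$ can adapt $g$ to $\nu$, so each environment alone could incur zero gap. We handle this with a Le Cam step on the training data. The offline laws $\mathbf{P}_{\nu_0}^{\otimes N}$ and $\mathbf{P}_{\nu_1}^{\otimes N}$ have KL divergence at most $N(L+1)M\delta^2$, by tensorization and the hypothesis $D(P_{i,\nu_0}\Vert P_{i,\nu_1})\le M\delta^2$. By the Bretagnolle--Huber inequality, their overlap satisfies $\int\min(dQ_0,dQ_1)\ge\tfrac12 e^{-NM(L+1)\delta^2}$. Consequently,
\[
\mathbb{E}_{\nu_0}[\mathrm{gap}_{\nu_0}(g)]+\mathbb{E}_{\nu_1}[\mathrm{gap}_{\nu_1}(g)]\ \ge\ \int\min(dQ_0,dQ_1)\,\bigl(\mathrm{gap}_{\nu_0}+\mathrm{gap}_{\nu_1}\bigr)(\phi(\cdot))\ \ge\ \tfrac B2 e^{-NM(L+1)\delta^2}.
\]
Hence at least one environment has expected gap at least $\frac B4 e^{-NM(L+1)\delta^2}$.

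To finish, we take that environment $\nu^\star$ and bound its divergence by $\max_\nu D(P_{0,\nu}\Vert P_{\theta,\nu})$. Plugging into the inequality above yields the claimed bound on $\log\Psi$ after lower bounding the supremum by the value at $\nu^\star$ and taking the infimum over $\pi$.

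We expect the main obstacle to be making the gap step precise. This requires defining the suboptimality gap so that the classifier-induced KL is at most $D(P_{0,\nu}\Vert P_{\theta,\nu})-\mathrm{gap}_\nu(g)$, and it must be checked that the adaptivity of $\tau$ does not spoil the $n\cdot$(per-sample KL) chain rule. We would handle the latter by conditioning on $\calF_0$ first and then averaging over the offline data.
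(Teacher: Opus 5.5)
Your proposal is correct and follows essentially the paper's route: the binary-KL/data-processing step using $\mathbb{P}^\pi_{\mathbf{P},0}(\tau>n)\ge 1-\alpha$, the chain rule giving $n\,\mathbb{E}_{\mathbf{T}^N}[D(\bp^g_{0,\nu}\Vert\bp^g_{\theta,\nu})]$, and a Le Cam two-point step on the offline data via Bretagnolle--Huber and tensorization. Your per-policy choice of $\nu^\star$ is just a cleaner rendering of the paper's $\sup_\pi\min_\nu$ manipulation, and the gap you flag is resolved exactly as in the paper by taking $\mathrm{gap}_\nu(g)=\mathcal{R}(g,\mathbf{P}_\nu)=\sup_{g'\in\calG}D(\bp^{g'}_{0,\nu}\Vert\bp^{g'}_{\theta,\nu})-D(\bp^{g}_{0,\nu}\Vert\bp^{g}_{\theta,\nu})\ge 0$; this matches the formal capacity assumption and, by data processing, gives the needed inequality against $D(P_{0,\nu}\Vert P_{\theta,\nu})$.
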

The proof is deferred to Appendix~\ref{proof:type-II_lower_bound}. %
We provide the following interpretation of some parameters in this lower bound: 
\textbf{1) Tradeoff between type-I and type-II error}: As $\alpha$ increases, the level-$\alpha$ constraint becomes looser and the sequential test is allowed to stop more easily. Therefore, the minimax lower bound decreases. Conversely, a smaller $\alpha$ means a stricter level constraint, which increases the lower bound. 
\textbf{2) Problem instance matters:} The KL-divergence in the right-hand side reflects the intrinsic difficulty of distinguishing distributions of the worst-case environment over the two chosen ones. 
\textbf{3) The classifier family matters:} If the classifier $\calG$ is more restricted ($B$ is larger), which in turn increases the minimax lower bound. 
\textbf{4) Training sample size matters:} When $N$ is large, the offline data provide a more accurate representation of the underlying distributions, so the classifier can learn better and the lower bound decreases. Notably, even as $N\to\infty$, the bound does not vanish: the stopping time is still dominated by the intrinsic testing difficulty represented by the supremum term.

\begin{remark}[Minimax lower bound for the testing-training mismatch setting]
In Section~\ref{sec:mismatch}, we study a setting where the testing distribution exhibits a shift from the training distribution. A corresponding minimax lower bound for this mismatched scenario can be readily established by following the proof of Theorem~\ref{thm:type-II_lower_bound}. However, the bound is not clean and provides less insight. We leave it as future work.
\end{remark}

\section{Extensions}\label{sec:extension}
In Section~\ref{sec:proposed_test}, we have introduced a general stopping rule that satisfies the level-$\alpha$ power-one criterion and analyzed its expected stopping time. Subsequently, we investigated the fundamental roles of the training sample size and the classifier family. In this section, we extend our proposed stopping rule to accommodate one practical scenario: training-testing distribution mismatch, along with one concrete application: sequential change point detection. We note that analysis regarding the training sample size and the classifier family is omitted in these extensions, as such theoretical guarantees are either similar to our previous discussions or out of the scope of this paper. We will focus on the extensions themselves.

\subsection{Training-testing mismatch}\label{sec:mismatch}
In many practical applications \cite{Pan2010}, the data distributions between model training and online testing exhibit a \emph{mismatch}. Specifically, in our framework, suppose the classifier is trained on offline samples $T_\vartheta^N \coloneqq \{T_{\vartheta, 1}, \ldots, T_{\vartheta, N} \} \simiid P_\vartheta$ for $\vartheta \in \calL$, while the online testing samples are drawn i.i.d. from a testing tuple $\tilde{\mathbf{P}} = \{\tilde P_0,\ldots,\tilde P_{L}\}$, where $\tilde P_\theta$ may not be identical to $P_\theta$. 

There are many ways to characterize the distribution mismatch, such as the total variation or the KL-divergence between the training and testing distributions. In this section, we first consider the KL-divergence between the classifier-induced probabilities under the training and testing distributions. Specifically, given a separable classifier $g$ for the training distribution tuple $\mathbf{P}$, we define a set of $\epsilon$-mismatch distribution tuples as follows:
\begin{align}
\calE_\epsilon\equiv\calE_\epsilon(g,\mathbf{P})\defined\left\{\tilde{\mathbf{P}}=\{\tilde P_0,\ldots,\tilde P_{L}\}\mid D(\bp_\theta\|\tilde\bp_\theta)\leq \epsilon\qtext{for all}\theta\in\calL\right\},\label{eq:KL-mismatch-set}
\end{align}
where the definition of each $\tilde\bp_\theta$ is similar to the \emph{pmf} $\bp_\theta$ defined in Section~\ref{sec:proposed_test}, but the data sample is generated by $\tilde P_\theta$, i.e., for any $\theta\in\calL$, $\tilde\bp_\theta:=[\tilde\bp_\theta[0],\ldots,\tilde\bp_\theta[L]]$ and $\tilde \bp_\theta[i]=\mathbb{P}_{X\sim\tilde P_\theta}(g(X)=i)$ for all $i\in\calL$. Intuitively, if the mismatch strength $\epsilon$ is sufficiently small such that the classifier $g$ remains separable under the mismatched tuples, one can immediately utilize Theorem~\ref{theorem:sequential-test}. In this case, for $\tilde{\mathbf{P}}\in\calE_\epsilon$, the level-$\alpha$ power-one criterion is achieved, and the expected stopping time is bounded by $\mathcal{O}\left( \frac{\log(1/(\alpha \tilde \Delta_\theta))}{\tilde \Delta_\theta^2} \right)$, where $\tilde \Delta_\theta \coloneqq \tilde\bp_\theta[\theta]-\tilde\bp_\theta[0]$ for any $\theta\in[L]$. However, for larger values of $\epsilon$, the separability of the classifier $g$ can be violated. %

Therefore, we show that the largest $\epsilon$ such that $g$ remains a separable classifier for all $\tilde{\mathbf{P}}\in\calE_\epsilon$ is characterized by the strength of $g$'s separability (the ``edge'' mentioned in Section~\ref{sec:proposed_test}) on the training tuple $\mathbf{P}$. In other words, when the classifier $g$ has stronger separability on $\mathbf{P}$, the set of mismatched but separable tuples can be larger, which makes our proposed stopping time and analysis more robust. Specifically, we provide a sufficient condition for $\epsilon$ such that given $\{\Delta_{\theta,m}\}_{\theta\in\calL,m\in\calL\setminus\{\theta\}}$, where each $\Delta_{\theta,m}\defined\bp_\theta[\theta]-\bp_\theta[m]$, the classifier trained on the exact $\mathbf{P}$ is also separable for all mismatch tuples in the induced $\calE_\epsilon$. We formally describe this result in Proposition~\ref{prop:robust_result}.
\begin{proposition}[Robustness]\label{prop:robust_result}
    Under a fixed separable classifier $g$ for the training tuple $\mathbf{P}$, if the mismatch strength $\epsilon< \min_{\theta \neq m \in\calL}\frac{1}{2}\Delta_{\theta,m}^2$, then the classifier $g$ is also separable for any $\tilde{\mathbf{P}}\in\calE_\epsilon$, i.e., for any $\tilde{\mathbf{P}}\in\calE_\epsilon$, $\tilde{\bp}_\theta[\theta]>\max_{m\in\calL\setminus\{\theta\}}\tilde{\bp}_\theta[m]$, for all $\theta\in\calL$. Moreover, with such condition for the $\epsilon$, Theorem~\ref{theorem:sequential-test} still holds with the the expected stopping time upper bounded by $\mathcal{O}\left( \frac{\log(1/(\alpha \tilde \Delta_\theta))}{\tilde \Delta_\theta^2} \right)$ and $0<\tilde{\Delta}_\theta\leq\Delta_\theta+\sqrt{2\epsilon}$, where $\tilde \Delta_\theta \coloneqq \tilde\bp_\theta[\theta]-\tilde\bp_\theta[0]$, for any $\theta\in[L]$
\end{proposition}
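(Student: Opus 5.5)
The plan is to use Pinsker's inequality to turn the KL constraint defining $\calE_\epsilon$ into an $\ell_1$ (total variation) constraint between $\bp_\theta$ and $\tilde\bp_\theta$. From that we get entrywise perturbation bounds, which keep every separability gap positive. The second claim is then an immediate invocation of Theorem~\ref{theorem:sequential-test} with $\mathbf{P}$ replaced by $\tilde{\mathbf{P}}$.

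\textbf{Step 1 (Pinsker).} Fix $\tilde{\mathbf{P}}\in\calE_\epsilon$ and $\theta\in\calL$. Pinsker's inequality on the finite alphabet $\calL$ gives
\[
\mathrm{TV}(\bp_\theta,\tilde\bp_\theta)=\tfrac12\|\bp_\theta-\tilde\bp_\theta\|_1\le\sqrt{D(\bp_\theta\|\tilde\bp_\theta)/2}\le\sqrt{\epsilon/2}.
\]
The key observation is that for any pair $\theta\neq m$, the change in the gap $\bp_\theta[\theta]-\bp_\theta[m]$ is controlled by the $\ell_1$ mass on only two coordinates. Concretely,
\[
\bigl|(\tilde\bp_\theta[\theta]-\tilde\bp_\theta[m])-(\bp_\theta[\theta]-\bp_\theta[m])\bigr|\le|\tilde\bp_\theta[\theta]-\bp_\theta[\theta]|+|\tilde\bp_\theta[m]-\bp_\theta[m]|\le\|\bp_\theta-\tilde\bp_\theta\|_1\le\sqrt{2\epsilon}.
\]
Hence $\tilde\bp_\theta[\theta]-\tilde\bp_\theta[m]\ge\Delta_{\theta,m}-\sqrt{2\epsilon}$.

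\textbf{Step 2 (separability).} The hypothesis $\epsilon<\tfrac12\min_{\theta\neq m}\Delta_{\theta,m}^2$ gives $\sqrt{2\epsilon}<\Delta_{\theta,m}$ for every pair. Here we use $\Delta_{\theta,m}>0$, which holds because $g$ is separable for $\mathbf{P}$. So every tilded gap is strictly positive, i.e.\ $\tilde\bp_\theta[\theta]>\max_{m\neq\theta}\tilde\bp_\theta[m]$ for all $\theta$. Therefore $g$ is a separable classifier for $\tilde{\mathbf{P}}$, and $\tilde{\mathbf{P}}\in\calP_{\text{sep}}$.

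\textbf{Step 3 (stopping-time conclusion).} The analysis behind Theorem~\ref{theorem:sequential-test} uses only three ingredients: the fixed classifier $g$, the i.i.d.\ law of the labels $g(X_t)$, and the separability of the induced pmfs. Applying it to the label stream generated by $\tilde P_\theta$ yields the level-$\alpha$ and power-one properties and the bound $\mathcal O(\log(1/(\alpha\tilde\Delta_\theta))/\tilde\Delta_\theta^2)$. In that bound, $\tilde\Delta_\theta=\tilde\bp_\theta[\theta]-\tilde\bp_\theta[0]=\max_m\tilde\bp_\theta[m]-\tilde\bp_\theta[0]$, where the identity with the max holds by Step 2. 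Positivity of $\tilde\Delta_\theta$ is Step 2 with $m=0$. The upper bound $\tilde\Delta_\theta\le\Delta_\theta+\sqrt{2\epsilon}$ is the other side of the Step 1 inequality with $m=0$.

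\textbf{Main obstacle.} The delicate point is getting the constant right. A naive coordinatewise bound $|\tilde\bp_\theta[i]-\bp_\theta[i]|\le\mathrm{TV}$ gives a perturbation of $2\,\mathrm{TV}\le\sqrt{2\epsilon}$, which matches the stated threshold $\tfrac12\Delta^2$. It is worth checking that no sharper two-coordinate bound is needed. One should also take care that the direction of the KL, $D(\bp_\theta\|\tilde\bp_\theta)$, is immaterial for Pinsker, since the total variation distance is symmetric.
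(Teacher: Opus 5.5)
Your proposal is correct and follows the paper's proof step for step. You use Pinsker to pass from the KL ball $\calE_\epsilon$ to total variation, then a two-coordinate triangle inequality to get $|\tilde\Delta_{\theta,m}-\Delta_{\theta,m}|\le 2\,\mathrm{TV}(\bp_\theta,\tilde\bp_\theta)\le\sqrt{2\epsilon}$, then positivity of every gap under the stated threshold, and finally you apply Theorem~\ref{theorem:sequential-test} to the label stream generated by $\tilde{\mathbf{P}}$.
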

\begin{proof}
The proof of Proposition~\ref{prop:robust_result} is based on Pinsker's inequality~\cite[Theorem 7.10]{Polyanskiy_Wu_2025}.  In particular, consider any $\tilde{\mathbf{P}}\in\calE_\epsilon$, and note that for any $\theta\in\calL$, we have $D(\bp_\theta\|\tilde\bp_\theta)\geq 2\text{TV}^2(\bp_\theta,\tilde\bp_\theta)$ due to Pinsker's inequality. Then, since $\bp_\theta$ and $\tilde\bp_\theta$ are discrete \emph{pmf}, by the definition of the total variation distance and the triangle inequality, $\text{TV}(\bp_\theta,\tilde\bp_\theta)$ is lower bounded by $\frac{1}{2}|\tilde\Delta_{\theta,m}-\Delta_{\theta,m}|$ for any $m\in\calL\setminus\{\theta\}$, where $\tilde\Delta_{\theta,m}\defined\tilde\bp_{\theta}[\theta]-\tilde\bp_\theta[m]$. Therefore, by the definition of $\epsilon$-mismatch set $\calE_\epsilon$, we can further get $|\tilde\Delta_{\theta,m}-\Delta_{\theta,m}|\leq\sqrt{2\epsilon}$ for any $\theta\in\calL$, $m\in\calL\setminus\{\theta\}$. As a result, by $\Delta_{\theta,m}-\sqrt{2\epsilon}\leq\tilde\Delta_{\theta,m}\leq\Delta_{\theta,m}+\sqrt{2\epsilon}$, if we set $\epsilon<\frac{1}{2}(\min_{\theta\in\calL}\min_{m\in\calL\setminus\{\theta\}}\Delta_{\theta,m})^2$, then the lower bound of $\tilde\Delta_{\theta,m}$, $\Delta_{\theta,m}-\sqrt{2\epsilon}$, is positive for any $\theta\in\calL$ and $m\in\calL\setminus\{\theta\}$. Therefore, the classifier $g$ is also separable for any $\tilde{\mathbf{P}}\in\calE_\epsilon$ and  $\tilde\Delta_\theta\equiv\tilde\Delta_\theta[0]>0$ for any $\theta\in[L]$, and thus~Theorem~\ref{theorem:sequential-test} applies. 
\end{proof}
As the proof of~Proposition~\ref{prop:robust_result} suggests, we can also define the mismatch using  total variation; that is,  the $\epsilon$-mismatch set is  
\[
\calE^{\text{TV}}_\epsilon\equiv\calE^{\text{TV}}_\epsilon(g,\mathbf{P})\defined\left\{\tilde{\mathbf{P}}=\{\tilde P_0,\ldots,\tilde P_{L}\}\mid \text{TV}(\bp_\theta,\tilde\bp_\theta)\leq \epsilon\qtext{for all}\theta\in\calL\right\},
\]
where $\text{TV}(P,Q)=\frac{1}{2}\lVert P-Q\rVert_1$ if $P$ and $Q$ are probability mass functions.
With this modification, we can state a version of Proposition~\ref{prop:robust_result}, but with the condition on $\epsilon$ replaced by $\epsilon<\frac{1}{2}\min_{\theta\in\calL}\min_{m\in\calL\setminus\{\theta\}}\Delta_{\theta,m}$, leading to $\tilde\Delta_\theta\in(0,\Delta_\theta+2\epsilon]$ for any $\theta\in\calL$.

Finally, we end this section with a simple but somewhat counterintuitive observation:  distribution shift does not always degrade the expected stopping time. If the shift renders the mismatched tuple $\tilde{\mathbf{P}}$ more distinguishable, pushing the underlying distributions further apart in the classifier's decision space, it is possible that $\tilde \Delta_\theta > \Delta_\theta$ for some $\theta\in[L]$, thereby accelerating the stopping time.

\subsection{Sequential change detection}\label{sec:extension_scd}
In this section, we demonstrate that our proposed stopping rule can be applied to the standard sequential change-point detection (SCD) task \cite{Lai1995}.
The goal in SCD problems is to monitor a stream of observations, and identify any abrupt changes in distributions (at an unknown time $T$ called the \emph{change point}) with minimal delay, while controlling false alarms within a tolerable limit.
Specifically, we assume that there are a pre-change distribution $P_0$ and a post-change distribution $P_\theta$, for $\theta\in[L]$. At some unknown change point $T \in \mathbb{N} \cup \{\infty\}$, the underlying distribution of the testing samples shifts abruptly from $P_0$ to $P_\theta$. Furthermore, these true data-generating distributions are totally unknown to the decision-maker, beyond the access to offline data. In words, for a fixed $\theta\in[L]$, the testing samples follow $X_t \simiid P_0$ when $t<T$, but change to $X_t \simiid P_\theta$ for $t\ge T$.
We write $\mathbb{E}_\infty[\cdot]$ for ``expectation'' under the no-change regime ($T=\infty$), and $\mathbb{E}_{T, \theta}[\cdot]$ for ``expectation'' under a change to $P_\theta$ at some finite time $T<\infty$.

Our proposed e-process in~\eqref{eq:e-process} can be adapted to the SCD problem as follows: Under a tuple $\mathbf{P}\in\mathcal{P}_{\text{sep}}$, let $\{X_n: n \geq 1\}$ denote the testing samples. For any $n \geq 1$, and $1 \leq k \leq n$, let $\{W^{(k)}_n: n \geq k\}$ denote the e-process ``starting from $k$'': 
\[
W^{(k)}_n \defined \int_{-1}^0 \lp \prod_{t=k}^n \lp 1 + \lambda \times \lp \boldsymbol{1}_{g(X_t)=0} - \boldsymbol{1}_{g(X_t)=\hat{j}_t} \rp \rp \rp d\lambda.
\]
Then, we define the $e$-detector \cite{shin2022detectors} process $\{M_n: n \geq 1\}$ as $M_n = \max_{1\leq k\leq n} W_n^{(k)}$ for any $n\ge1$, and build the change detection stopping time: $ \tau = \inf \{n \geq 1: M_n \geq 1/\alpha\}$. 

Thus the SCD procedure initializes a new power-one test in every round, and declares a detection as soon of any of the ``active'' tests reject the null. 
Intuitively, if the change occurs at time $T$ and $n\ge T$, then the process started at $k=T$, namely $W^{(T)}_n$, (and those started after $T$) evolves entirely on post-change samples and should therefore have the same growth behavior as in the sequential hypothesis testing setting under $H_1$. Since the observations are i.i.d.\ within each regime, the resulting expected detection delay should be the same as that in Section~\ref{sec:main-results}. We formalize this in our next result. 
\begin{theorem}
    \label{theorem:change-detection}
    For any distribution tuple $\mathbf{P} \in \mathcal{P}_{\text{sep}}$, let $P_0$ and $P_\theta$, $\theta\in[L]$  denote the pre- and post-change distributions respectively. Then, for any $\theta\in[L]$, we have 
    \begin{align}
        &\mathbb{E}_{\infty}[\tau] \geq 1/\alpha, \qtext{and} 
         \sup_{T \in \mathbb{N}}\esssup \mathbb{E}_{T, \theta}[(\tau-T)^+|\mathcal{F}^{(0)}_T] = \mc{O}\lp  \frac{\log(1/\alpha \Delta_\theta)}{\Delta^2_\theta}\rp, 
    \end{align}
    where $\mathcal{F}^{(0)}_T:=\sigma\{\{g(X_t)\}_{t\leq T}\}$ and $\Delta_\theta$ has the same definition as in~\Cref{theorem:sequential-test}. 
\end{theorem}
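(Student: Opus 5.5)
The two claims are handled separately. For the false-alarm bound $\mathbb{E}_\infty[\tau]\ge 1/\alpha$, I would use the standard e-detector argument of \cite{shin2022detectors}. Set $\Lambda_n \defined \sum_{k=1}^n W^{(k)}_n$. Since each $W^{(k)}_n$ is a mixture over $\lambda\in[-1,0]$ of products of factors $1+\lambda(\boldsymbol{1}_{g(X_t)=0}-\boldsymbol{1}_{g(X_t)=\hat j_t})$, with $\hat j_t$ predictable, each factor has conditional mean at most $1$ under $P_0$ by the separability of $g$ (exactly as in the proof of Theorem~\ref{theorem:sequential-test}). Hence
\[
\mathbb{E}_\infty[\Lambda_n-\Lambda_{n-1}\mid\calF_{n-1}]\le \sum_{k<n}W^{(k)}_{n-1}+1-\Lambda_{n-1}=1 .
\]
So $\Lambda_n-n$ is a supermartingale with $\Lambda_0=0$. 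Optional stopping at $\tau\wedge n$ gives $\mathbb{E}_\infty[\Lambda_{\tau\wedge n}]\le\mathbb{E}_\infty[\tau\wedge n]$. If $\mathbb{E}_\infty[\tau]=\infty$ there is nothing to prove. Otherwise, letting $n\to\infty$ with Fatou's lemma yields $\mathbb{E}_\infty[\tau]\ge\mathbb{E}_\infty[\Lambda_\tau]\ge\mathbb{E}_\infty[M_\tau]\ge1/\alpha$, using $\Lambda_\tau\ge M_\tau$ and $\tau<\infty$ a.s.

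For the detection delay, fix $T$ and condition on $\mathcal{F}^{(0)}_T$. Because $M_n\ge W^{(T)}_n$ for $n\ge T$, we have $(\tau-T)^+\le \tau^{(T)}-T+1$, where $\tau^{(T)}\defined\inf\{n\ge T: W^{(T)}_n\ge1/\alpha\}$. The process $W^{(T)}$ uses only post-change samples, which are i.i.d. from $P_\theta$ and independent of $\mathcal{F}^{(0)}_T$. The one catch is that the label estimates $\hat j_t$ are built from all past predictions, including up to $T-1$ pre-change ones. To avoid this, I would bound the detection delay for a variant in which $\hat j_t$ is computed from $\{g(X_s)\}_{s=k}^{t-1}$ only, restarted at $k$. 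This variant is exactly the stopping rule of Theorem~\ref{theorem:sequential-test}, so its expected delay is $\mathcal{O}(\log(1/(\alpha\Delta_\theta))/\Delta_\theta^2)$ uniformly in $T$ and in the pre-change history. The cleanest route is to define the detector with restarted estimates, which leaves the first part unchanged since that argument only needs predictability of $\hat j_t$. The paper's statement suggests using the global $\hat j_t$, though, so I would also argue directly.

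The main obstacle is this contamination, and the direct argument follows the structure of the proof of Theorem~\ref{theorem:sequential-test}. The log-wealth of the mixture is lower bounded by the best constant $\lambda$ minus an $\mathcal{O}(\log n)$ regret term. The growth then depends on the number of rounds $t\ge T$ with $\hat j_t\ne\theta$. Under the alternative, $\hat j_t=0$ or another label only costs a nonpositive (not negative-drift) factor. Specifically, if $\hat j_t=0$ the factor equals $1$, so there is no loss; if $\hat j_t=m\ne\theta$, the drift is $\lambda(\bp_\theta[0]-\bp_\theta[m])$, which can be negative. The fix is to bound the expected number of such bad rounds uniformly in the worst-case history by the time needed for the post-change counts to overtake the pre-change counts. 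This time scales with $T$, so it is not uniform, and this is exactly why the esssup forces the restart. I would therefore commit to the restarted-estimate version of $W^{(k)}$, noting that it is what makes the $\sup_T\esssup$ bound follow verbatim from Theorem~\ref{theorem:sequential-test}.
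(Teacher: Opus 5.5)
Your proposal is correct, and on the delay bound it is more careful than the paper's own proof.

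\textbf{False-alarm bound.} The paper simply invokes \cite[Theorem 2.4]{shin2022detectors} for $\mathbb{E}_\infty[\tau]\ge 1/\alpha$, calling $M_n$ Shiryaev--Roberts type even though it is a maximum. You instead dominate $M_n$ by the Shiryaev--Roberts sum $\Lambda_n$, show $\Lambda_n-n$ is a supermartingale, and then use optional stopping at $\tau\wedge n$ together with Fatou. This is the mechanism behind the cited result, written out, and it handles the max-versus-sum point cleanly. You are also right that it needs only predictability of the $\hat j_t$'s, so it survives your modification.

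\textbf{Delay bound.} The paper uses the same reduction $(\tau-T)^+\le\tau^{(T+1)}$. It then asserts that $\tau^{(T+1)}$ ``is the same as'' the test of Theorem~\ref{theorem:sequential-test} and is independent of $\calF_T$. That silently ignores that $\hat j_t$ is built from all past predictions, including pre-change ones. Your diagnosis is correct, and you can replace the heuristic ``this time scales with $T$'' by an explicit counterexample:
\begin{itemize}
\item Take $A=\{g(X_t)=0 \text{ for all } t<T\}\in\mathcal{F}^{(0)}_T$, which has probability $\bp_0[0]^{T-1}>0$.
\item On $A$, among the first $t-1$ predictions, label $0$ has count at least $T-1$, while every other label has count at most $t-T$.
\item With ties broken toward $0$, this gives $\hat j_t=0$, so every betting factor equals $1$ and $M_n=1<1/\alpha$, for all $n\le 2T-1$.
\end{itemize}
Hence $\mathbb{E}_{T,\theta}[(\tau-T)^+\mid\mathcal{F}^{(0)}_T]\ge T$ on $A$, and the $\sup_T\esssup$ is infinite for the detector as literally defined. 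A concentration argument on the pre-change counts shows that even the unconditional delay grows linearly in $T$. So restarting the estimates is necessary rather than merely convenient, and it is exactly what makes the paper's one-line reduction valid.

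\textbf{One correction to your own reduction.} The change happens at $T$, so $X_T\sim P_\theta$, and $\mathcal{F}^{(0)}_T$ contains $g(X_T)$. Therefore $W^{(T)}$ is \emph{not} independent of $\mathcal{F}^{(0)}_T$. Its conditional expectation given a worst-case first post-change label is not literally the quantity bounded in Theorem~\ref{theorem:sequential-test}. The fix is to start the restarted process at $k=T+1$, as the paper does:
\begin{itemize}
\item Then $(\tau-T)^+\le\tau^{(T+1)}$, with $\tau^{(T+1)}$ counted as the number of samples used from time $T+1$ on.
\item Under restarted estimates, $\tau^{(T+1)}$ depends only on $g(X_{T+1}),g(X_{T+2}),\ldots$.
\item So its conditional law given $\mathcal{F}^{(0)}_T$ is that of $\tau$ in Theorem~\ref{theorem:sequential-test} under $P_\theta$, uniformly in $T$ and in the pre-change history.
\end{itemize}
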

The proof of Theorem~\ref{theorem:change-detection} is in Appendix~\ref{proof:change-detection}.
Note that in the SCD setting, the expected detection delay generally depends on the ground-truth post-change distribution $P_\theta$. Moreover, when $\alpha$ is sufficiently small, Proposition~\ref{theorem:identification} implies that, at stopping, $\hat{j}_\tau$ identifies the true post-change index with high probability. This identification is practically important since knowing the post-change parameter can help localize the cause of the anomaly.

We now argue that there exist problem instances for which the expected stopping time derived in Theorem~\ref{theorem:change-detection} cannot be improved. To do so, let us recall the lower bound of the Lorden's criterion~\cite[Section IV-B]{veeravalli2014quickest} applied to the classifier's output sequence instead of the raw data: if the post-changed distribution is $P_\theta$ for $\theta\in[L]$, then
\begin{align}
    \sup_{T\in\mathbb{N}}\esssup\mathbb{E}_{T,\theta}[(\tau-T)^{+}|\mathcal{F}^{(0)}_{T}]\in\Omega\lp\frac{\log(1/\alpha)}{D(\bp_\theta\lVert \bp_0)}\rp\quad\text{as }\alpha\rightarrow 0,
\end{align}
where $\bp_\theta=[\bp_\theta[0],\ldots,\bp_\theta[L]]$ and $\bp_\theta[m]:=\mathbb{P}_{X\sim P_\theta}(g(X)=m)$ for all $m,\theta\in\calL$. We prove that there exists a problem instance and a corresponding separable classifier such that the upper bound of the expected delay in Theorem~\ref{theorem:change-detection} cannot be improved asymptotically as $\alpha\rightarrow\infty$. It is sufficient to show that there exists $\{\bp_\nu\}_{\nu\in\calL}$ satisfying $\bp_{\theta}[\theta]>\max_{m\in\calL\setminus\{\theta\}}\bp_{\theta}[m]$ for all $\theta\in\calL$ such that $D(\bp_\theta\lVert \bp_0)\leq\Delta_\theta^2$.
\begin{proposition}[Optimality in some cases]
Let $L=1$ and $\bp_{0}=[1-e^{-1},e^{-1}]$ and $\bp_1=[0.1, 0.9]$, and let $\mathbb{E}_{T,\theta}[\cdot]$ denote the expectation when there is a change from $\bp_0$ to $\bp_1$ at time $T \in \mathbb{N}\cup \{\infty\}$. Then, we have 
    \begin{equation}
        \sup_{T\in\mathbb{N}}\esssup\mathbb{E}_{T,\theta}[(\tau-T)^{+}|\mathcal{F}_{T}]\in\Omega \lp \frac{\log1/\alpha}{\Delta^2_1} \rp\quad\text{as }\alpha\rightarrow0.
    \end{equation}
\end{proposition}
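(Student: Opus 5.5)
The plan is to combine the Lorden-type lower bound recalled just before the statement with a direct numerical comparison between the KL divergence of the classifier output distributions and the squared gap $\Delta_1^2$. As the text preceding the proposition observes, it suffices to check two things for the given instance: that it is separable, and that $D(\bp_1\Vert\bp_0)\le C\,\Delta_1^2$ for some absolute constant $C$.

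Then
\[
\frac{\log(1/\alpha)}{D(\bp_1\Vert\bp_0)} \;\ge\; \frac{\log(1/\alpha)}{C\,\Delta_1^2},
\]
and the recalled bound
\[
\sup_{T}\esssup\mathbb{E}_{T,1}\bigl[(\tau-T)^+\mid\mathcal{F}^{(0)}_T\bigr]\in\Omega\bigl(\log(1/\alpha)/D(\bp_1\Vert\bp_0)\bigr)
\]
gives the claim directly.

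\textbf{Step 1: separability.} Here $L=1$, so $\calL=\{0,1\}$.
\begin{itemize}
\item For label $0$ we need $\bp_0[0]=1-e^{-1}\approx 0.632>e^{-1}\approx 0.368=\bp_0[1]$, which holds.
\item For label $1$ we need $\bp_1[1]=0.9>0.1=\bp_1[0]$, which also holds.
\end{itemize}
So the instance, realized by some pair $P_0,P_1$ and classifier $g$ inducing these pmfs (for example, $\calX=\{0,1\}$ with $g$ the identity), lies in $\calP_{\text{sep}}$. This makes Theorem~\ref{theorem:change-detection} applicable, which gives the matching upper bound $\mathcal{O}(\log(1/(\alpha\Delta_1))/\Delta_1^2)$.

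\textbf{Step 2: compute $\Delta_1$ and the KL divergence.}
\begin{itemize}
\item The gap is $\Delta_1=\bp_1[1]-\bp_1[0]=0.8$, so $\Delta_1^2=0.64$.
\item The divergence is
\[
D(\bp_1\Vert\bp_0)=0.1\log\frac{0.1}{1-e^{-1}}+0.9\log\frac{0.9}{e^{-1}}.
\]
The second term equals $0.9(1+\log 0.9)\approx 0.9(1-0.105)\approx 0.805$. The first term is about $0.1\log(0.158)\approx -0.184$. Hence $D(\bp_1\Vert\bp_0)\approx 0.62<0.64=\Delta_1^2$.
\end{itemize}
This is the intended choice behind $e^{-1}$, which makes the $\log$ of the dominant term clean. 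I expect this numerical inequality to be the only delicate point. The margin is small (about $0.02$), so I would bound each logarithm carefully with explicit decimal enclosures, e.g.\ $\log 0.9\le -0.105$ and $\log(0.1/0.632)\le -1.84$, rather than rely on approximations.

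\textbf{Step 3: conclude.} Since the lower bound recalled before the proposition is a statement about arbitrary stopping times acting on the classifier output sequence $\{g(X_t)\}$ (the standard Lai/Lorden asymptotic lower bound under the false-alarm constraint $\mathbb{E}_\infty[\tau]\ge 1/\alpha$), we obtain the following as $\alpha\to0$:
\[
\sup_T\esssup\mathbb{E}_{T,1}\bigl[(\tau-T)^+\mid\mathcal{F}_T\bigr]\;\ge\;(1-o(1))\,\frac{\log(1/\alpha)}{D(\bp_1\Vert\bp_0)}\;\ge\;(1-o(1))\,\frac{\log(1/\alpha)}{\Delta_1^2}.
\]
Together with Theorem~\ref{theorem:change-detection}, and noting that $\log(1/(\alpha\Delta_1))\sim\log(1/\alpha)$ for fixed $\Delta_1$, this shows that the upper bound is order-optimal on this instance.

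One subtlety to address is conditioning. The statement conditions on $\mathcal{F}_T$ while the recalled bound uses $\mathcal{F}^{(0)}_T$. I would note that the stopping time is a functional of $\{g(X_t)\}$ only, so when the instance is realized with $g$ the identity on a binary alphabet, the two filtrations coincide. This removes the discrepancy.
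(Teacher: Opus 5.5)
Your argument is correct and has the same skeleton as the paper's. You apply the Lorden-type bound $\Omega(\log(1/\alpha)/D(\bp_1\Vert\bp_0))$ to the classifier-output stream, which is legitimate because the detector is a stopping time of $\{g(X_t)\}$ with $\mathbb{E}_\infty[\tau]\ge 1/\alpha$. You then compare $D(\bp_1\Vert\bp_0)$ with $\Delta_1^2$.

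The two routes differ only in that comparison. The paper appeals to $D\le\chi^2$. For this instance, however, $\chi^2(\bp_1\Vert\bp_0)=(0.9-e^{-1})^2e^2/(e-1)\approx 1.22$, which exceeds $0.64=\Delta_1^2$. That route therefore only yields $D(\bp_1\Vert\bp_0)\lesssim 1.9\,\Delta_1^2$. This is enough for the $\Omega$ claim, but not for the literal condition $D(\bp_\theta\Vert\bp_0)\le\Delta_\theta^2$ announced just before the proposition.

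Your direct evaluation does certify that condition. Your enclosures give $D(\bp_1\Vert\bp_0)\le 0.8055-0.184=0.6215<0.64$, and hence the sharper form with leading constant $1$. The cost is instance-specific arithmetic in place of a generic inequality.

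For a fixed instance, $\Delta_1$ is just a constant. The order statement by itself therefore needs only $D(\bp_1\Vert\bp_0)<\infty$, and the comparison with $\Delta_1^2$ matters only for constants.

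Your remark on conditioning is fine, and it holds without taking $g$ to be the identity. $(\tau-T)^+$ is a function of $g(X_1),g(X_2),\dots$, and the outputs after time $T$ are independent of $\mathcal{F}_T$. Conditioning on $\mathcal{F}_T$ or on $\mathcal{F}^{(0)}_T$ therefore gives the same conditional expectation almost surely.
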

This result, which is a direct consequence of the fact that relative entropy is bounded by the chi-squared divergence,  implies that there exist problem instances for which  the expected stopping time bound for our proposed stopping rule derived in  Theorem~\ref{theorem:sequential-test} is also asymptotically optimal.

\section{Experiments}
\label{sec:experiments}

After deriving the theoretical results in Section~\ref{sec:main-results}~and~\ref{sec:extension}, we now turn to empirical experiments utilizing both synthetic and real-world data to support our results. In particular, we present the following experiments:
\begin{enumerate}
    \item In Section~\ref{sec:verify_thm_sht}, Theorem~\ref{theorem:sequential-test} is verified by some powerful classifiers, including a simple Feedforward Neural Network (FNN) and a common VGG16~\cite{Simonyan2015}. We consider synthetic data generated by Gaussian distributions and real-world data obtained from CIFAR-10. Moreover, we demonstrate that our proposed method can identify the true data distribution at stopping, verifying Proposition~\ref{theorem:identification}.

    \item In Section~\ref{sec:exp_dist_shift}, we turn to verifying the robustness described in Proposition~\ref{prop:robust_result}, showing that given a separable classifier for a training tuple (i.e., given (empirical) $\{\Delta_{\theta,m}\}_{\theta\in\calL,m\in\calL\setminus\{\theta\}}$), we can define the mismatch strength $\epsilon$ according to Proposition~\ref{prop:robust_result} and demonstrate the empirical average stopping time matches the trend of the theoretical result. %
    
    \item In Section~\ref{sec:verify_thm_scd}, we apply our proposed  change-point detection scheme on synthetic data streams, and verify that the empirical average delay matches the theoretical result. 
    
    \item  In Section~\ref{sec:exp_mixture}, we illustrate how our framework can easily aggregate information from several classifiers. In particular, we  consider a setting with two classifiers and show that taking an average mixture of their betting-wealth processes leads to a shorter average stopping time than using only a single classifier. This inspires one to study adaptive weighting schemes.
\end{enumerate}
\begin{remark}
    Our empirical evaluation focuses specifically on verifying the proposed stopping rule, especially the expected stopping time. Therefore, for training the classifier, we provide only some parameters used in neural networks to get the separable classifier under the specified ground-truth distribution tuple. Also, we do not simulate the level-$\alpha$ and power-one properties since they have been shown in many \emph{testing-by-betting} works, e.g., \cite{Shekhar2024}, that the test martingale indeed can achieve those properties empirically.
\end{remark}

\subsection{Verification of Theorem~\ref{theorem:sequential-test} and Proposition~\ref{theorem:identification}}\label{sec:verify_thm_sht}
In this experiment, we focus on the scenario when the alternative hypothesis ($H_1$) is true and $L=2$. We demonstrate that the \emph{trend} of the empirically expected stopping time is as shown in Theorem~\ref{theorem:sequential-test}. Moreover, we show that the indicator $\hat{j}_t$ in our proposed test can identify the true alternative distribution index. 
\paragraph{Environments:} We consider the following two types of data to provide the above results: 
\begin{itemize}
    \item \textbf{Case 1: (Gaussian distribution)} A synthetic Gaussian dataset paired with a lightweight fully connected neural network classifier (with two hidden layers of sizes 64 and 32), implemented using the function \texttt{MLPClassifier} (Multi-layer perception classifier) in Python~\cite{Buitinck2013}. The detailed hyperparameter settings are provided in our \href{https://github.com/chiayuh-umich/ClassifierBasedSHT}{GitHub} (see \texttt{expt\_NormalModel.py}).

    \item \textbf{Case 2: (CIFAR-10)} A subset of CIFAR-10 images~\cite{Krizhevsky2009} paired with a convolutional neural network classifier, implemented using VGG16~\cite{Simonyan2015}. The detailed architecture and training settings are provided in our GitHub repository (see \texttt{expt\_CIFAR10.py}).
\end{itemize}

Then, for these two cases, we illustrate their data-generating distribution tuples and the corresponding separable classifiers. Note that there is no training-testing mismatch in this section.

For Case 1, we use $10$-dimensional Gaussian vectors with a common covariance matrix to define $\mathbf{P}$: $P_0=\mathcal{N}(\mu_0,\Sigma)$, $P_1=\mathcal{N}(\mu_1,\Sigma)$, and $P_2=\mathcal{N}(\mu_2,\Sigma)$,
where $\Sigma$ is the $10\times 10$ identity matrix and we set 
\begin{align}
    &\mu_0 = [0,\, 0,\, 0,\, 0,\, 0,\, 0.9,\, 0.8,\, 0.9,\, 0.8,\, 0.9].\\
    &\mu_1 = [0,\, 0,\, 0.1,\, 0.1,\, 0.1,\, 0.7,\, 0.8,\, 0.9,\, 0.9,\, 0.9],\\
    &\mu_2 = [0,\, 0,\, 0,\, 0,\, 0,\, 1,\, 1,\, 1,\, 1,\, 1].
\end{align}
The corresponding separable classifier's confusion matrix is shown as Table~\ref{tab:exp1_normal}. For Case 2, we use the first three classes of CIFAR-10, namely $P_0$ (Airplane), $P_1$ (Automobile), and $P_2$ (Bird). The confusion matrix of the separable classifier is described in Table~\ref{tab:exp1_cifar}. Note that since we can find a corresponding separable classifier for those considered ground-truth tuples, those tuples are in $\mathcal{P}_{\text{sep}}$.
\begin{table}[H]
\centering
\caption{Confusion matrices (in probability) of classifiers. Each entry of the table is a numerical version of $\mathbb{P}_{X\sim P_\theta}(g(X)=m)$, for any $\theta,m\in\calL$, evaluated by the sampling data.}
\label{tab:two_cm}
\begin{subtable}[t]{0.48\linewidth}
\centering
\caption{Case 1: The MLP Classifier is trained over $10000$ samples uniformly drawn from the ground-truth tuple ($N\cong10000/3$).}
\label{tab:exp1_normal}
\renewcommand{\arraystretch}{1.15}
\begin{tabular}{c ccc}
\toprule
 & $g(X)=0$ & $g(X)=1$ & $g(X)=2$ \\
\midrule
$\theta=0$ & 0.483609 & 0.243609 & 0.272782 \\
$\theta=1$ & 0.186343 & 0.559332 & 0.254325 \\
$\theta=2$ & 0.200000 & 0.244970 & 0.555030 \\
\bottomrule
\end{tabular}
\end{subtable}
\hfill
\begin{subtable}[t]{0.48\linewidth}
\centering
\caption{Case 2: The VGG16 neural network is trained over $20000$ samples uniformly drawn from the ground-truth tuple ($N\cong20000/3$).}
\label{tab:exp1_cifar}
\renewcommand{\arraystretch}{1.15}
\begin{tabular}{c ccc}
\toprule
 & $g(X)=0$ & $g(X)=1$ & $g(X)=2$ \\
\midrule
$\theta=0$ & 0.867 & 0.065 & 0.068\\
$\theta=1$ & 0.062 & 0.903 & 0.035 \\
$\theta=2$ & 0.14 & 0.048 & 0.812 \\
\bottomrule
\end{tabular}
\end{subtable}
\end{table}

\paragraph{Average stopping time:}
Then, we simulate the average stopping time of our proposed stopping rule and verify the expected stopping time in Theorem~\ref{theorem:sequential-test}. We assume that the alternative $\theta=2$ in both cases. The corresponding results are shown in Figure~\ref{fig:exp1}. 
Consequently, the scaling trend (orange dotted curve) predicted by our theory matches the empirical results well in both Case~1 (Figure~\ref{fig:exp1_normal}) and Case~2 (Figure~\ref{fig:exp1_cifar}).

\begin{figure}[t]
    \centering
    \begin{subfigure}[t]{0.48\linewidth}
        \centering
        \includegraphics[width=\linewidth]{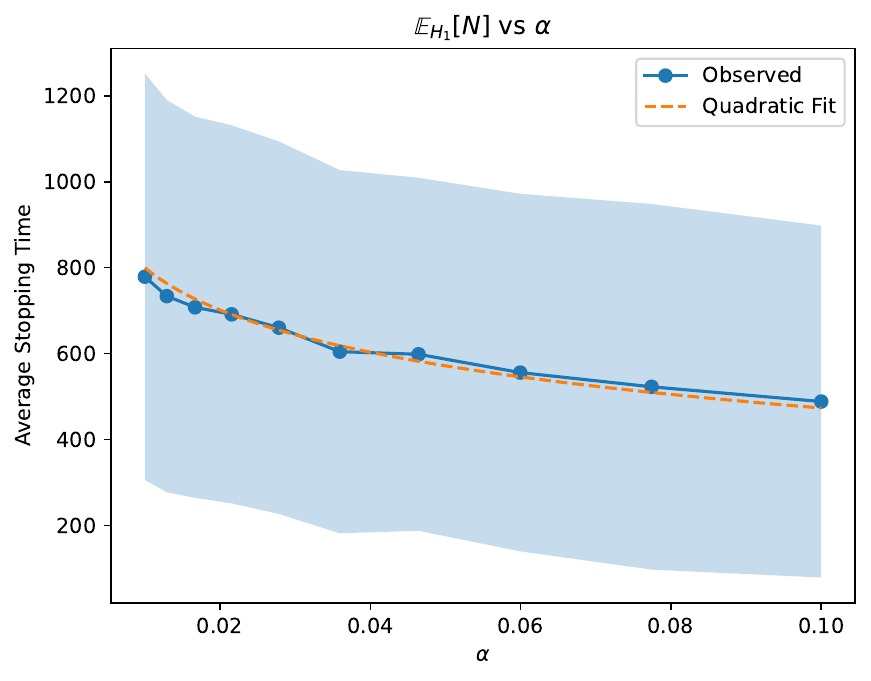}
        \caption{Case 1 with $\Delta_2\cong0.355$}
        \label{fig:exp1_normal}
    \end{subfigure}
    \hfill
    \begin{subfigure}[t]{0.48\linewidth}
        \centering
        \includegraphics[width=\linewidth]{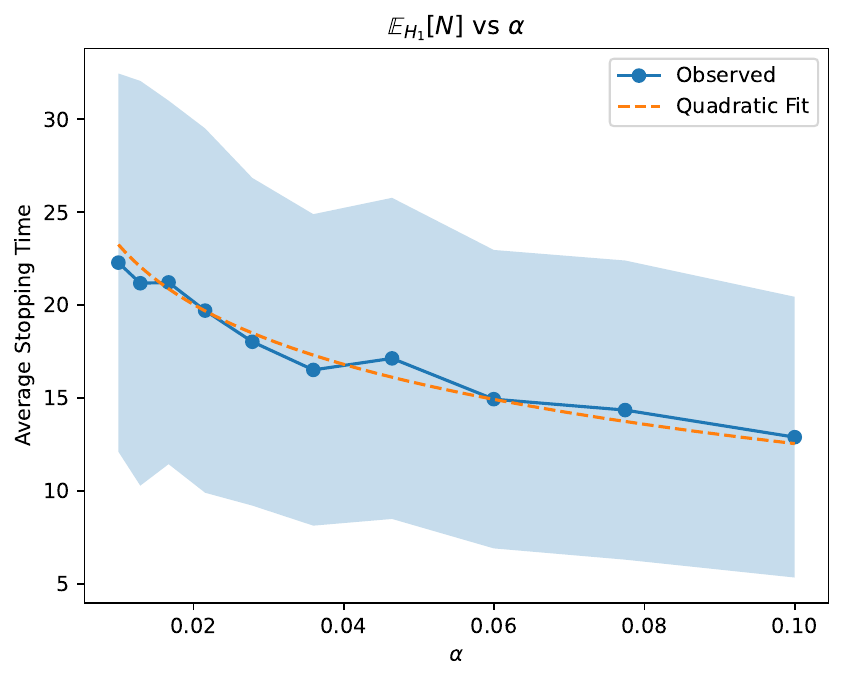}
        \caption{Case 2 with $\Delta_2=0.672$}
        \label{fig:exp1_cifar}
    \end{subfigure}
    \caption{Average expected stopping time vs. desired level-$\alpha$ values under $\theta=2$: In both experiments, we consider $10$ $\alpha$-levels, denoted by $\alpha[1],\ldots,\alpha[10]$. For each $\alpha[i]$, we run $300$ independent trials, where each trial corresponds to one run of the sequential test at level $\alpha[i]$. The observed average stopping time (over $300$ trials) of each $\alpha[i]$ is represented by the blue dots. The curve labeled as ``Quadratic Fit'' is the best match for the empirical results through scaling the constant term of the bound of the theoretical expected stopping time. Details of the quadratic fit is described in Remark~\ref{remark:quadratic_fit}.}
    \label{fig:exp1}
\end{figure}

\begin{remark}[Quadratic fit]\label{remark:quadratic_fit}
For the theoretical curve, we focus on the \emph{scaling trend} of the expected stopping time implied by Theorem~\ref{theorem:sequential-test}, which is named as ``Quadratic Fit'' in this paper. To facilitate a trend-level comparison with the empirical averages, we introduce a scaling constant
\[
c \;:=\; \mathrm{Avg}_{i\in[10]}\left\{\frac{\mathrm{avg\_stopping\_time}[i]\cdot\Delta^2_\theta}{\log\!\bigl(1/(\alpha[i]\cdot\Delta_\theta)\bigr)}\right\},
\]
where $\mathrm{avg\_stopping\_time}[i]$ denotes the empirical average stopping time over the $300$ trials for each $\alpha[i]$ and  $\mathrm{Avg}_{i\in[10]}\{\cdot\}$ denotes the average over elements $i=1,\ldots,10$. We then evaluate the scaled theoretical trend on a finer grid of $1000$ values $\{\tilde{\alpha}[1],\ldots,\tilde{\alpha}[1000]\}$ such that $\{\alpha[1],\ldots,\alpha[10]\}\subseteq\{\tilde{\alpha}[1],\ldots,\tilde{\alpha}[1000]\}$. The ``Quadratic Fit'' curve is given by $\left\{\left(\tilde{\alpha}[j],\; c\cdot \frac{\log(1/\tilde{\alpha}[j])}{\Delta^2_\theta}\right)\right\}_{j=1}^{1000}$.
\end{remark}

\paragraph{Identification:}
We now empirically study the behavior of $\hat{j}_\tau$, where $\tau$ is the stopping time. We show that there is a tradeoff between $\alpha$ and the identification accuracy, emphasizing our Corollary~\ref{cor:indication}. The result for Case~1~(Gaussian) is plotted in Figure~\ref{fig:exp1_normal_hat_j}. In the figure, we introduce the notation $\texttt{ratio[k]}(\alpha[i]):=\frac{\sum_{l=1}^{300}\mathbbm{1}\{\hat{j}^{(l)}_{\tau}=k\}}{300}$ which is the empirical frequency of the output of the indicator averaged over all $300$ trials under $\alpha[i]$ and $\hat{j}^{(l)}_{\tau}$ is the output of $\hat{j}_\tau$ in $l$-th trial. For Case~2~(CIFAR-10), we observe that $\texttt{ratio[2]}(\alpha[i])=1$ for all $i\in\{1,\ldots,10\}$. Hence, we do not plot the corresponding curve for Case 2. In contrast, for Case~1 we occasionally observe $\texttt{ratio[1]}[\alpha[i]]>0$ for some $i\in[10]$ even when the alternative index $\theta=2$. In addition, such a misclassification ratio increases as $\alpha$ becomes larger. The reason is that, according to our stopping rule, a larger $\alpha$ makes earlier stopping, so our test may accumulate sufficient evidence to reject the null (while satisfying level-$\alpha$ property) but not enough evidence to reliably distinguish between the two alternative classes. This observation coincides with Proposition~\ref{theorem:identification} and Corollary~\ref{cor:indication}.

\begin{figure}[H]
    \centering
    \includegraphics[width=0.48\linewidth]{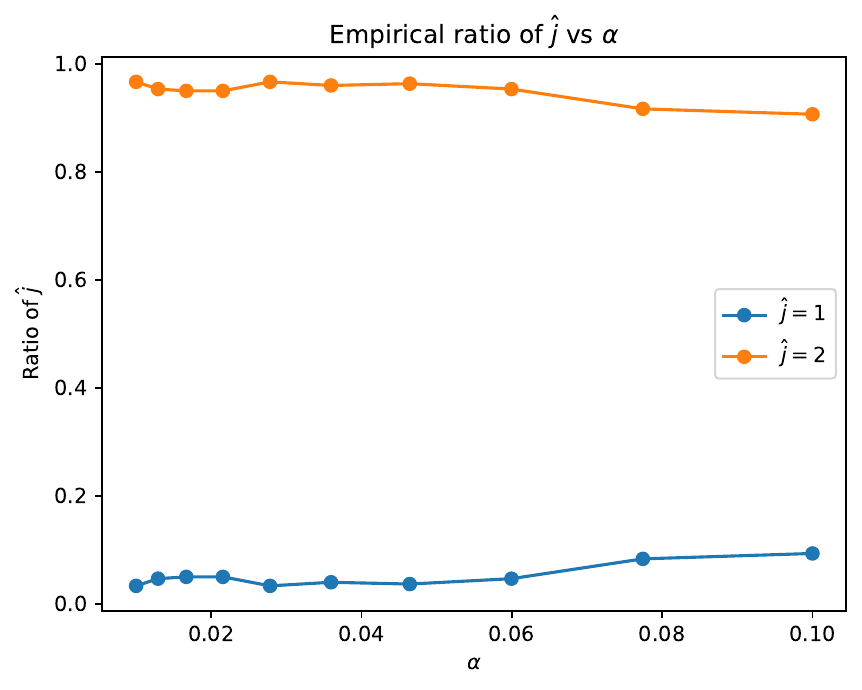}
    \caption{The empirical ratio of $\hat{j}_\tau$ vs. $\alpha$ for Case 1 under $\theta=2$. We also consider the $10$ $\alpha$-values used in simulating the average stopping time earlier and run $300$ trials at each $\alpha$-value. The blue and orange dots depict $\{(\alpha[i],\texttt{ratio[1]}(\alpha[i]))\}_{i=1}^{10}$ and $\{(\alpha[i],\texttt{ratio[2]}(\alpha[i]))\}_{i=1}^{10}$, respectively, %
    Note that we do not plot $\texttt{ratio[0]}(\alpha[i])$ since it is zero for any $i\in\{1,\ldots,10\}$.}
    \label{fig:exp1_normal_hat_j}
\end{figure}

\begin{remark}\label{remark:theoretical_result_plot}
    In our empirical results, we consider \emph{quadratic fit} and focus on the scaling trend. However, in the proof of Theorem~\ref{theorem:sequential-test}, we can obtain the exact upper bound of the expected stopping time, $\mathbb{E}_1[\tau]\leq \max\{N_0,N_1,\frac{32\log(1/\alpha)}{\Delta^2_1}\}+\frac{(L+2)\pi^2}{6}$, where
\begin{align}
    &N_0 \defined \inf \{m \geq 1: \sqrt{ (8 \log (2n^3))/n} \leq \Delta_1/2\quad\forall\,n\geq m\},\\
    &N_1 \defined \inf \lbr m \geq 1:  \frac{2 \log n}{n} \leq \frac{\Delta^2_1}{32}\quad\forall\,n\geq m \rbr.
\end{align}
One can also plot this upper bound as the theoretical result, but it is hard to see the trend under the situation that $\alpha$ is not small enough. 
\end{remark}

\subsection{Verification of Proposition~\ref{prop:robust_result}}\label{sec:exp_dist_shift}
In this section, we demonstrate the \emph{robustness} of our proposed test to small enough mismatches between training and test distributions, as implied by  Proposition~\ref{prop:robust_result}. Specifically, given the (empirical) separability gaps $\{\Delta_{\theta,m}\}_{\theta\in\calL,m\in\calL\setminus\{\theta\}}$ (recall that $\Delta_{\theta,m}=\bp_\theta[\theta]-\bp_\theta[m]$) (equivalently, given a separable classifier $g$ for the training tuple $\mathbf{P}$), we design the mismatch strength $\epsilon$ following Proposition~\ref{prop:robust_result}, then choosing one mismatched tuple $\tilde{\mathbf{P}}\in\calE_\epsilon$ to demonstrate that $0<\tilde{\Delta}_\theta<\Delta_\theta+\sqrt{2\epsilon}$ for any $\theta\in[L]$ and the induced  averaged stopping time also aligns the theoretical result.

\begin{table}[H]
\centering
\caption{Confusion matrices (in probability) of a fixed MLP classifier.}
\label{tab:exp2_toy_two_cm}
\begin{subtable}[t]{0.48\linewidth}
\centering
\caption{Evaluated on the training data (drawn from $\mathbf{P}$). Each entry of the table is a numerical version of $\mathbb{P}_{X\sim P_\theta}(g(X)=m)$, for any $\theta,m\in\calL$}
\label{tab:exp2_toy_training}
\renewcommand{\arraystretch}{1.15}
\begin{tabular}{c ccc}
\toprule
 & $g(X)=0$ & $g(X)=1$ \\
\midrule
$\theta=0$ & 0.948529 & 0.051471 \\
$\theta=1$ & 0.012195 & 0.987805  \\
\bottomrule
\end{tabular}
\end{subtable}
\hspace{0.0002\linewidth}
\begin{subtable}[t]{0.48\linewidth}
\centering
\caption{Evaluated on the testing data (drawn from $\tilde{\mathbf{P}}$). Each entry of the table is a numerical version of $\mathbb{P}_{X\sim \tilde P_\theta}(g(X)=m)$, for any $\theta,m\in\calL$.}
\label{tab:exp2_toy_testing}
\renewcommand{\arraystretch}{1.15}
\begin{tabular}{c ccc}
\toprule
 & $g(X)=0$ & $g(X)=1$ \\
\midrule
$\theta=0$ & 0.701987 & 0.298013 \\
$\theta=1$ & 0.087248 & 0.912752 \\
\bottomrule
\end{tabular}
\end{subtable}
\end{table}

\paragraph{Environments:}
Throughout this experiment, we consider the case where $L=1$. For the training tuple, we define $\mathbf{P}=\{P_0,P_1\}$, which consists of Gaussian distributions $P_0=\mathcal{N}(\mu_0, \Sigma)$ and $P_1=\mathcal{N}(\mu_1, \Sigma)$, with  $\mu_0=[-1.5, -1.0]$, $\mu_1=[1.5, 1.0]$, and $\Sigma$ is the identity matrix. Next, we train an MLP classifier with two hidden layers of sizes 10 and 5. The model is trained on $300$ samples drawn equally from $P_0$ and $P_1$ ($150$ samples each), yielding the confusion matrix shown in Table~\ref{tab:exp2_toy_training}. Based on this confusion matrix, we compute the separability gap as $\Delta_1=\bp_1[1]-\bp_0[0]\approx0.97$. The square of the minimum separability gap is therefore $\frac{1}{2}(\min\{\Delta_{1,0}, \Delta_{0,1}\})^2\approx0.4608$. Consequently, following Proposition~\ref{prop:robust_result}, we set $\epsilon=0.4608$ and use it to define the set of mismatched tuples $\calE_\epsilon$ as in~\eqref{eq:KL-mismatch-set}.

Following this, we select a specific mismatched tuple $\tilde{\mathbf{P}}=\{\tilde P_0, \tilde P_1\}\in\calE_\epsilon$ as the testing distribution to verify Proposition~\ref{prop:robust_result}. Let $\tilde{P}_0=\mathcal{N}(\tilde{\mu}_0, I)$ and $\tilde{P}_1=\mathcal{N}(\tilde{\mu}_1, I)$, where $\tilde{\mu}_0=\mu_0+\delta_0$, $\tilde{\mu}_1=\mu_1+\delta_1$, with $\delta_0=[1, 0.5]$ and $\delta_1=[-0.5, -0.5]$. The mismatch between the training and testing distributions is illustrated in Figure~\ref{fig:exp2_toy_vis} using empirical samples.  Since the testing samples exhibit greater overlap than the corresponding training samples, it is more challenging for  the MLP classifier to distinguish between $\tilde{P}_0$ and $\tilde{P}_1$.

Next, we verify that $\tilde{\mathbf{P}}$ belongs to $\calE_\epsilon$, which is defined by the specified $\epsilon$ and the trained MLP classifier. The confusion matrix for the classifier evaluated on $\tilde{\mathbf{P}}$ is provided in Table~\ref{tab:exp2_toy_testing}, where one can note that the power of separability of the classifier under $\tilde{\mathbf{P}}$ is weaker than that under the training distribution $\mathbf{P}$. We then compute $\max\{D(\bp_0\|\tilde\bp_0), D(\bp_1\|\tilde\bp_1)\}\approx0.1951 < \epsilon$ (where the KL-divergence uses the natural logarithm), confirming that $\tilde{\mathbf{P}}\in\calE_\epsilon$. Moreover, Table~\ref{tab:exp2_toy_testing} also shows that $\tilde\bp_\theta[\theta]>\tilde\bp_\theta[0]$ for any $\theta\in\calL$. This indicates that the MLP classifier trained on the original $\mathbf{P}$ remains separable for the shifted $\tilde{\mathbf{P}}$. Finally, we observe $\tilde\Delta_1=\tilde\bp_1[1]-\tilde\bp_1[0]\approx0.83$, which successfully verifies the theoretical bound $0<\tilde\Delta_1\leq \Delta_1 + \sqrt{2\epsilon}$.

\begin{figure}[t]
    \centering
    \includegraphics[width=0.48\linewidth]{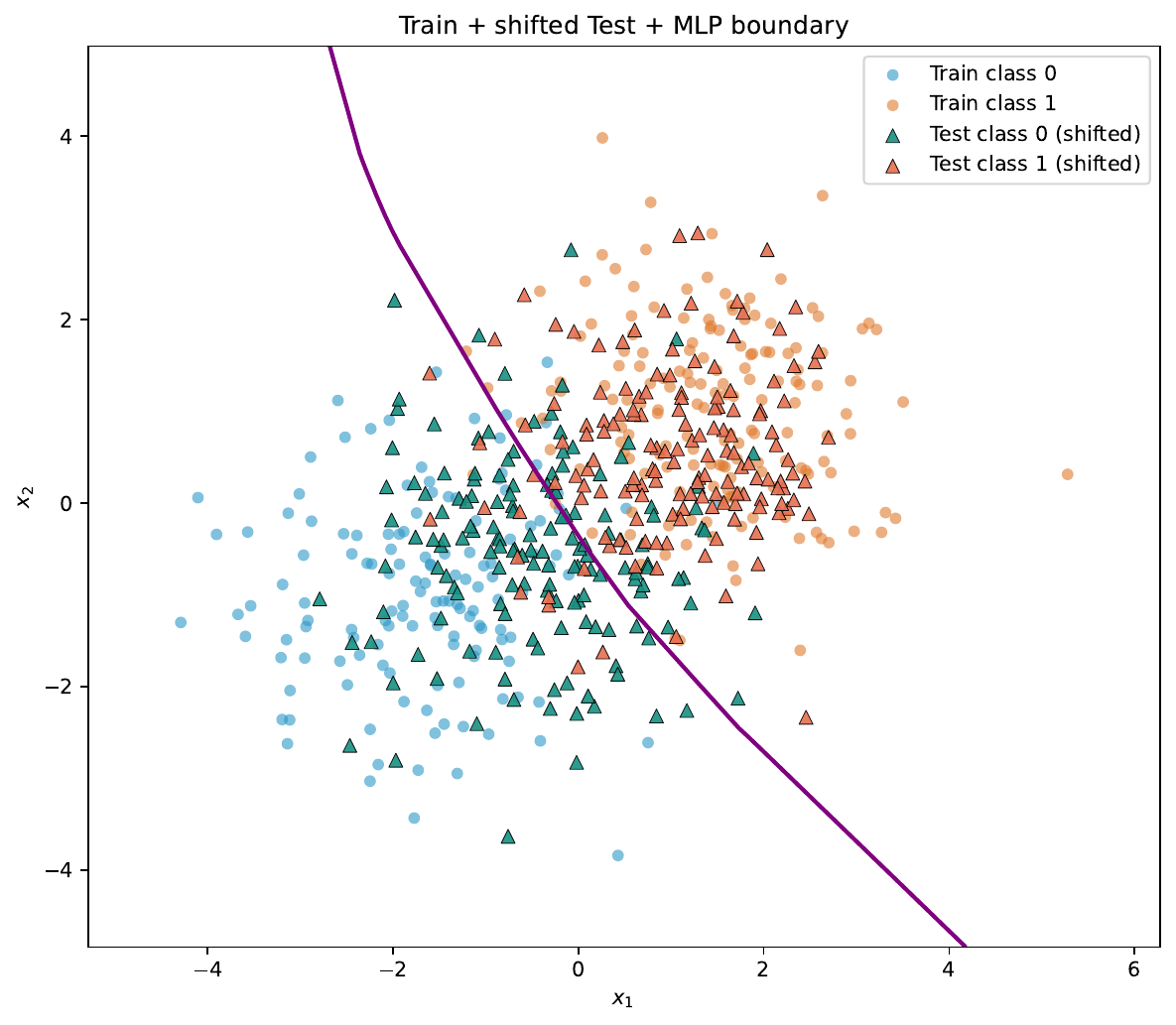}
    \caption{We visualize the distribution shift in this figure. The orange and blue points represent training samples drawn from $P_0$ and $P_1$, respectively, while the red and green rectangles represent testing samples drawn from the mean-shifted distributions $\tilde{P}_0$ and $\tilde{P}_1$. The purple curve shows the decision boundary of the MLP classifier trained on  samples drawn from $(P_0, P_1)$.}
    \label{fig:exp2_toy_vis}
\end{figure}

\paragraph{Average stopping time:} Following the environment setup, we verify the expected stopping time shown in Proposition~\ref{prop:robust_result} under the chosen $\tilde{\mathbf{P}}$ and the MLP classifier trained on the training tuple $\mathbf{P}$. The results are plotted in Figure~\ref{fig:exp2_toy_stopping_time_comp}, where we compare the numerical expected stopping times of the non-shifted and shifted testing data distributions. Specifically, the trend of the theoretical expected stopping time matches the empirical average (following the argument of the quadratic fit in Section~\ref{sec:verify_thm_sht}). However, since the shifted distributions share more overlap around the classifier's decision boundary (see Figure~\ref{fig:exp2_toy_vis}), the obtained average stopping time is strictly larger than its non-shifted counterpart. The detailed implementation for this subsection is provided in \texttt{expt\_NormalModel\_visualization.py} in our GitHub repository.

\begin{figure}[H]
    \centering
    \begin{subfigure}[t]{0.48\linewidth}
        \centering
        \includegraphics[width=\linewidth]{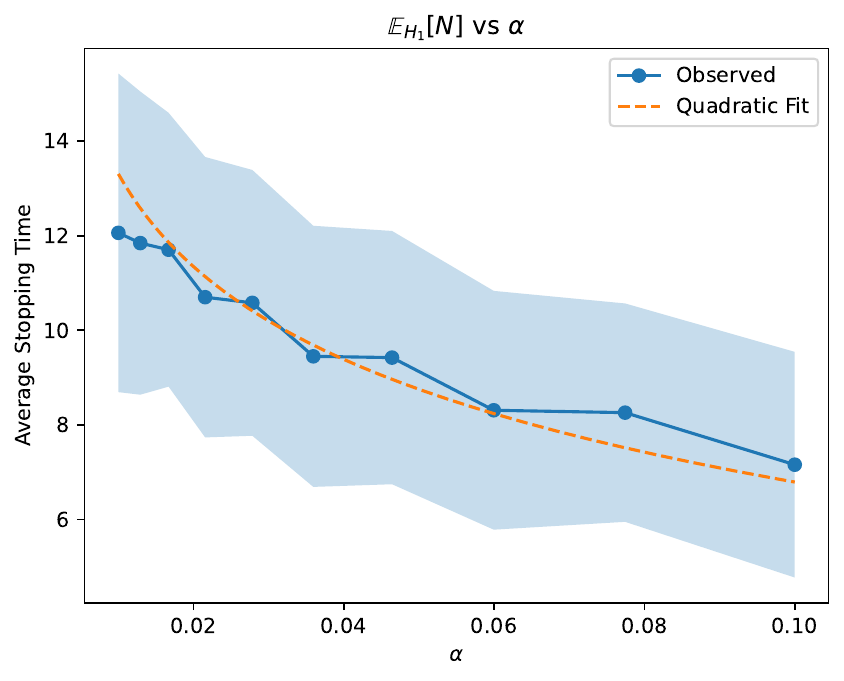}
        \caption{Testing data are drawn from the non-shifted tuple $\mathbf{P}$, which is exactly the training data distributions.}
        \label{fig:exp2_toy_stop_time}
    \end{subfigure}
    \hfill
    \begin{subfigure}[t]{0.48\linewidth}
        \centering
        \includegraphics[width=\linewidth]{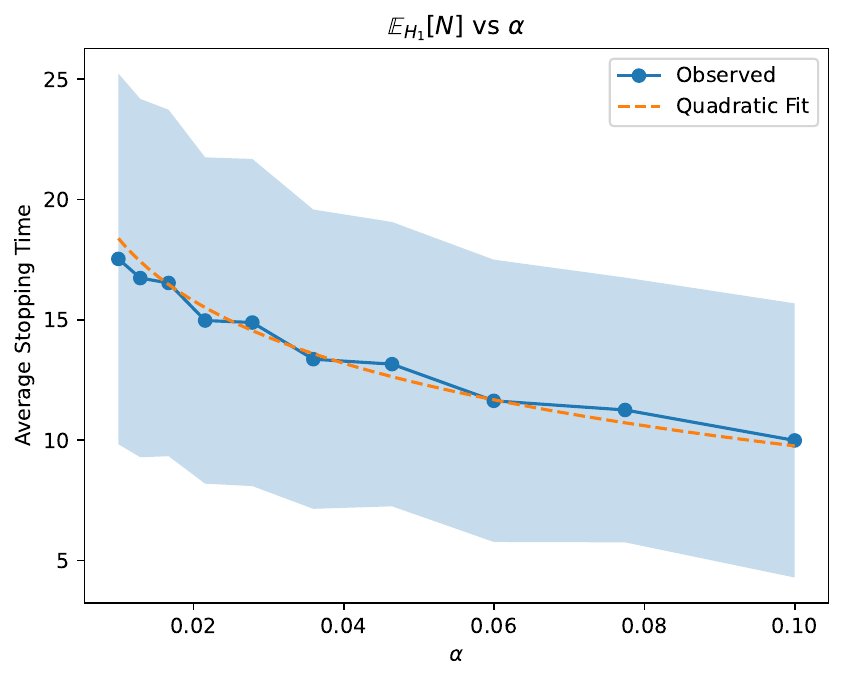}
        \caption{Testing data is drawn from the mean-shifted tuple $\tilde{\mathbf{P}}$, which is shifted from the training data distributions.}
        \label{fig:exp2_toy_shifted_stop_time}
    \end{subfigure}
    \caption{Average stopping time comparison, where both utilize the MLP classifier trained on the non-shifted tuple $\mathbf{P}$. We plot these two figures using the same $\alpha$ values and quadratic fitting method as in Figure~\ref{fig:exp1}. However, the quadratic fit in Figure~\ref{fig:exp2_toy_stop_time} represents the theoretical scaling trend derived from Table~\ref{tab:exp2_toy_training}, whereas its counterpart in Figure~\ref{fig:exp2_toy_shifted_stop_time} is based on Table~\ref{tab:exp2_toy_testing}.}
    \label{fig:exp2_toy_stopping_time_comp}
\end{figure}

\subsection{Verification of Theorem~\ref{theorem:change-detection}}\label{sec:verify_thm_scd}
In this section, we demonstrate the empirical results of our proposed SCD test from Section~\ref{sec:extension_scd}. For the data-generating model, the pre-change and post-change distributions are, respectively, $P_0$ and $P_1$, as defined in Section~\ref{sec:exp_dist_shift}. In this change-point detection scheme, the observed samples are i.i.d. drawn from $P_0$ for time steps prior to a change-point (denoted as $T$). From time step $T$ onward, the data distribution becomes $P_1$. In the following experiment, we set $T=10$.
Moreover, for the classifier, we also use an MLP classifier with two hidden layers of sizes 10 and 5 trained on $10000$ samples uniformly drawn from $P_0$ and $P_1$ ($N=5000$). The confusion matrix is shown in Table~\ref{tab:exp4_Gaussian}. Then, the empirical expected detection delay is shown in Figure~\ref{fig:exp4_delat_Gaussian}. We can notice that the empirical result matches the scaling trend of the expected delay in Theorem~\ref{theorem:change-detection}.

\begin{table}[H]
\centering
\caption{Confusion matrix (in probability) of the classifier. Each entry of the table is a numerical version of $\mathbb{P}_{X\sim P_\theta}(g(X)=m)$, for any $\theta,m\in\calL$.}
\label{tab:exp4_Gaussian}
\begin{tabular}{c cc}
\toprule
& {$g(X)=0$} & {$g(X)=1$} \\
\midrule
$\theta=0$ & 0.933413 & 0.066587\\
$\theta=1$ & 0.063393 & 0.936607 \\
\bottomrule
\end{tabular}
\end{table}

\begin{figure}[htb]
\centering
\includegraphics[width=0.48\linewidth]{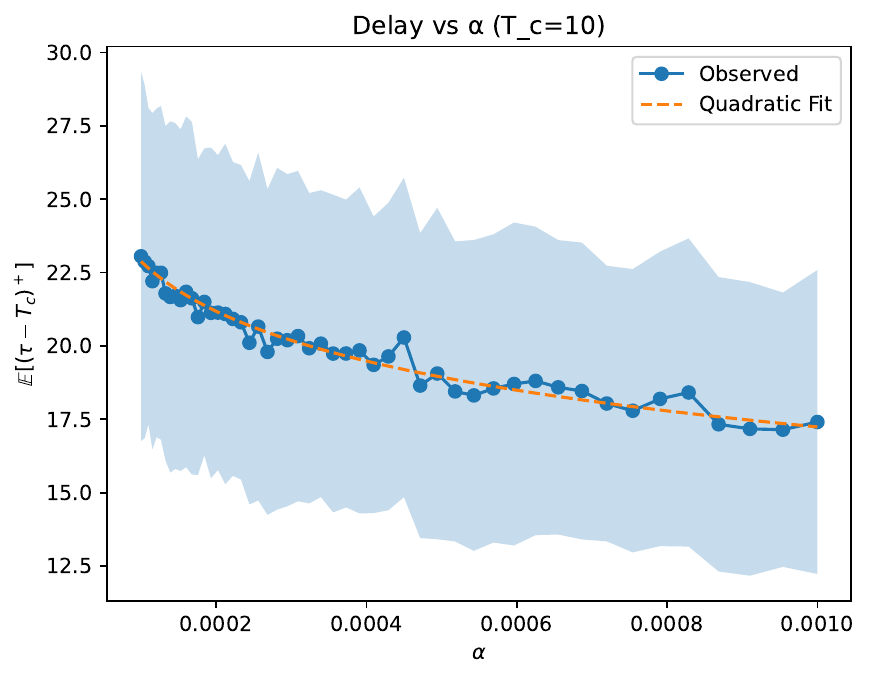}
\caption{Average expected delay vs. $\alpha$. We consider $\alpha$ values ranging from $10^{-4}$ to $10^{-3}$, using \texttt{numpy geomspace} to generate $50$ logarithmically spaced levels. For each $\alpha$, we run $500$ independent trials and plot the average delay (blue dots). The orange dashed curve shows the corresponding quadratic fit, computed in the same manner as in Section~\ref{sec:verify_thm_sht}, with the expected stopping time replaced by the expected detection delay. Note that $T_c$ in the figure corresponds to the change point $T$ used in this paper.}
\label{fig:exp4_delat_Gaussian}
\end{figure}

\begin{remark}
    We do not show empirical results for the expected stopping time under the no-change regime ($T=\infty$). This case follows as a straightforward consequence of the analysis in~\cite{shin2022detectors}, and obtaining a meaningful simulation ($\alpha$ is small) would require long runs due to the rarity of false alarms at small $\alpha$.
\end{remark}

\subsection{Mixture of Classifiers}\label{sec:exp_mixture}
Recall that in Section~\ref{sec:main-results}, we mentioned that the expected stopping time is highly related to the separability gap $\Delta_\theta$ for $\theta\in\calL$, which is the separability of the classifier under the ground truth distribution tuple $\mathbf{P}\in\mathcal{P}_{\text{sep}}$. This motivates us to consider using \emph{multiple} classifiers instead of just one. This modification may introduce the gain of heterogeneity. In this section, we provide empirical evidence to demonstrate that leveraging multiple classifiers benefits the expected stopping time.

The data distribution tuple and all other experimental parameters (including the number of training samples and the number of trials, etc.) are the same as the Gaussian case in Section~\ref{sec:verify_thm_sht}. The difference is that we consider two MLP classifiers, both retrained for this experiment. The first classifier, denoted by $g_1$, is trained with two hidden layers of sizes 64 and 32. The second classifier, denoted by $g_2$, is a \emph{stronger} model trained with two hidden layers of sizes 128 and 64. The confusion matrices of $g_1$ and $g_2$ are shown in Table~\ref{tab:exp3_two_cm}. One can notice that $g_2$ achieves a higher probability $P_{\theta}\!\bigl(g_2(X)=\theta\bigr)$ for every $\theta\in\calL$ than $g_1$ does. Moreover, if we define $\Delta_\theta^{g_i}:=\mathbb{P}_{X\sim\theta}(g_i(X)=\theta)-\max_{m\in\calL\setminus\{\theta\}}\mathbb{P}_{X\sim\theta}(g_i(X)=m)$ for any $\theta\in\calL$, then according to the confusion matrices, we get $\Delta_\theta^{g_2}>\Delta_\theta^{g_1}$ for all $\theta\in\calL$. This indicates that $g_2$ has a stronger separability under the same ground truth distribution tuple. %

\begin{table}[H]
\centering
\caption{Confusion matrices (in probability) of classifiers.}
\label{tab:exp3_two_cm}
\begin{subtable}[t]{0.48\linewidth}
\centering
\caption{Classifier $g_2$. Each entry of the table is a numerical version of $\mathbb{P}_{X\sim P_\theta}(g_2(X)=m)$, for any $\theta,m\in\calL$}
\label{tab:exp3_g2}
\renewcommand{\arraystretch}{1.15}
\begin{tabular}{c ccc}
\toprule
 & $g_2(X)=0$ & $g_2(X)=1$ & $g_2(X)=2$ \\
\midrule
$\theta=0$ & 0.635678 & 0.132508 & 0.231814 \\
$\theta=1$ & 0.167847 & 0.594985 & 0.237168 \\
$\theta=2$ & 0.149833 & 0.123749 & 0.726418 \\
\bottomrule
\end{tabular}
\end{subtable}
\hfill
\begin{subtable}[t]{0.48\linewidth}
\centering
\caption{Classifier $g_1$. Each entry of the table is a numerical version of $\mathbb{P}_{X\sim P_\theta}(g_1(X)=m)$, for any $\theta,m\in\calL$}
\label{tab:exp3_g1}
\renewcommand{\arraystretch}{1.15}
\begin{tabular}{c ccc}
\toprule
 & $g_1(X)=0$ & $g_1(X)=1$ & $g_1(X)=2$ \\
\midrule
$\theta=0$ & 0.482644 & 0.230909 & 0.286447\\
$\theta=1$ & 0.198230 & 0.515929 & 0.285841 \\
$\theta=2$ & 0.200485 & 0.233546 & 0.565969 \\
\bottomrule
\end{tabular}
\end{subtable}
\end{table}

However, we usually do not know which classifier has the strongest separability since the ground truth tuple is unknown. 
Then, one way to utilize the two classifiers is incorporating an dynamic \emph{weighting} on the e-process constructed based on these two classifiers. Specifically, let $\{W_{n,1}\}_{n\ge0}$ and $\{W_{n,2}\}_{n\ge0}$ be the $e$-process defined as follows: Let $\hat{j}_{t,k} = \argmax_{j \in \calL} \; \hat{\bp}_{t-1,k}[j]$ and $\hat{\bp}_{n,k} = (1/n) \sum_{i=1}^n\delta_{g_k(X_i)}$ for $k\in\{1,2\}$ and
\begin{align}
        &W_{0,1} = W_{0,2} =  1, \qquad W_{n,k}:=\int_{-1}^0 \lp \prod_{t=1}^n \lp 1 + \lambda \times \lp \boldsymbol{1}_{g_k(X_t)=0} - \boldsymbol{1}_{g_k(X_t)=\hat{j}_{t,k}} \rp \rp \rp d\lambda.
\end{align}
Then, we can define the dynamic weighting sequence $\{w_n\}_{n\ge0}$, $w_n=[w_{n,1}, w_{n,2}]$, which is (adaptively) determined by the past information, resulting the mixture e-process $\{W^{\text{mix}}_n\}_{n\ge1}$ defined as $W^{\text{mix}}_n=w_{n,1}W_{n,1}+w_{n,2}W_{n,2}$ for each $n\ge0$. The goal is to make $\{w_n\}_{n\ge0}$ concentrate to $w_\infty=[0,1]$ in this case since the classifier $g_2$ has a stronger separability.

We first argue the validity of this mixture method. Since the sum of non-negative supermartingales is still a non-negative supermartingale, we can directly apply Ville's inequality to establish the level-$\alpha$ property of $\{W^{\text{mix}}_n\}_{n\ge0}$. Moreover, regarding the power-one property, since both classifiers are separable under the ground truth tuple, it is intuitive that the process $\{W^{\text{mix}}_n\}_{n\ge0}$ still satisfies the power-one property under any weight.

However, designing the dynamic weighting sequence $\{w_n\}_{n\ge0}$ is a nontrivial task. It connects to the classic problem known as the \emph{exploration-exploitation tradeoff} in the multi-armed bandit literature \cite{Slivkins2024}. A high-level intuition is that we can regard the term $\lp \boldsymbol{1}_{g_k(X_t)=0} - \boldsymbol{1}_{g_k(X_t)=\hat{j}_{t,k}} \rp$ as an instantaneous reward and apply a bandit algorithm since for any fixed $\hat{j}_{t,k}\neq0$, the expected value of this term under $g_2$ is greater than that under $g_1$, due to the stronger separability that $g_2$ owns. Nevertheless, we leave the detailed analysis as a future direction. 

Here, we demonstrate that the benefit of using multiple classifiers exists. Assume an \emph{oracle} weighting mechanism sets $w_1=0.1$ and $w_2=0.9$. The empirical results are plotted in Figure~\ref{fig:exp3}. We observe that the average stopping time under the mixture method (blue dot curve) is smaller than that achieved by a single classifier (orange rectangular curve). Therefore, a weighting mechanism exists that successfully leverages classifier heterogeneity. As noted, designing a dynamic weighting scheme is not addressed in this paper. However, we have provided a clear direction for future work.

\begin{figure}[htb]
\centering
\includegraphics[width=0.48\linewidth]{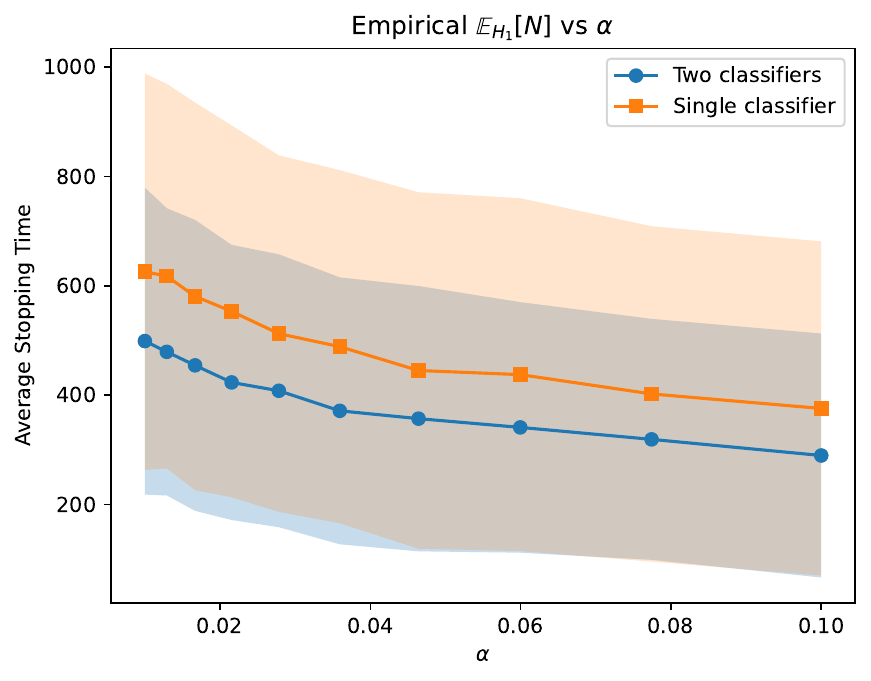}
\caption{Average expected stopping time vs. desired level-$\alpha$ values. The $\alpha$ values are taken to be the same as those defined in Section~\ref{sec:verify_thm_sht}. For each $\alpha$, we run $300$ independent trials and plot the average stopping time for both scenarios (for two classifiers represented by blue dots and for the single classifier represented by orange rectangles). We can note that the mixture method constructed by two classifiers outperforms the single classifier.}
\label{fig:exp3}
\end{figure}

\section{Conclusion and Future Work}
\label{sec:conclusion}
We propose a general framework for constructing nonparametric sequential power-one tests for problems where the null and alternatives are defined indirectly through offline data. Our approach proceeds by constructing an e-process based on an offline-trained separable classifier that yields an anytime-valid test satisfying the level-$\alpha$ and power-one properties. Furthermore, we provide a non-asymptotic analysis of the expected stopping time under the alternative, showing that the separability of the classifier plays an important role. We also investigate how the training sample size and the classifier family affect the attainability of separable classifiers and the achievability. Specifically, we establish sufficient and necessary conditions for the required training sample size and analyze its impact, along with the classifier family, on the tail probability of the stopping time. Finally, we extend our stopping rule and theoretical guarantees to the training-testing mismatch problem, showing robustness when the mismatch is bounded by the classifier's separability, and demonstrate its applicability to sequential change-point detection. 

Empirically, we use synthetic and real data to verify that for the proposed stopping rule and its extensions, the observed average stopping times (or average delay for sequential change-point detection) match their theoretical scaling trends. We also verify the robustness of this classifier setting. Regarding future work, it remains unclear whether our proposed test is optimal in general (assuming a fixed classifier and a fixed distribution tuple with no mismatch), as we  only establish asymptotic optimality~(as $\alpha \downarrow 0$) for specific problem instances. Moreover, the impact of the classifier on the overall performance is not yet fully understood since we do not directly investigate the complexity of $\calG$ in the minimax lower bound. Finally, a thorough theoretical and empirical study of the dynamic weighting discussed in Section~\ref{sec:exp_mixture} is another interesting direction for future work.

\bibliographystyle{IEEEtran}
\bibliography{ref}
\newpage 
\appendix

\section{Additional Background}
\label{appendix:additional-background}

\begin{fact}[Ville's Inequality~\cite{Ville1939}]
    \label{fact:villes-inequality}
    Suppose $\{W_n: n \geq 0\}$ is a nonnegative supermartingale. For any $\alpha \in (0, 1)$, we have the following 
    \begin{align}
        \mathbb{P}\lp \exists n \geq 0: W_n \geq 1/\alpha \rp \leq \frac{\mathbb{E}[W_0]}{\alpha}. 
    \end{align}
\end{fact}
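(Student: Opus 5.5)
The plan is the standard stopping-time argument, reduced to a finite horizon and then passed to the limit. Let $\{\calF_n\}_{n\ge0}$ be the filtration to which $\{W_n\}$ is adapted. Define the first crossing time $\sigma \defined \inf\{n\ge 0: W_n \ge 1/\alpha\}$, with $\inf\emptyset=+\infty$; this is a stopping time. The event in question is exactly $\{\sigma<\infty\}=\bigcup_{N\ge0}\{\sigma\le N\}$. By continuity of probability from below, it therefore suffices to prove
\[
\mathbb{P}(\sigma\le N)\le \alpha\,\mathbb{E}[W_0] \qquad \text{for every fixed } N\ge0,
\]
and then let $N\to\infty$.

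\textbf{Step 1: the stopped process is a supermartingale.} For fixed $N$, I would first show that $\{W_{\sigma\wedge n}\}_{n\ge0}$ is a supermartingale. This is the only step with real content, and it is elementary. Write
\[
W_{\sigma\wedge n}-W_{\sigma\wedge(n-1)}=\boldsymbol{1}_{\{\sigma\ge n\}}\,(W_n-W_{n-1}).
\]
The event $\{\sigma\ge n\}=\{\sigma\le n-1\}^c$ lies in $\calF_{n-1}$. Taking conditional expectations given $\calF_{n-1}$, we can pull out the indicator and apply the supermartingale property $\mathbb{E}[W_n-W_{n-1}\mid\calF_{n-1}]\le 0$. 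Nonnegativity of $W$ ensures that all expectations are well defined. Iterating gives
\[
\mathbb{E}[W_{\sigma\wedge N}]\le \mathbb{E}[W_0].
\]
This is the bounded-horizon optional stopping theorem, proved directly, so no integrability issues at $\sigma=\infty$ arise.

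\textbf{Step 2: Markov-type lower bound.} Next, bound $W_{\sigma\wedge N}$ from below. On $\{\sigma\le N\}$ we have $W_{\sigma\wedge N}=W_\sigma\ge 1/\alpha$ by definition of $\sigma$. On the complement we use only $W_{\sigma\wedge N}\ge 0$, which is where nonnegativity is essential. Hence
\[
W_{\sigma\wedge N}\ge \tfrac{1}{\alpha}\boldsymbol{1}_{\{\sigma\le N\}}.
\]
Taking expectations and combining with Step 1 yields $\mathbb{P}(\sigma\le N)\le \alpha\,\mathbb{E}[W_0]$.

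Letting $N\to\infty$ by monotone continuity gives $\mathbb{P}(\exists n\ge0: W_n\ge 1/\alpha)\le \mathbb{E}[W_0]\cdot\alpha^{-1}\cdot\alpha^{2}$. More precisely, the bound obtained is $\alpha\,\mathbb{E}[W_0]$, which equals the stated bound $\mathbb{E}[W_0]/\alpha$ when written in the paper's normalization, where the threshold is $1/\alpha$ and $W_0=1$, so that the crossing probability is at most $\alpha$. I would double-check the displayed form against this normalization: the argument proves $\mathbb{P}(\sup_n W_n\ge c)\le \mathbb{E}[W_0]/c$ with $c=1/\alpha$.

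I expect no serious obstacle. The only points needing care are (i) verifying that $\{\sigma\ge n\}\in\calF_{n-1}$, so the indicator can be pulled out of the conditional expectation, and (ii) avoiding any appeal to optional stopping at the possibly infinite time $\sigma$, which the truncation at $N$ handles.
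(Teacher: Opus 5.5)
Your Steps 1 and 2 are the standard truncated optional-stopping proof, and they are correct. The paper gives no proof of this Fact; it only cites Ville. Truncating at $N$ and then using continuity from below is the right way to avoid optional stopping at a possibly infinite $\sigma$.

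The one thing to fix is your final paragraph. What you have proved is $\mathbb{P}(\exists n\ge 0: W_n\ge 1/\alpha)\le \alpha\,\mathbb{E}[W_0]$. This does not ``equal'' $\mathbb{E}[W_0]/\alpha$ under any normalization: with $W_0=1$ the two right-hand sides are $\alpha$ and $1/\alpha>1$.

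The correct closing step is an implication. Since $\alpha\in(0,1)$ and $\mathbb{E}[W_0]\ge 0$, you have $\alpha\,\mathbb{E}[W_0]\le \mathbb{E}[W_0]/\alpha$, so the displayed (weaker) inequality follows from yours. You should also state that the displayed right-hand side is almost certainly a typo for $\alpha\,\mathbb{E}[W_0]$. As written it is vacuous when $W_0=1$. The paper uses this Fact to conclude $\mathbb{P}_0(\tau<\infty)\le\alpha$ in Theorem~\ref{theorem:sequential-test}, and that needs exactly the sharper form your argument establishes.
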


\section{Proof of Theorem~\ref{theorem:sequential-test}}
\label{proof:sequential-test}
In this section, we prove that our proposed stopping rule defined in Section~\ref{sec:proposed_test} satisfies the level-$\alpha$ power one criterion and analyze its resulting expected stopping time. We assume that the ground-truth distribution tuple $\mathbf{P}\in\mathcal{P}_{\text{sep}}$ and its corresponding separable classifier are given (while the decision maker does not know the distribution tuple $\mathbf{P}$). Moreover, the training and testing distributions have no mismatch. We derive the proofs through the notations defined in the beginning of Section~\ref{sec:proposed_test}.

\paragraph{Under $H_0$.} Under the null, the result follows directly from the fact that $\{W_n: n \geq 0\}$ is a nonnegative supermartingale with an initial value of $1$.

\paragraph{Under $H_1$.} We directly show that the expected stopping time is finite, which implies that the power-one property is satisfied. Throughout this proof, for notational simplicity, we use $\{Z_i: i \geq 1\}$ to denote the process $\{g(X_i): i \geq 1\}$. Since $\tau$ is a positive integer valued random variable, our starting point is, for any $\theta\in\{1,\ldots,L\}$,
\begin{align}
    \mathbb{E}_\theta[\tau] & = \sum_{n = 1}^{\infty} \mathbb{P}_\theta\lp \tau \geq n \rp  \leq \sum_{n = 1}^{\infty} \mathbb{P}_\theta\lp \log W_n \geq  \log(1/\alpha)\rp,  \label{eq:sequential-test-proof-1}
\end{align}
where the last inequality holds true since $\{\tau\geq n\}\subseteq\{\log W_n \geq  \log(1/\alpha)\}$.
Before proceeding further, we need to introduce some new terms which are used to split the probability of $\{\log W_n \geq  \log(1/\alpha)\}$. For any $n \geq 1$, introduce the following events: 
\begin{align}
    E_n &= \cap_{j=0}^L E_{n,j}, \quad \text{with} \quad E_{n,j} \defined \left\{ \forall 1 \leq t \leq n: \left\lvert \hat{\bp}_t[j] - \bp_\theta[j] \right\rvert \leq b^{(n)}_t \right\}, \quad \text{for}\quad b^{(n)}_t \defined \sqrt{\frac{\log 2n^3}{2t}}, \\
    F_n &= \left\{ \left\lvert \frac{1}{n} \sum_{t=1}^n \lp \boldsymbol{1}_{Z_t=0} - \boldsymbol{1}_{Z_t=\hat{j}_t}\rp - \lp \bp_\theta[0] - \bp_\theta[\hat{j}_t] \rp \right\rvert \leq c_n  \right\}, \quad \text{for} \quad c_n = \sqrt{\frac{2 \log 2n^2}{n}}. 
\end{align}
Our first lemma establishes that these two events are of sufficiently large probability. 
\begin{lemma}
    \label{lemma:sequential-test-1} 
    For all $n \geq 2$, $\theta\in\{1,\ldots,L\}$, we have 
    \begin{align}
        \mathbb{P}_\theta\lp E_n \rp \geq 1 - \frac{L+1}{n^2}, \quad \text{and} \quad 
        \mathbb{P}_\theta\lp F_n \rp \geq 1 - \frac{1}{n^2}. 
    \end{align}
    Crucially, this implies 
    \begin{align}
        &\sum_{n \geq 1} \mathbb{P}_\theta\lp E_n^c \rp \leq \sum_{n \geq 1} \frac{L+1}{n^2} = \frac{(L+1)\pi^2}{6} < \infty, 
        \quad \text{and} \quad 
        \sum_{n \geq 1} \mathbb{P}_\theta\lp F_n^c \rp \leq \sum_{n \geq 1} \frac{1}{n^2} = \frac{\pi^2}{6} < \infty. 
    \end{align}
\end{lemma}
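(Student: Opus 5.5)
The plan is to prove the two bounds separately: a union bound over Hoeffding's inequality for $E_n$, and the Azuma--Hoeffding inequality for the martingale underlying $F_n$. The radii $b^{(n)}_t$ and $c_n$ are tuned so that the exponents cancel exactly. Throughout, $\theta\in[L]$ is fixed, and $Z_t=g(X_t)$ are i.i.d.\ with pmf $\bp_\theta$.

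\textbf{Bound for $E_n$.} Fix $j\in\calL$ and $1\le t\le n$. The quantity $\hat{\bp}_t[j]=\frac1t\sum_{s=1}^t\boldsymbol{1}_{Z_s=j}$ is an average of $t$ i.i.d.\ $[0,1]$-valued variables with mean $\bp_\theta[j]$. Hoeffding's inequality gives
\[
\mathbb{P}_\theta\bigl(|\hat{\bp}_t[j]-\bp_\theta[j]|>b^{(n)}_t\bigr)\le 2\exp\bigl(-2t(b^{(n)}_t)^2\bigr)=2\exp(-\log 2n^3)=\frac{1}{n^3}.
\]
A union bound over $t\in\{1,\dots,n\}$ gives $\mathbb{P}_\theta(E_{n,j}^c)\le 1/n^2$. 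A further union bound over the $L+1$ labels $j$ gives $\mathbb{P}_\theta(E_n^c)\le (L+1)/n^2$.

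\textbf{Bound for $F_n$.} Set $D_t\defined(\boldsymbol{1}_{Z_t=0}-\boldsymbol{1}_{Z_t=\hat{j}_t})-(\bp_\theta[0]-\bp_\theta[\hat{j}_t])$. The key observation is that $\hat{j}_t$ depends only on $Z_1,\dots,Z_{t-1}$, so it is predictable. Since $Z_t$ is independent of the past, $\mathbb{E}_\theta[D_t\mid Z_1,\dots,Z_{t-1}]=0$, and $\{D_t\}$ is a martingale difference sequence.

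Conditionally on the past, $D_t$ equals a predictable constant plus $\boldsymbol{1}_{Z_t=0}-\boldsymbol{1}_{Z_t=\hat{j}_t}\in[-1,1]$. It therefore lies in a predictable interval of length $2$; when $\hat{j}_t=0$ we simply have $D_t\equiv0$. The Azuma--Hoeffding inequality then yields
\[
\mathbb{P}_\theta\Bigl(\Bigl|\sum_{t=1}^n D_t\Bigr|>nc_n\Bigr)\le 2\exp\Bigl(-\frac{2n^2c_n^2}{4n}\Bigr)=2\exp\Bigl(-\frac{nc_n^2}{2}\Bigr)=2\exp(-\log 2n^2)=\frac1{n^2}.
\]
This is exactly $\mathbb{P}_\theta(F_n^c)\le 1/n^2$.

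\textbf{Summability.} The tail bounds hold for $n\ge2$. For $n=1$ the bounds $(L+1)/1^2$ and $1/1^2$ are at least $1$, so they hold trivially. Summing over $n$ gives the stated constants $(L+1)\pi^2/6$ and $\pi^2/6$.

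\textbf{Main obstacle.} The only delicate point is the $F_n$ bound. Because $\hat{j}_t$ is data-dependent, the summands are not i.i.d., so plain Hoeffding cannot be applied. The fix is the predictability of $\hat{j}_t$, which is guaranteed because it is computed from $\hat{\bp}_{t-1}$ with deterministic tie-breaking. That predictability is what makes $\{D_t\}$ a bounded martingale difference sequence, and it is the step I would verify most carefully.
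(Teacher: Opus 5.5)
Your proposal is correct and follows the paper's route: Hoeffding's inequality with union bounds over $t\le n$ and $j\in\calL$ gives $E_n$, and Azuma's inequality for the martingale $\sum_{t\le n} D_t$ (the paper's $V_n$) gives $F_n$. Your predictable-range (length-$2$) form of Azuma--Hoeffding is in fact more careful than the paper. The paper asserts $|V_n-V_{n-1}|\le 1$, but the increment equals $1-\bp_\theta[0]+\bp_\theta[\hat{j}_n]>1$ when $Z_n=0$ and $\hat{j}_n=\theta$. Your range form still recovers exactly the stated bound $2\exp(-nc_n^2/2)=1/n^2$.
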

The proof of this Lemma is deferred to~\Cref{proof:sequential-test-1}.

Continuing with~\eqref{eq:sequential-test-proof-1}, we can now write the following: 
\begin{align}
    \mathbb{E}_\theta[\tau] & \leq \sum_{n \geq 1} \mathbb{P}_\theta\lp \lbr \log W_n < \log(1/\alpha) \rbr \cap (E_n \cap F_n)\rp + \mathbb{P}_\theta\lp E_n^c \rp + \mathbb{P}_\theta(F_n^c) \\
    & \leq \sum_{n \geq 1} \mathbb{P}_\theta\lp \lbr \log W_n < \log(1/\alpha) \rbr \cap (E_n \cap F_n)\rp + \frac{(L+2)\pi^2}{6}. \label{eq:sequential-test-proof-2}
\end{align}
In the second inequality, we have used Lemma~\ref{lemma:sequential-test-1} to bound the last two terms. This inequality implies that we need to analyze the behavior of $\log W_n$ under the events $E_n \cap F_n$. To do so, we begin with the following lemma. 
\begin{lemma}
    \label{lemma:sequential-test-2} Let $Y_i$ denote $\boldsymbol{1}_{Z_i = 0} - \boldsymbol{1}_{Z_i=\hat{j}_i}$, where $\hat{j}_i$ is selected according to the rule described in~\eqref{eq:j-selection}. Let $\Delta_\theta[j]$ denote the value $\bp_\theta[j]-\bp_\theta[0]$ for any $j \in \calL$, and observe that for any $i \geq 1$, we have $\mathbb{E}[Y_i|\calF_{i-1}] = - \Delta_\theta[\hat{j}_i]$.  Then, under the event $E_n$, we have the following: $\forall\,\theta\in\{1,\ldots,L\}$
    \begin{align}
        \frac{1}{n} \sum_{i=1}^n \Delta_\theta[\hat{j}_i] \geq \Delta_\theta - \sqrt{\frac{2 \log2 n^3}{n}}, 
    \end{align}
    where $\Delta_\theta = \max_{j \in \calL} \bp_\theta[j] - \bp_\theta[0]=\bp_\theta[\theta]-\bp_\theta[0]$. 
\end{lemma}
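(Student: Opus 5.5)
The plan is to control each summand $\Delta_\theta - \Delta_\theta[\hat{j}_i] = \bp_\theta[\theta]-\bp_\theta[\hat{j}_i] \ge 0$ (the regret of the follow-the-leader choice $\hat{j}_i$) on the event $E_n$, and then sum over $i=1,\ldots,n$. Write $\ell_n \defined \log(2n^3)$. The target is
\begin{align}
\sum_{i=1}^n \lp \bp_\theta[\theta]-\bp_\theta[\hat{j}_i]\rp \le \sqrt{2 n \ell_n},
\end{align}
which is equivalent to the claim after dividing by $n$.

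\textbf{Step 1 (per-round regret).} For $i \ge 2$, the selection rule~\eqref{eq:j-selection} gives $\hat{\bp}_{i-1}[\hat{j}_i] \ge \hat{\bp}_{i-1}[\theta]$. Hence
\begin{align}
\bp_\theta[\theta]-\bp_\theta[\hat{j}_i] \le \lp \bp_\theta[\theta]-\hat{\bp}_{i-1}[\theta]\rp + \lp \hat{\bp}_{i-1}[\hat{j}_i]-\bp_\theta[\hat{j}_i]\rp .
\end{align}
On $E_n = \cap_j E_{n,j}$, each bracket is at most $b^{(n)}_{i-1} = \sqrt{\ell_n/(2(i-1))}$, because $E_n$ holds simultaneously for all labels, including the random label $\hat{j}_i$. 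The summand is also trivially at most $\bp_\theta[\theta] \le 1$. For $i=1$, $\hat{\bp}_0$ is uniform and the summand is at most $1$. So the $i$-th regret is at most $\min\{1, \sqrt{2\ell_n/(i-1)}\}$. It is also exactly $0$ whenever $\hat{j}_i=\theta$.

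\textbf{Step 2 (summation).} I would sum these bounds with $\sum_{t=1}^{n-1} t^{-1/2} \le 2\sqrt{n}$. The main obstacle is the constant. Summing the crude per-round bound gives $\approx 2\sqrt{2n\ell_n}$, which is twice the stated deviation. To recover the constant, I would avoid charging both deviation terms at every round. The idea is to split the regret with a be-the-leader argument on the linear "losses" $-\boldsymbol{1}_{Z_s=j}$. Since $\hat{j}_{i+1}$ maximizes the cumulative count $\sum_{s\le i}\boldsymbol{1}_{Z_s=j}$, induction gives $\sum_{i=1}^n \boldsymbol{1}_{Z_i=\hat{j}_{i+1}} \ge n\hat{\bp}_n[\theta]$.

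This reduces the regret to two pieces. The first is one end-of-horizon deviation $n(\bp_\theta[\theta]-\hat{\bp}_n[\theta]) \le n b^{(n)}_n = \sqrt{n\ell_n/2}$ on $E_n$. The second is a "leader-change" term $\sum_i (\bp_\theta[\hat{j}_{i+1}] - \bp_\theta[\hat{j}_i])$ together with the deviations of $\hat{\bp}$ at the selected labels. I would bound the leader-change term using the fact that on $E_n$ the leader can only switch away from $\theta$ while $\bp_\theta[\theta]-\bp_\theta[j] \le 2b^{(n)}_{i-1}$. I would then bookkeep the pieces so that only one deviation of size $b^{(n)}_{i-1}$ is charged per round, giving $\sum_{t<n}\sqrt{\ell_n/(2t)} \le \sqrt{2n\ell_n}$.

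If this bookkeeping fails to close with exactly the constant $\sqrt{2}$, I would fall back to the crude sum from Step 1. That would establish the lemma with constant $2\sqrt{2}$ in place of $\sqrt{2}$, which changes only the numerical constants in $N_0$ and the downstream $\mathcal{O}$ bound of Theorem~\ref{theorem:sequential-test}. Note that the expectation identity $\mathbb{E}[Y_i\mid\calF_{i-1}] = -\Delta_\theta[\hat{j}_i]$ stated in the lemma is immediate, since $\hat{j}_i$ is $\calF_{i-1}$-measurable and $Z_i$ is independent of $\calF_{i-1}$.
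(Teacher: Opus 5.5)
Your Step~1 is exactly the paper's argument. The selection rule, plus $E_n$ applied at both $\theta$ and the random label $\hat{j}_i$, bounds the round-$i$ regret by $2b^{(n)}_{i-1}=\sqrt{2\ell_n/(i-1)}$ for $i\ge 2$, and by $1$ at $i=1$.

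The gap is in your refined Step~2. The be-the-leader bookkeeping that would charge only one deviation per round is never carried out, and it cannot be, because the inequality with constant $\sqrt{2}$ is false on $E_n$. Here is a counterexample.

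Take $L=2$, $\theta=1$, and $\bp_1=(\tfrac{1}{3}-\epsilon,\tfrac{1}{3}+\epsilon,\tfrac{1}{3})$ with $\epsilon=\sqrt{2\ell_n/n}$; the other rows can be chosen to keep $g$ separable. A round then costs $2\epsilon$ if $\hat{j}_i=0$ and $\epsilon$ if $\hat{j}_i=2$. Write $C_t[j]$ for counts and $d_j=C_t[j]-t\bp_1[j]$ for deviations. The event $E_n$ reads $|d_j|\le D_t\defined\sqrt{t\ell_n/2}$, and one checks that $t\epsilon\le D_t$ iff $t\le n/4$, and $t\epsilon\le 2D_t$ iff $t\le n$.

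For $t\lesssim n/4$, emit $0,1,2,0,1,2,\ldots$. Label $0$ leads (ties go to the smallest index), and $(d_0,d_1,d_2)=(t\epsilon,-t\epsilon,0)+O(1)$, which stays inside $E_n$.

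For $n/4\lesssim t\lesssim n$, steer the deviations along $(2D_t-t\epsilon,\,-D_t,\,t\epsilon-D_t)$ plus $O(1)$ margins. These targets have the following properties:
\begin{itemize}
\item they sum to zero and lie in $[-D_t,D_t]$ on this range;
\item they change by $O(\epsilon)$ per step, so a label sequence tracks them to within $O(1)$;
\item before the margins they give $C_t[2]-C_t[1]=0$ and $C_t[2]-C_t[0]=3(t\epsilon-D_t)\ge 0$, so with the margins label $2$ leads.
\end{itemize}

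Only $O(1/\epsilon)$ rounds are lost near $t=n/4$ and $t=n$. This positive-probability path in $E_n$ therefore has $\sum_{i\le n}(\Delta_\theta-\Delta_\theta[\hat{j}_i])=\tfrac{n}{4}\cdot 2\epsilon+\tfrac{3n}{4}\cdot\epsilon-O(1)=\tfrac{5}{4}\sqrt{2n\ell_n}-O(1)$. For large $n$ this exceeds the claimed $\sqrt{2n\ell_n}$.

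So your suspicion about the constant is correct, and it exposes an error in the paper. Its proof sums the same per-round bounds and then uses $\sum_{i=1}^n i^{-1/2}\le\sqrt{n}$, which fails for every $n\ge 2$ (the sum is at least $\sqrt{n}$).

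Your fallback is the true statement, and it is exactly what the paper's argument gives once that step is repaired. To land on $2\sqrt{2n\ell_n}$ rather than $2\sqrt{2n\ell_n}+1$, use $\sum_{t=1}^{n-1}t^{-1/2}\le 2\sqrt{n-1}-1$, together with $\sqrt{2\ell_n}\ge 1$ to absorb the $i=1$ term.

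Downstream, Lemma~\ref{lemma:sequential-test-3} becomes $\frac{1}{n}\sum_{i=1}^n Y_i\le-\Delta_\theta+\sqrt{18\log(2n^3)/n}$. Only the constant defining $N_0$ changes, and the order in Theorem~\ref{theorem:sequential-test} survives, as you anticipated. Drop the be-the-leader sketch and state the lemma with $\sqrt{8\log(2n^3)/n}$ in place of $\sqrt{2\log(2n^3)/n}$.
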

The proof of this statement is in Appendix~\ref{proof:sequential-test-2}. 

Our next result compares the average conditional expectations of $(Y_1, \ldots, Y_n)$ to $\Delta_\theta$. 
\begin{lemma}
    \label{lemma:sequential-test-3} Following the same notations as in Lemma~\ref{lemma:sequential-test-2}, under the event $E_n \cap F_n$, we have 
    \begin{align}
        \frac{1}{n} \sum_{i=1}^n Y_i \leq - \Delta_\theta + \sqrt{\frac{2 \log 2 n^3}{n}} +  \sqrt{\frac{2 \log 2n^2}{n}} \leq - \Delta_\theta + \sqrt{\frac{8 \log 2n^3}{n}}\quad\forall\,\theta\in\{1,\ldots,L\}.
    \end{align}
    As a result,  we have 
    \begin{align}
        \frac{1}{n} \sum_{i=1}^n Y_i \leq - \frac{\Delta_\theta}{2}, \quad \text{for} \quad  n \geq N_0 \defined \inf \{m \geq 1: \sqrt{ (8 \log (2n^3))/n} \leq \Delta_\theta/2, \forall\,n\ge m\}  = \mc{O}\lp \frac{\log(1/\Delta_\theta)}{\Delta^2_\theta}\rp. \label{eq:sequential-test-proof-5}
    \end{align}
\end{lemma}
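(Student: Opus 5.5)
The plan is to split $\frac1n\sum_i Y_i$ into its predictable part and a martingale-deviation part, and then control each with the event we are on. Write
\[
\frac{1}{n}\sum_{i=1}^n Y_i = \frac{1}{n}\sum_{i=1}^n \Bigl(Y_i - \bigl(\bp_\theta[0]-\bp_\theta[\hat{j}_i]\bigr)\Bigr) \;-\; \frac{1}{n}\sum_{i=1}^n \Delta_\theta[\hat{j}_i],
\]
using $\bp_\theta[0]-\bp_\theta[\hat j_i] = -\Delta_\theta[\hat j_i]$. This is just $\mathbb{E}[Y_i\mid\calF_{i-1}]=-\Delta_\theta[\hat j_i]$ from Lemma~\ref{lemma:sequential-test-2}, which holds because $\hat j_i$ is $\calF_{i-1}$-measurable and $Z_i$ is independent of $\calF_{i-1}$.

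\textbf{The deviation term.} On $F_n$, the definition of $F_n$ bounds the absolute value of the first average by $c_n=\sqrt{2\log(2n^2)/n}$.

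\textbf{The drift term.} On $E_n$, Lemma~\ref{lemma:sequential-test-2} gives $\frac1n\sum_i\Delta_\theta[\hat j_i]\ge \Delta_\theta-\sqrt{2\log(2n^3)/n}$.

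Adding the two bounds gives the first inequality of the lemma on $E_n\cap F_n$.

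For the second inequality, note that $\log(2n^2)\le\log(2n^3)$. Each square root is therefore at most $\sqrt{2\log(2n^3)/n}$, and their sum is at most $2\sqrt{2\log(2n^3)/n}=\sqrt{8\log(2n^3)/n}$.

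For the consequence, take $n\ge N_0$. By definition of $N_0$ we have $\sqrt{8\log(2n^3)/n}\le\Delta_\theta/2$, which gives $\frac1n\sum_iY_i\le-\Delta_\theta/2$. The function $\log(2n^3)/n$ is eventually decreasing to $0$, so $N_0$ is finite, and the "for all $n\ge m$" clause in its definition is satisfied.

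The only step needing care is the order estimate $N_0=\mathcal O(\log(1/\Delta_\theta)/\Delta_\theta^2)$. I would handle it with the standard inversion: $n\ge C\log(1/\Delta)/\Delta^2$ implies $\log(2n^3)/n\le c\Delta^2$ for suitable absolute constants. To see this, plug in $n=C\Delta^{-2}\log(e/\Delta)$. Then $\log(2n^3)\lesssim \log(1/\Delta)+\log C$, so
\[
\frac{\log(2n^3)}{n}\lesssim \frac{\Delta^2(\log(1/\Delta)+\log C)}{C\log(e/\Delta)}\le \frac{\Delta^2}{32}
\]
once $C$ is large. Monotonicity of $\log(2n^3)/n$ for $n\ge 1$ then extends the bound to all larger $n$. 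Since $\Delta_\theta\le1$, I would write $\log(e/\Delta_\theta)$ in place of $\log(1/\Delta_\theta)$ to avoid degeneracy when $\Delta_\theta$ is close to $1$.
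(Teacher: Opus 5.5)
Your proof is correct and matches the paper's argument: bound the deviation term by $c_n$ on $F_n$ and the drift term by Lemma~\ref{lemma:sequential-test-2} on $E_n$, then use $\log(2n^2)\le\log(2n^3)$ and the definition of $N_0$. Your inversion giving $N_0=\mathcal{O}(\log(e/\Delta_\theta)/\Delta_\theta^2)$ proves a step the paper only asserts, and the $\log(e/\cdot)$ form correctly avoids the degenerate bound at $\Delta_\theta=1$; one small correction is that $\log(2n^3)/n$ is decreasing only for $n\ge 3$, not for all $n\ge 1$, which does no harm because the relevant $n$ are large.
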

The proof of this result in Appendix~\ref{proof:sequential-test-3}.
We need one final technical result that gives us a lower bound on $\log W_n$, before we can combine everything to conclude the proof. 
\begin{lemma}
    \label{lemma:sequential-test-4} For $n \geq N_0$, with $N_0$ defined in~\eqref{eq:sequential-test-proof-5}, we have the following under the event $E_n \cap F_n$: $\forall\,\theta\in\{1,\ldots,L\}$
    \begin{align}
        \frac{\log W_n}{n} \geq  \frac{\Delta^2_\theta}{16} - \frac{2 \log n}{n}. 
    \end{align}
    This immediately leads to the following simplification: $\forall\,\theta\in\{1,\ldots,L\}$
    \begin{align}
        \frac{\log W_n}{n} \geq \frac{\Delta^2_\theta}{32}, \quad \text{for} \quad n \geq N_1 \defined \inf \lbr m \geq 1:  \frac{2 \log n}{n} \leq \frac{\Delta^2_\theta}{32},\forall\,n\geq m \rbr = \mc{O}\lp \frac{\log(1/\Delta_\theta)}{\Delta^2_\theta}\rp.  \label{eq:sequential-test-proof-7}
    \end{align}
\end{lemma}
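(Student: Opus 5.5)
The plan is to lower bound the mixture integral in~\eqref{eq:e-process} by integrating only over a short window of betting parameters near the best fixed bet. That best bet is determined by $S_n \defined \sum_{i=1}^n Y_i$. On $E_n \cap F_n$ with $n \geq N_0$, Lemma~\ref{lemma:sequential-test-3} gives $S_n \leq -n\Delta_\theta/2$. Since each $Y_i \in \{-1,0,1\}$, we also have $S_n \geq -n$.

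\textbf{Step 1: a pointwise bound on the integrand.} I will use the elementary inequality $\log(1+x) \geq x - x^2$ for $x \geq -1/2$. For $\lambda \in [-1/2, 0]$ we have $|\lambda Y_i| \leq 1/2$ and $Y_i^2 \leq 1$. Hence
\[
\prod_{t=1}^n \bigl(1+\lambda Y_t\bigr) \geq \exp\bigl(\lambda S_n - \lambda^2 n\bigr) \eqqcolon \exp\bigl(f(\lambda)\bigr).
\]
The function $f$ is a concave quadratic. It is maximized at $\lambda^* = S_n/(2n)$, which lies in $[-1/2, -\Delta_\theta/4]$. Its maximum value is $f(\lambda^*) = S_n^2/(4n) \geq n\Delta_\theta^2/16$. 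Moreover, $f(\lambda) = f(\lambda^*) - n(\lambda-\lambda^*)^2$ exactly.

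\textbf{Step 2: integrate over a window.} I restrict the integral to $[\lambda^*, \lambda^*+1/n]$. The integrand is nonnegative on all of $[-1,0]$ because $1+\lambda Y_t \geq 0$, so dropping the rest of the range only decreases the integral. This window lies inside $[-1/2, 0]$ provided $1/n \leq \Delta_\theta/4$. That holds for $n \geq N_0$, since the definition of $N_0$ forces $\sqrt{8\log(2n^3)/n} \leq \Delta_\theta/2$, and this already implies $n \geq 4/\Delta_\theta$. On the window, $f(\lambda) \geq f(\lambda^*) - 1/n$. Therefore
\[
W_n \geq \frac{1}{n}\exp\Bigl(\frac{n\Delta_\theta^2}{16} - \frac{1}{n}\Bigr),
\qquad
\frac{\log W_n}{n} \geq \frac{\Delta_\theta^2}{16} - \frac{\log n}{n} - \frac{1}{n^2} \geq \frac{\Delta_\theta^2}{16} - \frac{2\log n}{n}
\]
for $n \geq 2$. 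The case $n=1$ can be absorbed, since $N_0 \geq 2$ trivially.

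\textbf{Step 3: the simplification.} For $n \geq \max\{N_0, N_1\}$ we have $2\log n/n \leq \Delta_\theta^2/32$, which gives $\log W_n/n \geq \Delta_\theta^2/32$. The order estimate $N_1 = \mathcal{O}(\log(1/\Delta_\theta)/\Delta_\theta^2)$ is the standard inversion of $\log n/n \leq c\Delta^2$.

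\textbf{Main obstacle.} The only delicate point is Step 2: the window must stay inside the region $[-1/2,0]$ where the logarithmic inequality is valid. This is why $\lambda^*$ must be bounded away from both $0$ and $-1$. Lemma~\ref{lemma:sequential-test-3}, together with $|Y_i| \leq 1$, supplies exactly these two bounds.
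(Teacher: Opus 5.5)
Your proof is correct, but it reaches the key inequality by a different route than the paper. The paper starts from a generic regret bound for the mixture method, citing~\cite{hazan2016introduction}:
$\log W_n \geq \sup_{\lambda\in[-1/2,0]}\sum_{i=1}^n \log(1+\lambda Y_i) - 2\log n$.
This holds deterministically for every sequence $Y_1,\ldots,Y_n\in\{-1,0,1\}$. Only afterwards does the paper apply $\log(1+x)\geq x-x^2$ and optimize the quadratic surrogate at $\lambda^*=S_n/(2n)\in[-1/2,-\Delta_\theta/4]$.

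You instead prove the needed lower bound on the mixture directly, in a Laplace-type way. You insert the quadratic surrogate pointwise into the integrand and keep only the window $[\lambda^*,\lambda^*+1/n]$, on which the surrogate loses at most $1/n$. This makes the argument self-contained; the paper never derives the constant $2$ in the regret bound it cites. It also gives the slightly sharper penalty $\log n + 1/n$ in place of $2\log n$.

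The price is that your bound is not a sequence-independent regret statement. The window must fit inside $[-1/2,0]$, and you have to verify this using the event $E_n\cap F_n$ and $n\geq N_0$. Concretely, you need $-\lambda^*\geq\Delta_\theta/4\geq 1/n$, where $n\geq N_0$ gives $n\geq 32\log 2/\Delta_\theta^2\geq 4/\Delta_\theta$; you assert this but it is worth writing out. The paper's route needs only $\lambda^*\in[-1/2,0]$ at the optimization step, and its first step is a reusable deterministic inequality.
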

The proof of this result in Appendix~\ref{proof:sequential-test-4}.

\sloppy We now have all the components to complete the proof. Starting with~\eqref{eq:sequential-test-proof-2}, we have with $T \defined \max\{N_0, N_1, 32\log(1/\alpha)/\Delta^2_\theta\}$, where $N_0$ and $N_1$ were introduced in~\eqref{eq:sequential-test-proof-5} and~\eqref{eq:sequential-test-proof-7} respectively.  
\begin{align}
    \mathbb{E}_\theta[\tau] 
    & \leq \sum_{n \geq 1} \mathbb{P}\lp \lbr \log W_n < \log(1/\alpha) \rbr \cap (E_n \cap F_n)\rp + \frac{(L+2)\pi^2}{6} \\
    & \leq T +  \sum_{n \geq T} \mathbb{P}\lp \lbr \log W_n < \log(1/\alpha) \rbr \cap (E_n \cap F_n)\rp + \frac{(L+2)\pi^2}{6} \\
    & \leq T +  \sum_{n \geq T} \mathbb{P}\lp \lbr \frac{\Delta_\theta^2}{32} < \frac{\log(1/\alpha)}{n} \rbr \cap (E_n \cap F_n)\rp + \frac{(L+2)\pi^2}{6} \\
    & = T + \frac{(L+2)\pi^2}{6},
\end{align}
where the third inequality applies Lemma~\ref{lemma:sequential-test-4} and the last equality uses the fact that for all $n \geq T$, we have $\log(1/\alpha)/n \leq \Delta^2/32$ by the definition of $T$. 

Hence, we have shown that 
\begin{align}
    \mathbb{E}_\theta[\tau] = T + \mc{O}(1) = \mc{O}\lp \frac{\log\!\big(1/(\alpha \Delta_\theta)\big)}{\Delta_\theta^2} \rp, 
    \quad \text{where} \quad 
    \Delta_\theta = \max_{j \in \calL} \bp_\theta[j] - \bp_\theta[0] = \bp_\theta[\theta]-\bp_\theta[0]. 
\end{align}

\subsection{Proof of Auxiliary Lemmas}
\label{proof:sequential-test-lemmas}

\subsubsection{Proof of Lemma~\ref{lemma:sequential-test-1}}
\label{proof:sequential-test-1}

\paragraph{Event $E_n$.} It suffices to show that for any $j \in \calL$, we have $\mathbb{P}(E_{n,j}^c) \leq 1/n^2$, since the required result follows by union bound. To show $E_{n,j}$, we observe that since $\hat{\bp}_t[j] = \frac{1}{t} \sum_{i=1}^t \boldsymbol{1}_{Z_i=j}$ is the average of bounded \iid random variables, we have by an application of Hoeffding's inequality: 
\begin{align}
    \mathbb{P}_\theta\lp \left\lvert \frac{1}{t} \sum_{i=1}^t \boldsymbol{1}_{Z_i=j} - \bp_\theta[j] \right\rvert \leq  \sqrt{\frac{ \log (2n^3)}{2t}}  \rp \geq 1 - \frac{1}{n^3}. 
\end{align}
Denoting $\sqrt{\log(2n^3)/2t}$ with $b^{(n)}_t$, and by taking a union bound for $1 \leq t \leq n$, we have 
\begin{align}
    &\mathbb{P}\lp \forall 1 \leq t \leq n:  \left\lvert \frac{1}{t} \sum_{i=1}^t \boldsymbol{1}_{Z_i=j} - \bp_\theta[j] \right\rvert \leq b^{(n)}_t\rp \geq 1 - \sum_{t=1}^n \frac{1}{n^3}= 1 - \frac{1}{n^2}.
\end{align}

\paragraph{Event $F_n$.}
We can apply Azuma's inequality to bound this term. Introduce the following notations: $Y_i = \boldsymbol{1}_{Z_i=0} - \boldsymbol{1}_{Z_i=\hat{j}_i}$, $S_n = \sum_{i=1}^n Y_i$, and $V_n = S_n - \sum_{i=1}^n \mathbb{E}_\theta\!\big[Y_i \mid \calF_{i-1}\big]$.
We have 
\begin{align}
    \mathbb{P}_\theta\lp F_n^c\rp = \mathbb{P}_\theta\lp \left\lvert S_n - \mathbb{E}_\theta\!\big[Y_i \mid \calF_{i-1}\big]\right\rvert>nc_n\rp = \mathbb{P}_\theta\lp |V_n| >nc_n\rp.
\end{align}
Then, since $\{V_n, \calF_{n}\}_{n\ge0}$ is a martingale process by
\begin{align}
    \mathbb{E}_\theta[V_n|\calF_{n-1}] - V_{n-1}&=\mathbb{E}_\theta[V_n-V_{n-1}|\calF_{n-1}]=\mathbb{E}_\theta[S_n-S_{n-1} - \mathbb{E}_\theta[Y_n\mid\calF_{n-1}]|\calF_{n-1}]\\
    &=\mathbb{E}_\theta[Y_n\mid\calF_{n-1}] -\mathbb{E}_\theta[Y_n\mid\calF_{n-1}\mid F_{i-1}]|\calF_{n-1}]=0,
\end{align}
and $|V_n-V_{n-1}|\leq 1$, we can apply Azuma's inequality: 
\begin{align}
    \mathbb{P}_\theta\lp F_{n}^c\rp=\mathbb{P}_\theta\lp|V_n|>nc_n\rp\leq2\exp\lp-\frac{nc_n^2}{2}\rp=\frac{1}{n^2}.
\end{align}

\subsubsection{Proof of Lemma~\ref{lemma:sequential-test-2}}
\label{proof:sequential-test-2}
The proof of this statement is modeled according to the classical regret analysis techniques for the upper confidence bound~(UCB) strategy for multiarmed bandits. In particular, if $\hat{j}_i=j\neq \theta$ at time $i$, then we must have, under $E_n$, $\forall\,i\leq n+1$,
\begin{align}
    \hat{\bp}_{i-1}[j]  > \hat{\bp}_{i-1}[\theta]  \implies \hat{\bp}_{i-1}[j] + b^{(n)}_{i-1} > \hat{\bp}_{i-1}[\theta] + b^{(n)}_{i-1} \geq \bp_\theta[\theta]  \label{eq:sequential-test-proof-3}
\end{align}
The first inequality is due to the selection rule~\eqref{eq:j-selection}, while the third inequality says that under the event $E_n$, $\bp_\theta[\theta]$ is not larger than the upper bound. Using these simple observations, we have for $i \geq 2$:
\begin{align}
    \Delta_\theta - \Delta_\theta[\hat{j}_i] &= \bp_\theta[\theta] - \bp_\theta[\hat{j}_i] \\
    & \leq \hat{\bp}_{i-1}[\hat{j}_i] + b^{(n)}_{i-1, \hat{j}_i}- \bp[\hat{j}_i] &&  (\text{by equation~\eqref{eq:sequential-test-proof-3}}) \\
    & \leq \hat{\bp}_{i-1}[\hat{j}_i] + b^{(n)}_{i-1} - \hat{\bp}_{i-1}[\hat{j}_i] + b^{(n)}_{i-1} && (\text{event $E_n$}) \\
    & = 2 \sqrt{\frac{\log 2n^3 }{2(i-1)}} && (\text{definition of $b^{(n)}_{i-1}$})  \\
    & =  \sqrt{\frac{2 \log 2n^3}{i-1}}. \label{eq:sequential-test-proof-4} 
\end{align}
Now, summing this from $i=1$ to $n$, we get 
\begin{align}
    \sum_{i=1}^n \Delta_\theta - \Delta_\theta[\hat{j}_i] & \leq 1 + \sum_{i=2}^n \sqrt{\frac{2 \log 2n^3}{i-1}} \leq \sqrt{2 \log 2n^3} \sum_{i=1}^n \frac{1}{\sqrt{i}} \leq \sqrt{2 n \log 2n^3}, 
\end{align}
where in the second inequality, we have used the fact that $\sqrt{2 \log 2n^3} > 1$ for $n \geq 2$, and the third inequality follows from the bound $\sum_{i=1}^n 1/\sqrt{i} \leq \sqrt{n}$.  Dividing throughout by $n$, gives us the required inequality 
\begin{align}
    \frac{1}{n} \sum_{i=1}^n \Delta_\theta[\hat{j}_i] \geq \Delta_\theta - \sqrt{\frac{2 \log 2n^3}{n}}. 
\end{align}
\subsubsection{Proof of Lemma~\ref{lemma:sequential-test-3}}
\label{proof:sequential-test-3}
This is a direct consequence of the previous lemma, and the definition of the event $F_n$. In particular, recall that under the event $F_n$, we have 
\begin{align}
     \lv \frac{1}{n} \sum_{i=1}^n Y_i + \Delta_\theta[\hat{j}_i] \rv \leq c_n \defined \sqrt{\frac{2 \log 2n^2}{n}}.  
\end{align}
This fact, combined with Lemma~\ref{lemma:sequential-test-2} implies that 
\begin{align}
    \frac{1}{n} \sum_{i=1}^n Y_i \leq -\frac{1}{n} \sum_{i=1}^n \Delta_\theta[\hat{j}_i] + \sqrt{\frac{2 \log 2n^2}{n}} \leq - \Delta_\theta + \sqrt{\frac{2 \log 2n^3}{n}} + \sqrt{\frac{2 \log 2n^2}{n}} \leq -\Delta_\theta + \sqrt{\frac{8 \log 2n^3}{n}}, 
\end{align}
where we get the last inequality by  loosely upper bounding $\sqrt{ (2 \log 2n^2)/n}$ with $\sqrt{(2 \log 2n^3)/n}$. Finally, $N_0$ is defined as the smallest value of $n$ at which the term $\sqrt{(8 \log 2n^3)/n}$ goes below $\Delta_\theta/2$, and thus, for all $n \geq N_0$, we have $\frac{1}{n} \sum_{i=1}^n Y_i \leq -\Delta_\theta/2$. 

\subsubsection{Proof of Lemma~\ref{lemma:sequential-test-4}}
\label{proof:sequential-test-4}
The starting point of the proof of this statement is the regret bound for the mixture method~(see, for example,~\cite[Chapter 4]{hazan2016introduction}), which tells us that 
\begin{align}
    \log W_n &=\log \lp \int_{-1}^0 \lp \prod_{i=1}^n (1 + \lambda Y_i) \rp d\lambda  \rp \geq  \sup_{\lambda \in [-1/2,0]}  \sum_{i=1}^n\log (1 + \lambda Y_i  ) - 2 \log n,  \label{eq:regret_bound_for_mixture_mdethod}
\end{align}
Note that we have also used the fact that by restricting the domain of $\lambda$ from $[-1,0]$ to $[-1/2, 0]$, we can only reduce the value of the supremum. 
Now, we can use the lower bound $\log(1 + x) \geq x - x^2$ for $x \geq -0.5$ to conclude that 
\begin{align}
    \frac{\log W_n }{n} + \frac{2 \log n}{n} \geq \sup_{\lambda \in [-1/2,0]} \lambda  \lp \frac 1 n \sum_{i=1}^n Y_i \rp - \lambda^2 \lp \frac 1 n \sum_{i=1}^n Y_i^2 \rp \geq 
    \sup_{\lambda \in [-1/2,0]} \lambda  \lp \frac 1 n \sum_{i=1}^n Y_i \rp - \lambda^2.  
\end{align}
This quadratic lower bound is maximized at $\lambda^* = (1/2) \frac{1}{n} \sum_{i=1}^n Y_i$, which for $n \geq N_0$ lies in the set $[-1/2,  -\Delta_\theta/4] \subset [-1/2, 0]$ by Lemma~\ref{lemma:sequential-test-3}. Thus, the optimizing value $\lambda^*$ lies in the valid domain, and plugging this into the lower bound expression, we get 
\begin{align}
    \frac{\log W_n }{n} + \frac{2 \log n}{n} \geq \frac{1}{4} \lp \frac 1 n \sum_{i=1}^n Y_i \rp^2 \geq \frac{\Delta^2_\theta}{16}.  \label{eq:sequential-test-proof-6}
\end{align}
The last inequality again uses the fact that  for $n \geq N_0$, and under the event $E_n \cap F_n$, we know from Lemma~\ref{lemma:sequential-test-3} that $(1/n) \sum_{i=1}^n Y_i \leq - \Delta_\theta/2$, which implies that $\lp \frac 1 n \sum_{i=1}^n Y_i\rp^2 \geq \Delta^2_\theta/4$. On rearranging this,  we get 
\begin{align}
    \frac{\log W_n}{n} \geq \frac{\Delta^2_\theta}{16} - \frac{2\log n}{n}, \quad \text{for} \quad n \geq N_0. 
\end{align}
Finally, let us define $N_1 = \inf \{n \geq 1: (2 \log n)/n \leq \Delta^2_\theta/32\}$, and conclude that for $n \geq N_1\vee N_0$, we have the required
\begin{align}
    \frac{\log W_n}{n} \geq \frac{\Delta^2_\theta}{32}. 
\end{align}
\section{Proof of Proposition~\ref{theorem:identification}}\label{proof:identification}
In this proof, we show that the indicator $\hat{j}_t=\argmax_{j\in\calL}\hat{p}_{t-1}[j]$, where  $\hat{p}_{t-1}[j]=\frac{1}{t-1}\sum_{i=1}^{t-1}\boldsymbol{1}_{g(X_i)=j}$, eventually identify the true underlying distribution index. Since we have $\mathbb{E}_\theta[\mathbf{1}_{g(X)=j}]=\mathbb{P}_\theta(g(X)=j)$ for any $j\in\calL$ and $\theta\in\calL$ and $\{g(X_i)\}_{i\ge1}$ is a \emph{i.i.d.} random sequence, we can apply Hoeffding's inequality to show the desired result. For any $\theta\in\calL$,
\begin{align}
    \mathbb{P}_\theta\lp \hat{j}_t\neq\theta \rp&=\mathbb{P}_\theta\lp \exists m\in\calL\setminus\{\theta\}\,s.t.\, \frac{1}{t-1}\sum_{i=1}^{t-1}\boldsymbol{1}_{g(X_i)=m}>\frac{1}{t-1}\sum_{i=1}^{t-1}\boldsymbol{1}_{g(X_i)=\theta}\rp\\
    &\leq\sum_{m\in\calL\setminus\{\theta\}}\mathbb{P}_\theta\lp \sum_{i=1}^{t-1}\lp \boldsymbol{1}_{g(X_i)=m}-\boldsymbol{1}_{g(X_i)=\theta} \rp >0 \rp,
\end{align}
where the inequality is from the union bound.
Then, by defining
\[
a_i:=\lp \boldsymbol{1}_{g(X_i)=m}-\boldsymbol{1}_{g(X_i)=\theta} \rp - \lp \mathbb{P}_\theta\lp g(X_i)=m\rp - \mathbb{P}_\theta\lp g(X_i)=\theta\rp \rp\qtext{for each} i\ge1,
\]
we have 
\begin{align}
    \mathbb{P}_\theta\lp \hat{j}_t\neq\theta \rp&\leq\sum_{m\in\calL\setminus\{\theta\}}\mathbb{P}_\theta\lp\sum_{i=1}^{t-1} a_i>-\sum_{i=1}^{t-1}\lp\mathbb{P}_\theta\lp g(X_i)=m\rp - \mathbb{P}_\theta\lp g(X_i)=\theta\rp\rp \rp\\
    &=\sum_{m\in\calL\setminus\{\theta\}}\mathbb{P}_\theta\lp\sum_{i=1}^{t-1}a_i>(t-1)\Delta_{\theta,m}\rp,
\end{align}
where $\Delta_{\theta,m}:=\mathbb{P}_\theta\lp g(X_i)=\theta\rp-\mathbb{P}_\theta\lp g(X_i)=m\rp=\mathbb{P}_\theta\lp g(X)=\theta\rp-\mathbb{P}_\theta\lp g(X)=m\rp=\bp_\theta[\theta]-\bp_{\theta}[m]>0$, for any $m\in\calL\setminus\{\theta\}$ by the fact that $g$ is a separable classifier for the given $\mathbf{P}\in\mathcal{P}_{\text{sep}}$.

Then, since $-2\leq a_i\leq2$, by Hoeffding's inequality, we have 
\begin{align}
    \mathbb{P}_\theta\lp\hat{j}_t\neq\theta\rp\leq\exp\lp-\frac{(\Delta_{\theta,m}(t-1))^2}{4t}\rp\in O(e^{-t}).
\end{align}
Hence, by the Borel–Cantelli lemma, one can obtain the implication described in Proposition~\ref{theorem:identification}.

The proof of Corollary~\ref{cor:indication} is provided as follows: 
Applying Proposition~\ref{theorem:identification}, we can bound the misidentification probability under the alternative at the stopping time. Fix $\theta\in[L]$ and any integer $K\ge 1$, 
\begin{align}
\mathbb{P}_\theta(\hat{j}_{\tau}\neq \theta)
&\le \mathbb{P}_\theta({\tau}\ge K,\ \hat{j}_{\tau}\neq \theta)+\mathbb{P}_\theta(\tau< K) \label{eq:misid_split}\\
&\le \mathbb{P}_\theta(\exists n\ge K,\,\hat{j}_n\neq \theta)+\mathbb{P}_\theta(\tau< K) \notag\\
&\le \sum_{n=K}^{\infty}\mathbb{P}_\theta(\hat{j}_n\neq \theta)
\;+\;\sum_{n=1}^{K-1}\mathbb{P}_\theta\!\left(W_n\ge \frac{1}{\alpha}\right), \label{eq:misid_union}
\end{align}
where the last inequality uses a union bound together with the definition $\tau=\inf\{n\ge 1: W_n\ge 1/\alpha\}$.

The first summation decays exponentially in $K$ by Proposition~\ref{theorem:identification}, i.e., $\sum_{n=K}^{\infty}\mathbb{P}_\theta(\hat{j}_n\neq \theta)\in O(e^{-K})$. For the second term, by the fact that $W_n\ge 0$ for any $n\ge1$, Markov's inequality yields $\mathbb{P}_\theta\left(W_n\ge \frac{1}{\alpha}\right)
\le
\alpha\,\mathbb{E}_\theta[W_n]$,
and hence
\[
\sum_{n=1}^{K-1}\mathbb{P}_\theta\!\left(W_n\ge \frac{1}{\alpha}\right)
\;\le\;
\alpha\sum_{n=1}^{K-1}\mathbb{E}_\theta[W_n].
\]
Using the fact that $\mathbb{E}_\theta[W_n]=O(2^n)$, we obtain
\[
\alpha\sum_{n=1}^{K-1}\mathbb{E}_\theta[W_n]
\;\le\;
\alpha\sum_{n=1}^{K-1}O(2^n)
\;=\;
O(\alpha\,K2^K).
\]
Combining the above bounds gives
\[
\mathbb{P}_\theta(\hat{j}_\tau=\theta)
=1-\mathbb{P}_\theta(\hat{j}_\tau\neq\theta)
\;\ge\;
1-O(e^{-K})-O(\alpha\,K2^K).
\]
In particular, choosing $K\in \Theta(\log_2(1/\sqrt{\alpha}))$ implies $\alpha\,K2^K\to 0$ and $e^{-K}\to 0$ as $\alpha\to 0$. 

\section{Proof of Proposition~\ref{prop:veri_assumption}}\label{proof:assumption}
In this proof, for any ground truth distribution tuple $\mathbf{P}\in\mathcal{P}_{
\text{sep}}$, we show that the ERM algorithm provided in Proposition~\ref{prop:veri_assumption} finds a classifier $\hat{g}\in\calG$ such that it is a separable classifier for $\mathbf{P}=\{P_0,\ldots,P_L\}$.

Before proving this result, we recall the notations used in Proposition~\ref{prop:veri_assumption} as follows: for any $g\in\calG$, $\vartheta,m\in\calL$,
\[
\hat{p}_{\vartheta,m}(g)
\defined
\frac{1}{N}\sum_{i=1}^{N}\mathbbm{1}\{g(T_{\vartheta,i})=m\},
\qquad
p_{\vartheta,m}(g)
\defined
\mathbb{P}_{X\sim P_\vartheta}\bigl(g(X)=m\bigr),
\]
where $T_{\vartheta,i}$ is the $i$-th training sample drawn from $P_\vartheta$.
Note that $p_{\vartheta,m}(g)$ coincides with $\bp_\vartheta[m]$ used in Section~\ref{sec:proposed_test}, but we represent it as a function of $g$ to describe the ERM algorithm.

Let us fix any distribution tuple $\mathbf{P}\in\mathcal{P}_{\text{sep}}$. From the definition of $\mathcal{P}_{\text{sep}}$, there exists a classifier $g^*\in\calG$ such that $p_{\vartheta,\vartheta}>\max_{m\in\calL\setminus\{\vartheta\}}p_{\vartheta,m}$ for any $\vartheta\in\calL$. Therefore, we have, for every $\vartheta\in\calL$ and $m\in\calL\setminus\{\vartheta\}$,
\begin{equation}
    p_{\vartheta,\vartheta}(g^*) \;>\; p_{\vartheta,m}(g^*) + 4\gamma\qtext{where}\gamma \defined \frac{\Delta^*}{8}\qtext{and}\Delta^*
\defined
\min_{\vartheta\in\calL}\min_{m\in\calL\setminus\{\vartheta\}}\Bigl(p_{\vartheta,\vartheta}(g^*)-p_{\vartheta,m}(g^*)\Bigr).\label{eq:ve_assump_1}
\end{equation}

Then, we construct our ERM algorithm as follows: 
\[
\hat{g}:=\argmax_{g\in\mathcal{G}}\hat{\Delta}(g)\qtext{where}\hat{\Delta}(g):=\min_{\vartheta\in\mathcal{L}}\min_{m\in\calL\setminus\{\vartheta\}}\left(\hat{p}_{\vartheta,\vartheta}(g) - \hat{p}_{\vartheta,m}(g)\right).
\]
We show that for any $\delta\in(0,1)$, when $N>\max\{d,\frac{8}{\gamma^2}\log\frac{(L+1)^24(e2N)^d/d^d}{\delta}\}$,
\[
\mathbb{P}\left(\forall\,\vartheta\in\mathcal{L},\,p_{\vartheta,\vartheta}(\hat{g})>\max_{m\in\calL\setminus\{\vartheta\}}p_{\vartheta,m}(\hat{g})\right)\geq 1-\delta.
\]

First of all, let $F_m=\{f_g:x\rightarrow\mathbf{1}_{g(x)=m}\,|\,g\in\mathcal{G}\}$ be a collection of binary functions over all $g\in\calG$. We have, for any $\vartheta,m\in\mathcal{L}$ and $\delta\in(0,1)$,
\begin{align}
    \mathbb{P}\left(\sup_{g\in \mathcal{G}}\vert\hat{p}_{\vartheta,m}(g)-p_{\vartheta,m}(g)\rvert>\gamma \right)&=\mathbb{P}\left(\sup_{f_g\in F_m}\lvert \frac{1}{N}\sum_{i=1}^{N}f_g(T_{\vartheta,i})-\mathbb{E}\left[\frac{1}{N}\sum_{i=1}^{N}f_g(T_{\vartheta,i})\right]\rvert>\gamma\right)\\
    &\leq\delta\qtext{if}N>\frac{8}{\gamma^2}\log\frac{4s(F_m,2N)}{\delta},\label{eq:ve_assumption_5}
\end{align}
where the probability and the expectation are taken under ${T_{\vartheta,i}\overset{i.i.d.}{\sim}P_\vartheta}$ and the last inequality is by Vapnik–Chervonenkis theorem \cite[Theorem 22.18]{wasserman2004all} with the finite growth function $s(F_m,2N)$. Specifically, according to Section~\ref{sec:problem_form}, we assume that the complexity of  $\mathcal{G}$ is bounded, i.e., the VC dimension of $F_m$ is upper bounded by a constant $d$ for any $m\in\calL$. Therefore, by Sauer's theorem, we have $s(F_m, 2N)\leq (\frac{e2N}{d})^d$ for all $N\geq d$. %

Then, let $\mathcal{A}:=\{\forall\,\vartheta,m\in\mathcal{L},\sup_{g\in \mathcal{G}}\vert\hat{p}_{\vartheta,m}(g)-p_{\vartheta,m}(g)\rvert\leq\gamma\}$. We have 
\begin{align}
    \mathbb{P}\left(\forall\,\vartheta\in\mathcal{L},\,p_{\vartheta,\vartheta}(\hat{g})>\max_{m\in\calL\setminus\{\vartheta\}}p_{\vartheta,m}(\hat{g})\right)\geq \mathbb{P}\left(\forall\,\vartheta\in\mathcal{L},\,p_{\vartheta,\vartheta}(\hat{g})>\max_{m\in\calL\setminus\{\vartheta\}}p_{\vartheta,m}(\hat{g})\vert\mathcal{A}\right)\mathbb{P}(\mathcal{A}).
\end{align}
Under the event $\mathcal{A}$, we have, $\forall\,g\in\mathcal{G}$,
\begin{align}
    p_{\vartheta,\vartheta}(g)-p_{\vartheta,m}(g)&\geq \hat{p}_{\vartheta,\vartheta}(g)-\gamma -(\hat{p}_{\vartheta,m}(g)+\gamma)\\
    &=\hat{p}_{\vartheta,\vartheta}(g)-\hat{p}_{\vartheta,m}(g)-2\gamma \quad\forall\, \vartheta,m\in\mathcal{L}. \label{eq:ve_assumption_2}
\end{align}
Also, we have the opposite direction, $\forall\,g\in\mathcal{G}$,
\begin{align}
    \hat{p}_{\vartheta,\vartheta}(g)-\hat{p}_{\vartheta,m}(g)\geq p_{\vartheta,\vartheta}(g)-p_{\vartheta,m}(g) - 2\gamma\quad\forall\, \vartheta,m\in\mathcal{L}. \label{eq:ve_assumption_3}
\end{align}
Hence, we get, for any $\vartheta\in\calL$, $m\in\calL\setminus\{\vartheta\}$,
\begin{align}
     \hat{\Delta}(g^*)=\hat{p}_{\vartheta,\vartheta}(g^*)-\hat{p}_{y,m}(g^*)\geq p_{\vartheta,\vartheta}(g^*)-p_{\vartheta,m}(g^*) - 2\gamma>2\gamma, \label{eq:ve_assumption_4}
\end{align}
where the last inequality is due to \eqref{eq:ve_assump_1}.

Then, we have, for any $\vartheta\in\calL$,
\begin{align}
    p_{\vartheta,\vartheta}(\hat g)-p_{\vartheta,m}(\hat g)\geq \hat p_{\vartheta,\vartheta}(\hat g)-\hat p_{\vartheta,m}(\hat g) - 2\gamma=\hat{\Delta}(\hat{g})-2\gamma>2\gamma-2\gamma=0\quad\forall\, m\in \calL\setminus\{\vartheta\},
\end{align}
where the first inequality uses \eqref{eq:ve_assumption_2}, the first equality uses the definition of $\hat{\Delta}$, and the second inequality uses the fact that $\hat{\Delta}(\hat{g})\geq\hat{\Delta}(g^*)$ and \eqref{eq:ve_assumption_4}.
Therefore, we have $\mathbb{P}\left(\forall\,\vartheta\in\mathcal{L},\,p_{\vartheta,\vartheta}(\hat{g})>\max_{m\in\calL\setminus\{\vartheta\}}p_{\vartheta,m}(\hat{g})\vert\mathcal{A}\right)=1$. 

Then, by $\mathbb{P}(\mathcal{A})\geq 1- \mathbb{P}(\mathcal{A}^c)$ and \eqref{eq:ve_assumption_5}
\begin{align}
    \mathbb{P}\left(\exists \vartheta,m\in\calL,\,\sup_{g\in \mathcal{G}}\vert\hat{p}_{\vartheta,m}(g)-p_{\vartheta,m}(g)\rvert>\gamma \right)\leq (L+1)^2\delta \qtext{if}N>\frac{8}{\gamma^2}\log\frac{4s(F_m,2N)}{\delta},
\end{align}
let $\delta=\delta'/(L+1)^2$ for any $\delta'\in(0,1)$, leading to
\[
\mathbb{P}\left(\forall\,\vartheta\in\mathcal{L},\,p_{\vartheta,\vartheta}(\hat{g})>\max_{m\in\calL\setminus\{\vartheta\}}p_{\vartheta,m}(\hat{g})\right)\geq 1-\delta'\qquad\forall\, N\geq \max\{d,\frac{8}{\gamma^2}\log\frac{(L+1)^24(e2N)^d/d^d}{\delta'}\}.
\]
By a simple algebra, we can obtain a simple sufficient condition that $N \ge \max\{d, \frac{16d}{\gamma^2} \log\left(\frac{16e}{\gamma^2}\right) + \frac{16}{\gamma^2} \log\left(\frac{4(L+1)^2}{\delta'}\right)\}$. Hence, as $\delta'\rightarrow0$, since $d, \gamma, L$ are all constants, we get $N\in\Omega\left(\frac{1}{\gamma^2}\log\left(\frac{L}{\delta'}\right)\right)$.

We end this section by providing the formal version of Proposition~\ref{prop:veri_assumption}.
\begin{proposition}[Empirical attainability (\textbf{Formal})]
For any $\mathbf{P} \in \mathcal{P}_{\text{sep}}$, there exists $g^*\in\mathcal{G}$ such that $\Delta^*
\defined
\min_{\vartheta\in\calL}\min_{m\in\calL\setminus\{\vartheta\}}\Bigl(p_{\vartheta,\vartheta}(g^*)-p_{\vartheta,m}(g^*)\Bigr)
>0$ by the definition of $\mathcal{P}_{\text{sep}}$.
Let $\gamma\defined \Delta^*/8$, $F_m:=\{f_g:x\rightarrow\mathbf{1}_{g(x)=m}\,|\,g\in\mathcal{G}\}$ for any $m\in\calL$. Suppose for any $\delta\in(0,1)$,
$N\geq \max\left\{d,\frac{8}{\gamma^2}\log\frac{(L+1)^24(e2N)^d/d^d}{\delta}\right\}$ and  $\text{VCdim}(F_m)\leq d$ for any $m\in\calL$.
Then,  with probability at least $1-\delta$, the ERM solution $\hat g$ satisfies $p_{\vartheta,\vartheta}(\hat g)>\max_{m\in\calL\setminus\{\vartheta\}}p_{\vartheta,m}(\hat g)$ for any $\vartheta\in\calL$. 
\end{proposition}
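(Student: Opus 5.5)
The plan is to reduce the formal statement to a uniform-convergence event over the class $\calG$, on which the ERM solution $\hat g$ provably inherits a positive margin from the separable classifier $g^*$. Fix $\mathbf{P}\in\mathcal{P}_{\text{sep}}$ and its separable $g^*$, so that $p_{\vartheta,\vartheta}(g^*)-p_{\vartheta,m}(g^*)\ge\Delta^*=8\gamma$ for all $\vartheta\neq m$. Define the good event
\[
\mathcal{A}\defined\Bigl\{\forall\,\vartheta,m\in\calL:\ \sup_{g\in\calG}\bigl|\hat p_{\vartheta,m}(g)-p_{\vartheta,m}(g)\bigr|\le\gamma\Bigr\}.
\]
The proof then has two parts: a deterministic argument that $\hat g$ is separable on $\mathcal{A}$, and a probabilistic bound showing $\mathbb{P}(\mathcal{A})\ge1-\delta$ under the stated condition on $N$.

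\textbf{Deterministic step.} On $\mathcal{A}$, every difference $\hat p_{\vartheta,\vartheta}(g)-\hat p_{\vartheta,m}(g)$ lies within $2\gamma$ of $p_{\vartheta,\vartheta}(g)-p_{\vartheta,m}(g)$, uniformly in $g\in\calG$. Applied to $g^*$, this gives $\hat\Delta(g^*)\ge 8\gamma-2\gamma>2\gamma$. Since $\hat g$ maximizes $\hat\Delta$, we get $\hat\Delta(\hat g)\ge\hat\Delta(g^*)>2\gamma$. Transferring back to the population quantities costs another $2\gamma$, so
\[
p_{\vartheta,\vartheta}(\hat g)-p_{\vartheta,m}(\hat g)\ \ge\ \hat\Delta(\hat g)-2\gamma\ >\ 0
\]
for all $\vartheta\neq m$. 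This is exactly separability of $\hat g$. The choice $\gamma=\Delta^*/8$ leaves ample slack, since only $4\gamma<\Delta^*$ is actually needed.

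\textbf{Probabilistic step.} For each pair $(\vartheta,m)$, consider the binary class $F_m=\{x\mapsto\mathbf{1}_{g(x)=m}:g\in\calG\}$. Then $\hat p_{\vartheta,m}(g)-p_{\vartheta,m}(g)$ is an empirical-process deviation over $F_m$ based on $N$ i.i.d.\ samples from $P_\vartheta$. The Vapnik--Chervonenkis theorem \cite[Theorem 22.18]{wasserman2004all} bounds this deviation's tail by $4s(F_m,2N)e^{-N\gamma^2/8}$. Since $\mathrm{VCdim}(F_m)\le d$ and $N\ge d$, Sauer's lemma gives $s(F_m,2N)\le(2eN/d)^d$. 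A union bound over the $(L+1)^2$ pairs $(\vartheta,m)$ then yields
\[
\mathbb{P}(\mathcal{A}^c)\le(L+1)^2\cdot 4(2eN/d)^d e^{-N\gamma^2/8}.
\]
The assumed condition $N\ge\frac{8}{\gamma^2}\log\frac{4(L+1)^2(2eN)^d/d^d}{\delta}$ is precisely the requirement that this right-hand side be at most $\delta$. Combining the two steps gives the conclusion with probability at least $1-\delta$.

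The main obstacle is bookkeeping rather than a conceptual difficulty. One must make sure the VC bound is applied with matching constants, and in particular check that the threshold stated in \cite{wasserman2004all} has the form $N\gamma^2/8$. The condition on $N$ is implicit, since $N$ appears on both sides, but the statement takes it as a hypothesis, so no inversion is needed. If the constants in the cited theorem differ, only the factor $8$ in the condition changes; the structure of the argument does not.
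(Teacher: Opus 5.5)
Your proposal is correct and follows the paper's proof essentially step for step. It uses the same uniform-deviation event $\mathcal{A}$ at level $\gamma$ and the same deterministic chain $\hat\Delta(\hat g)\ge\hat\Delta(g^*)>2\gamma$, which gives $p_{\vartheta,\vartheta}(\hat g)-p_{\vartheta,m}(\hat g)>0$. It also uses the same VC-theorem, Sauer's-lemma and $(L+1)^2$-fold union-bound computation. The paper applies the VC tail in the same form $4s(F_m,2N)e^{-N\gamma^2/8}$, so your bookkeeping reproduces the stated condition on $N$ exactly; the paper merely phrases the union bound via $\delta=\delta'/(L+1)^2$ rather than absorbing the factor $(L+1)^2$ directly.
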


\section{Proof of Proposition~\ref{prop:lower_bound_of_N}}\label{proof:lower_bound_of_N}
In this proof, we establish a necessary condition for the training sample size $N$ considering the family of level-$\alpha$, power-one tests (consisting of the training function $\phi$ and the stopping rule $\tau$). Recall that we assume, for any test in that family, if the test achieves the criteria for a $\mathbf{P}\in\mathcal{P}_{\text{sep}}$, it also achieves the criteria for any permutation of $\mathbf{P}$. Note that the ``test'' mentioned in this section represents the ``overall'' test, which consists of the training function and the stopping rule. %

Without loss of generality, we pick two distributions $P_m$ and $P_\theta$ (for any $m,\theta\in\mathcal{L}$ and $m\neq \theta$) in a given tuple $\mathbf{P}\in\mathcal{P}_{\text{sep}}$. Let $\mathbf{P}=\{P_m,P_\theta,\ldots,P_L\}$ and $\mathbf{P}'=\{P_\theta,P_m,\ldots,P_L\}$, where the positions of $P_m$ and $P_\theta$ are swapped. Suppose the alternative distribution is drawn from the second distribution in both tuples. That is, under $\mathbf{P}$, we have $H_0:X_t\sim P_m$ and $H_1:X_t\sim P_\theta$, while under $\mathbf{P}'$, we have $H_0: X_t \sim P_\theta$ and $H_1: X_t \sim P_m$. Moreover, from the viewpoint of the classifier, the classifier trained under $\mathbf{P}$ (resp., $\mathbf{P}'$) assigns label $0$ to samples generated by $P_m$ (resp., $P_\theta$), and label $1$ to samples generated by $P_\theta$ (resp., $P_m$).

Then, by the level-$\alpha$, power-one criterion and the assumption mentioned above, $\pi$ must satisfy
\begin{align}\label{eq:level_power_binary}
&\mathbb{P}_{\mathbf{P},0}(\tau<\infty)\le \alpha\qtext{and}
\mathbb{P}_{\mathbf{P},1}(\tau<\infty)=1\\
&\mathbb{P}_{\mathbf{P}',0}(\tau<\infty)\le \alpha\qtext{and}
\mathbb{P}_{\mathbf{P}',1}(\tau<\infty)=1
\end{align}

The idea of the proof is connecting the distinction between these ``two worlds'' and the training sample size. 
Then, we define $A = \{\tau < \infty\}$ and apply Data Processing Inequality (DPI) \cite[Chapter 3]{Polyanskiy_Wu_2025} to get
\begin{align}
    D(\mathbb{P}_{\mathbf{P}',1} \| \mathbb{P}_{\mathbf{P},0}) &\ge d(\mathbb{P}_{\mathbf{P}',1}(A) \| \mathbb{P}_{\mathbf{P},0}(A))\\
    &= 1 \cdot \log \frac{1}{\mathbb{P}_{\mathbf{P},0}(A)} + 0 \cdot \log \frac{0}{1 - \mathbb{P}_{\mathbf{P},0}(A)}\\
    &\geq\log \frac{1}{\alpha}
\end{align}
where $d(p \| q)$ is the binary KL divergence, 
the equality is due to the power-one property $\mathbb{P}_{\mathbf{P}',1} (A)= 1$, and the last inequality uses the level-$\alpha$ property $\mathbb{P}_{\mathbf{P},0}(A) \le \alpha$.

Then, by the definition of $\mathbb{P}_{\mathbf{P}',1}$ and $\mathbb{P}_{\mathbf{P},0}$, we have
\begin{align}
    D(\mathbb{P}_{\mathbf{P}',1} \| \mathbb{P}_{\mathbf{P},0}) =  N (D(P_\theta \| P_m) + D(P_m \| P_\theta)) = N \cdot 2 \mathrm{J}(P_m, P_\theta),
\end{align}
where $\mathrm{J}(P_m, P_\theta):=\frac{1}{2}(D(P_m\|P_\theta)+D(P_\theta\|P_m))$. The first equality follows from the chain rule of KL-divergence and the fact that the training samples are drawn i.i.d. Additionally, under both $\mathbf{P}$ (assuming $H_0$ is true) and $\mathbf{P}'$ (assuming $H_1$ is true), the testing samples are drawn from $P_m$.

Consequently, we have
\begin{equation}
    N \cdot 2 \mathrm{J}(P_m, P_\theta) \ge \log \frac{1}{\alpha} \implies N \ge \frac{\log \frac{1}{\alpha}}{2 \mathrm{J}(P_m, P_\theta)}.
\end{equation}
Since the above inequality holds for any $\theta\neq m$, we get 
\begin{align}
    N\ge \frac{\log \frac{1}{\alpha}}{2\min_{\theta\neq m} \mathrm{J}(P_m, P_\theta)}.
\end{align}

\section{Proof of Theorem~\ref{thm:type-II_lower_bound}}\label{proof:type-II_lower_bound}
In this section, we prove the lower bound of $\Psi\defined\inf_{\pi\in\Pi_{\alpha}(\mathcal{P}_{\text{sep}})}\sup_{\mathbf{P}\in\mathcal{\mathcal{P}_{\text{sep}}}}\mathbb{P}^\pi_{\mathbf{P},\theta}(\tau > n)$ described in Theorem~\ref{thm:type-II_lower_bound}, where $\Pi_{\alpha}(\mathcal{P}_{\text{sep}})$ is the uniformly achievable family of tests defined in Section~\ref{sec:minimax} and $\mathcal{P}_{\text{sep}}$ is the distinguishable family of distributions defined in Section~\ref{sec:problem_form}. We assume that both training and testing data are identical, i.e., no mismatch.

Note that in this section, the classifier $g$ is not fixed as described in Section~\ref{sec:proposed_test}. To study the minimax lower bound over all learning policies and the impact of the classifier, we account for its randomness. Specifically, $g$ depends on the training tuple $\{T^N_\vartheta\}_{\vartheta\in\calL}$ because it is the output of the training policy, i.e., $\phi(\{T^N_\vartheta, \vartheta\}_{\vartheta\in\calL})=g\in\calG$. Moreover, we restrict our attention to a \emph{classifier-based} sequential test. This means the decision-maker determines stopping based on the classifier's output rather than the raw data during the online phase. Therefore, throughout this section, the stopping event $\{\tau\leq t\}$ is adapted to a \emph{smaller} filtration denoted as $\tilde{\mathcal{F}}_{t} \defined \{\{T^N_\vartheta \}_{\vartheta\in\calL}, \{g(X_s)\}_{s\le t}\}$ for any $t\ge1$.

Then, we recall the uniformly achievable family of such classifier-based sequential tests over $\mathcal{P}_{\text{sep}}$ as follows: 
\[
\Pi_\alpha(\mathcal{P}_{\text{sep}}) \coloneqq \left\{ \pi : \sup_{\mathbf{P} \in \mathcal{P}_{\text{sep}}} \mathbb{P}^{\pi}_{\mathbf{P}, 0}(\tau<\infty) \leq \alpha \qtext{and} \inf_{\mathbf{P} \in \mathcal{P}_{\text{sep}}} \mathbb{P}^{\pi}_{\mathbf{P}, \theta}(\tau<\infty) = 1 \ \ \forall \theta \in [L] \right\}.
\]
Our goal is to demonstrate how the training sample size $N$, the family of classifiers $\calG$, and $n$ impact the lower bound of $\Psi\defined\inf_{\pi\in\Pi_{\alpha}(\mathcal{P}_{\text{sep}})}\sup_{\mathbf{P}\in\mathcal{P}_{\text{sep}}}\mathbb{P}^\pi_{\mathbf{P},\theta}(\tau > n)$. The main idea is to utilize Le Cam's two-point method \cite[Chapter 31]{Polyanskiy_Wu_2025}, a useful technique for proving minimax lower bounds from an information-theoretic viewpoint. We start by applying a decomposition used in the converse of \cite[Theorem 2]{Hsu25} and the data processing inequality (DPI) \cite[Chapter 3]{Polyanskiy_Wu_2025} to obtain a general lower bound for $\mathbb{P}^\pi_{\mathbf{P},\theta}(\tau>n)$ for any $\theta\in[L]$. Then, taking the infimum and supremum of this bound, we apply Le Cam's two-point method to introduce the impact of $N$ and $\calG$ simultaneously. Finally, using Assumption~\ref{assum:classifier_capacity_limit}, we conclude the proof. Note that we will describe the formal statement of Assumption~\ref{assum:classifier_capacity_limit} in the following proof.

\paragraph{Step 1: Decomposition and DPI}
Consider any $\mathbf{P}\in\mathcal{P}_{\text{sep}}$ and $\pi\in\Pi_\alpha(\mathcal{P}_{\text{sep}})$, where the null distribution is $P_0$ and the alternative distribution is $P_\theta$ for any $\theta\in[L]$. We then apply a variant decomposition provided in the converse proof of \cite[Theorem 2]{Hsu25}. Specifically, let $A=\{\tau>n\}$, then we have 
\begin{align}
    d(\mathbb{P}^{\pi}_{\mathbf{P},0}(A)\Vert\mathbb{P}^{\pi}_{\mathbf{P},\theta}(A))&=\mathbb{P}^{\pi}_{\mathbf{P},0}(A)\log\frac{\mathbb{P}^{\pi}_{\mathbf{P},0}(A)}{\mathbb{P}^{\pi}_{\mathbf{P},\theta}(A)}+\mathbb{P}^{\pi}_{\mathbf{P},0}(A^c)\log\frac{\mathbb{P}^{\pi}_{\mathbf{P},0}(A^c)}{\mathbb{P}^{\pi}_{\mathbf{P},\theta}(A^c)}\\
    &=-h_b(\mathbb{P}^{\pi}_{\mathbf{P},0}(A))-\mathbb{P}^{\pi}_{\mathbf{P},0}(A)\log\mathbb{P}^{\pi}_{\mathbf{P},\theta}(A)-\mathbb{P}^{\pi}_{\mathbf{P},0}(A^c)\log\mathbb{P}^{\pi}_{\mathbf{P},\theta}(A^c).
\end{align}
where $d(p\Vert q)=p\log\frac{p}{q}+(1-p)\log\frac{1-p}{1-q}$ and $h_b(p)=p\log1/p+(1-p)\log(1/(1-p))$. Therefore, by rearrangement, we have
\begin{align}
    \log\frac{1}{\mathbb{P}^\pi_{\mathbf{P},\theta}(A)}=\frac{1}{\mathbb{P}^{\pi}_{\mathbf{P},0}(A)}\left(d(\mathbb{P}^{\pi}_{\mathbf{P},0}(A)\Vert\mathbb{P}^{\pi}_{\mathbf{P},\theta}(A))+h_b(\mathbb{P}^{\pi}_{\mathbf{P},0}(A))+\mathbb{P}^{\pi}_{\mathbf{P},0}(A^c)\log\mathbb{P}^{\pi}_{\mathbf{P},\theta}(A^c)\right).
\end{align}
By the level-$\alpha$ property, we have $\mathbb{P}^{\pi}_{\mathbf{P},0}(A)=1-\mathbb{P}^{\pi}_{\mathbf{P},0}(\tau\leq n)\geq 1-\alpha$. Moreover, since $A\in\tilde {\mathcal{F}}_n$, by data processing (applied to the map $\omega\mapsto \mathbf 1\{\tau(\omega)>n\}$),
we have $d(\mathbb P^\pi_{\mathbf{P},0}(A)\Vert \mathbb P^\pi_{\mathbf{P},\theta}(A))\le D(\mathbb P^\pi_{\mathbf{P},0}\mid_{ \tilde{\mathcal{F}}_n}\Vert \mathbb P^\pi_{\mathbf{P},\theta}\mid_{\tilde{\mathcal{F}}_n})$, where $\mathbb P^\pi_{\mathbf{P},0}\mid_{ \tilde{\mathcal{F}}_n}$ denotes the joint distribution of the tuple $\{\{T_\vartheta^N\}_{\vartheta\in\calL},\{g(X_i)\}_{i=1}^{N}\}$. Hence, we get 
\begin{align}
    \log\frac{1}{\mathbb{P}^\pi_{\mathbf{P},\theta}(A)}\leq\frac{1}{1-\alpha}\left(D(\mathbb P^\pi_{\mathbf{P},0}\mid_{\tilde{\mathcal{F}}_n}\Vert \mathbb P^\pi_{\mathbf{P},\theta}\mid_{\tilde{\mathcal{F}}_n}))+\log(2)\right),
\end{align}
where the inequality uses the fact that $h_b(p)\leq \log(2)$ and $\log\mathbb{P}^\pi_{\mathbf{P},\theta}(A^c)\leq0$.

Before proceeding with the proof, let $(\bp_m^g)^{\otimes n}$ denote the joint distribution of $\{g, g(X_i)\}_{i=1}^{n}$ where $X_i\simiid P_m$ for any $m\in\calL$ and $g=\phi(\{T^N_\vartheta,\vartheta\}_{\vartheta\in\calL})$. Given $g$, if $m\in\calL$ is the ground truth distribution, these random variables are drawn i.i.d. from the distribution of $g(X)$ where $X\sim P_m$ (denoted as $\bp_m$ defined in Section~\ref{sec:proposed_test}). In this section, to emphasize the role of the classifier, we let $\bp^g_m\equiv\bp_m$ under a given classifier $g$. Consequently, we have $(\bp_m^g)^{\otimes n}=\Pi_{i=1}^{n} \bp^g_m$ if $g$ is given. Then, we provide an upper bound for the KL-divergence term. Denote $\mathbf{T}^N:=\{T_\vartheta^N\}_{\vartheta\in\calL}$.
\begin{align}
    D(\mathbb P^\pi_{\mathbf{P},0}\mid_{\tilde{\mathcal{F}}_n}\Vert \mathbb P^\pi_{\mathbf{P},\theta}\mid_{\tilde{\mathcal{F}}_n})
    &=D(\Pi_{\vartheta=0}^{L}P_\vartheta^{\otimes N}\cdot (\bp_0^g)^{\otimes n}\Vert \Pi_{\vartheta=0}^{L}P_\vartheta^{\otimes N}\cdot (\bp_\theta^g)^{\otimes n})\\
    &= D(\Pi_{\vartheta=0}^{L}P_\vartheta^{\otimes N}\Vert \Pi_{\vartheta=0}^{L}P_\vartheta^{\otimes N}) + D((\bp_0^g)^{\otimes n}\Vert (\bp_\theta^g)^{\otimes n}\,|\,\Pi_{\vartheta=0}^{L}P_\vartheta^{\otimes N})\label{eq:two-stage-lower-bound-1}\\
    &=\mathbb{E}_{\mathbf{T}^N}\left[D\left((\bp_0^g)^{\otimes n|\mathbf{T}^N}\lVert (\bp_\theta^g)^{\otimes n|\mathbf{T}^N}\right)\right]\label{eq:two-stage-lower-bound-2}\\
    &=n\mathbb{E}_{\mathbf{T}^N}\left[D\left(\bp_0^g\lVert \bp_\theta^g\right)\right]\label{eq:two-stage-lower-bound-3}
\end{align}
where \eqref{eq:two-stage-lower-bound-1} uses the chain rule, \eqref{eq:two-stage-lower-bound-2} is since the first term is zero and the definition of the conditional KL-divergence (note that $(\bp_0^g)^{\otimes n|\mathbf{T}^N}$ denotes the conditional distribution of $(\bp_0^g)^{\otimes n}$ conditioning on the training samples), and \eqref{eq:two-stage-lower-bound-3} is from our problem formulation that given the training tuple $\mathbf{T}^N$, equivalently given $g$, the output of the classifier of $X_i$ is $g(X_i)\simiid\bp^g_\theta$ for each $i\leq n$.

Therefore, plugging \eqref{eq:two-stage-lower-bound-3} back into the type-II error bound, we get
\begin{align}
    \log\frac{1}{\mathbb{P}^\pi_{\mathbf{P},\theta}(A)}\leq\frac{1}{1-\alpha}\left(n\mathbb{E}_{\mathbf{T}^N}\left[D\left(\bp_0^g\lVert \bp_\theta^g\right)\right]+\log(2)\right).\label{eq:step_1}
\end{align}
Now, we further upper-bound the expectation term to get a general lower bound of $\mathbb{P}^\pi_{\mathbf{P},\theta}(A)$. By $\mathbb{E}_{\mathbf{T}^N}\left[D\left(\bp_0^g\lVert \bp_\theta^g\right)\right]\leq \sup_{g\in\mathcal{G}}D(\bp_0^g\Vert \bp^g_\theta)$, where $\mathcal{G}$ is the family of classifier, we have
\begin{align}
    &\log\frac{1}{\mathbb{P}^\pi_{\mathbf{P},\theta}(A)}\leq\frac{1}{1-\alpha}\left(n \sup_{g\in\mathcal{G}}D(\bp_0^g\Vert \bp_\theta^g)+\log2\right)\\
    &\Rightarrow \mathbb{P}^\pi_{\mathbf{P},\theta}(\tau>n)\geq \exp\left(-\frac{1}{1-\alpha}\left(n \sup_{g\in\mathcal{G}}D(\bp_0^g\Vert \bp_\theta^g)+\log2\right)\right).%
\end{align}
Consequently, we obtain a general lower bound that is valid for all $\theta\in[L]$, $\pi\in\Pi_\alpha(\mathcal{P}_{\text{sep}})$, and $\mathbf{P}\in\mathcal{P}_{\text{sep}}$. Observe that for sufficiently large $n$, the dominant exponent is $\sup_{g\in\mathcal{G}}D(\bp_0^g\Vert \bp_\theta^g)$. Therefore, if $\calG$ is sufficiently rich, one can find a classifier with stronger separability for $P_0$ and $P_\theta$, leading to a smaller lower bound on the type-II error. However, we remark that no matter how rich $\calG$ is, by DPI, $D(\bp_0^g\Vert \bp_\theta^g)\leq D(P_0\|P_\theta)$, which is an intrinsic problem difficulty.

\paragraph{Step 2: Apply Le Cam's two-point method}
Although we have derived the general lower bound, it does not show the impact of the training sample size $N$. This is because we upper bounded the expectation term $\mathbb{E}_{\mathbf{T}^N}\left[D\left(\bp_0^g\lVert \bp_\theta^g\right)\right]$, which encodes the information about the training sample size, by simply taking the supremum over the family of classifiers. We now apply Le Cam's two-point method to introduce the impact of $N$ and $\calG$ simultaneously in the minimax setting.

By applying minimax operation on \eqref{eq:step_1}, we have 
\begin{align}
    \inf_{\pi\in\Pi_\alpha(\mathcal{P}_{\text{sep}})}\sup_{\mathbf{P}\in\mathcal{P}_{\text{sep}}}\mathbb{P}^\pi_{\mathbf{P},\theta}(\tau>n)\geq \exp\left(-\frac{1}{1-\alpha}\left(n \sup_{\pi\in\Pi_\alpha(\mathcal{P}_{\text{sep}})}\inf_{\mathbf{P}\in\mathcal{P}_{\text{sep}}}\mathbb{E}_{\mathbf{T}^N}\left[D\left(\bp_0^g\lVert \bp_\theta^g\right)\right]+\log2\right)\right),
\end{align}
Then, let us provide a non-trivial upper bound for the $\sup\inf$ expectation term. 

Let $D^*(\mathbf{P}):=\sup_{g\in\mathcal{G}}D(\bp_0^g\Vert \bp^g_\theta)$. We can write $\mathbb{E}_{\mathbf{T}^N}\left[D\left(\bp_0^g\lVert \bp_\theta^g\right)\right]= D^*(\mathbf{P})-\left(D^*(\mathbf{P})-\mathbb{E}_{\mathbf{T}^N}\left[D\left(\bp_0^g\lVert \bp_\theta^g\right)\right]\right)$.
Following this, we define the \emph{non-negative loss} (or suboptimality gap) $\mathcal{R}(g,\mathbf{P}):=D^*(\mathbf{P})-D\left(\bp_0^g\lVert \bp_\theta^g\right)$ (note that $g$ depends on the tuple $\mathbf{T}^N$), leading to 
\begin{align}
    \mathbb{E}_{\mathbf{T}^N}\left[D\left(\bp_0^g\lVert \bp_\theta^g\right)\right] = D^*(\mathbf{P})-\mathbb{E}_{\mathbf{T}^N}[\mathcal{R}(g,\mathbf{P})].\label{eq:step2}
\end{align}
Then, we consider two tuples in $\mathcal{P}_{\text{sep}}$ which are parametrized by $\nu_0$ and $\nu_1$, denoted as $\mathbf{P}_{\nu_0}$ and $\mathbf{P}_{\nu_1}$ defined in Section~\ref{sec:minimax}, where $\nu_0$, $\nu_1$ are defined in Theorem~\ref{thm:type-II_lower_bound}, $\nu_1=\nu_0+\delta$ for some $\delta>0$. These two tuples serve as the basic elements in Le Cam's two-point method.

Therefore, from \eqref{eq:step2}, we have 
\begin{align}
    \sup_{\pi\in\Pi_\alpha(\mathcal{P}_{\text{sep}})}\inf_{\mathbf{P}\in\mathcal{P}_{\text{sep}}}\mathbb{E}_{\mathbf{T}^N}\left[D\left(\bp_0^g\lVert \bp_\theta^g\right)\right]
    &\leq \sup_{\pi\in\Pi_\alpha(\mathcal{P}_{\text{sep}})}\min_{\mathbf{P}\in\{\mathbf{P}_{\nu_0}, \mathbf{P}_{\nu_1}\}} \mathbb{E}_{\mathbf{T}^N}\left[D\left(\bp_0^g\lVert \bp_\theta^g\right)\right]\\
    &\equiv\sup_{\pi\in\Pi_\alpha(\mathcal{P}_{\text{sep}})}\min_{\nu\in\{\nu_0,\nu_1\}}\mathbb{E}_{\mathbf{T}^N}\left[D(\bp^g_{0,\nu}\|\bp^g_{\theta,\nu})\right]\\
    &\leq\max_{\nu\in\{\nu_0,\nu_1\}} D^*(\mathbf{P}_\nu)-\inf_{\pi\in\Pi_\alpha(\mathcal{P}_{\text{sep}})}\max_{\nu\in\{\nu_0,\nu_1\}}\mathbb{E}_{\mathbf{T}^N}[\mathcal{R}(g,\mathbf{P}_\nu)],\label{eq:step2_1}
\end{align}
where $\bp^g_{m,\nu}\defined\mathbb{P}_{X\sim P_{m,\nu}}(g(X)=\cdot)$ is the probability mass function of $g(X)$ where $X\sim P_{m,\nu}$ (the $m$-th distribution in the tuple $\mathbf{P}_\nu$) for any $m\in\calL$ and $\nu\in\{\nu_0,\nu_1\}$, and the first inequality uses the fact that taking the infimum over a smaller feasible set is larger.  %

Then, we continue to lower-bound the second term in \eqref{eq:step2_1}. Following the standard argument of Le Cam's two-point method \cite[Chapter 31]{Polyanskiy_Wu_2025}, we get 
\begin{align}
    \inf_{\pi\in\Pi_\alpha(\mathcal{P}_{\text{sep}})}\max_{\nu\in\{\nu_0,\nu_1\}}\mathbb{E}_{\mathbf{T}^N}[\mathcal{R}(g,\mathbf{P}_\nu)]&
    \geq\frac{\mathcal{R}_{\nu_0,\nu_1}}{2}\left(1-\text{TV}\left( \Pi_{i=0}^{L}P^{\otimes N}_{i,\nu_0}, \Pi_{i=0}^{L}P^{\otimes N}_{i,\nu_1}\right)\right)\\
    &\geq\frac{\mathcal{R}_{\nu_0,\nu_1}}{4}\exp\left(-N\sum_{i=0}^{L}D(P_{i,\nu_0}\Vert P_{i,\nu_1})\right)
\end{align}
where $\mathcal{R}_{\nu_0,\nu_1}:=\inf_{g\in\mathcal{G}}\left(\mathcal{R}(g,\mathbf{P}_{\nu_0})+\mathcal{R}(g,\mathbf{P}_{\nu_1})\right)$, the first inequality is from Le Cam's two-point method, and the second inequality is due to
\begin{align}
    \text{TV}\left( \Pi_{i=0}^{L}P^{\otimes N}_{i,\nu_0}, \Pi_{i=0}^{L}P^{\otimes N}_{i,\nu_1}\right)&\leq1-\frac{1}{2}\exp\left(-D\left( \Pi_{\vartheta=0}^{L}P^{\otimes N}_{\vartheta,\nu_0}\Vert \Pi_{\vartheta=0}^{L}P^{\otimes N}_{\vartheta,\nu_1} \right)\right)\\
    &=1-\frac{1}{2}\exp\left(-N\sum_{i=0}^{L}D(P_{i,\nu_0}\Vert P_{i,\nu_1})\right),
\end{align}
in which the first inequality is due to \cite[Lemma 7]{Duchi2024} and the second equality is due to the independence of training samples. 

Therefore, we get the minimax lower bound: For any $\theta\in[L]$,
\begin{align}
    &\log\inf_{\pi\in\Pi_\alpha(\mathcal{P}_{\text{sep}})}\sup_{\mathbf{P}\in\mathcal{P}_{\text{sep}}}\mathbb{P}^\pi_{\mathbf{P},\theta}(\tau>n)\\
    &\geq -\frac{1}{1-\alpha}\left(n\left(\max_{\nu\in\{\nu_0,\nu_1\}} D^*(\mathbf{P}_\nu)-\frac{\mathcal{R}_{\nu_0,\nu_1}}{4}\exp\left(-N\sum_{i=0}^{L}D(P_{i,\nu_0}\Vert P_{i,\nu_1})\right)\right)+\log2\right).
\end{align}
Now, by the assumption described in Theorem~\ref{thm:type-II_lower_bound}, $D(P_{i,\nu_0}\Vert P_{i,\nu_1})\leq M \delta^2$, $\forall\,i\in\calL$, we have
\begin{align}
    &\log\inf_{\pi\in\Pi_\alpha(\mathcal{P}_{\text{sep}})}\sup_{\mathbf{P}\in\mathcal{P}_{\text{sep}}}\mathbb{P}^\pi_{\mathbf{P},\theta}(\tau>n)\\
    &\geq -\frac{1}{1-\alpha}\left(n\left(\max_{\nu\in\{\nu_0,\nu_1\}} D^*(\mathbf{P}_\nu)-\frac{\mathcal{R}_{\nu_0,\nu_1}}{4}\exp\left(-NM(L+1)\delta^2\right)\right)+\log2\right)\\
    &\geq -\frac{1}{1-\alpha}\left(n\left(\max_{\nu\in\{\nu_0,\nu_1\}} D(P_{0,\nu}\Vert P_{\theta,\nu})-\frac{\mathcal{R}_{\nu_0,\nu_1}}{4}\exp\left(-NM(L+1)\delta^2\right)\right)+\log2\right),
\end{align}
where the last inequality is due to DPI.

Finally, by Assumption~\ref{assum:classifier_capacity_limit} (we provide its formal version below), we can get the result shown in Theorem~\ref{thm:type-II_lower_bound}.
\begin{assumption}[Classifier capacity limit (\textbf{Formal})]\label{assump:classifier_capacity_limit_formal}
    For the considered $\mathbf{P}_{\nu_0}$ and $\mathbf{P}_{\nu_1}$, there exists a bounded constant $B>0$ such that $\mathcal{R}_{\nu_0,\nu_1}:=\inf_{g\in\mathcal{G}}\left(\mathcal{R}(g,\mathbf{P}_{\nu_0})+\mathcal{R}(g,\mathbf{P}_{\nu_1})\right)>B$.
\end{assumption}

\begin{remark}
    The bound, $D(P_{i,\nu_0}\Vert P_{i,\nu_1})\leq M \delta^2$, is a classical result in information geometry \cite[Exercise 11.7]{cover06}, which shows that $D(P_{\theta}\Vert P_\theta')=\frac{1}{2}I_\theta(\theta-\theta')^2+o((\theta-\theta')^2)$ (abusing notation, we assume $\theta\in\mathbb{R}$ here, rather than the distribution index), where $I_\theta:=\mathbb{E}_{X\sim P_\theta} \left[ \frac{\partial}{\partial \theta} \ln P_{\theta}(X) \right]^2$ is the Fisher information. If $|\theta'-\theta|=\delta$ is sufficiently small, we obtain the bound. However, we do not specify the design of $\nu_0$ and $\nu_1$ in this paper, leaving it as future work.
\end{remark}

\section{Proof of Theorem~\ref{theorem:change-detection}}
\label{proof:change-detection}

\paragraph{ARL control.} Observe that the process $\{M_n: n \geq 1\}$ is a Shirayev-Roberts type e-process as defined by~\cite{shin2022detectors}, since each $\{W_n^{(k)}: n \geq k\}$ is an e-process for $P_0$. Hence, as a consequence of~\cite[Theorem 2.4]{shin2022detectors}, we have 
    $\mathbb{E}_\infty[\tau] \geq \frac{1}{\alpha}$, 
when there is no change. 

\paragraph{Detection Delay Bound.} Now suppose that there is a change in the distribution at some time $T$. Let us introduce the stopping time $\tau^{(k)} = \inf \{n-k + 1: W^{(k)}_n \geq 1/\alpha,\; n \geq k\}$. Then, a simple consequence of the definition of $\tau$ is that 
\begin{align}
    \tau \leq \inf_{k \geq 1}\lp \tau^{(k)} + k - 1\rp \leq  \inf_{k \geq T+1}\lp \tau^{(k)} + k - 1\rp \leq \tau^{(T+1)} + T, 
\end{align}
almost surely. Hence, we have 
\begin{align}
    (\tau - T)^+ \leq \tau^{(T+1)}. 
\end{align}
Observe that $\tau^{(T+1)}$ is the same as the sequential test we analyzed under the alternative in~\Cref{theorem:sequential-test}. Under the assumption that the post-change observations $\{X_t: t \geq T+1\}$ are independent of the pre-change observations $\{X_t: t \leq T\}$, we then have, assuming $\theta\in\{1,\ldots,L\}$ is the ground truth parameter of the post-change distribution,
\begin{align}
    \mathbb{E}_{T,\theta}\lb (\tau - T)^+ |\calF_T \rb \leq \mathbb{E}_{T,\theta}[\tau^{(T+1)}\mid\calF_T] = \mathbb{E}_{T,\theta}[\tau^{(T+1)}] = \mathbb{E}_\theta[\tau^{(1)}]. 
\end{align}
The term $E_\theta[\tau^{(1)}]$ satisfies the same upper bound we derived in~\Cref{theorem:sequential-test}, which by the above chain implies the upper bound required for the detection delay by taking a supremum  over $T \in \mathbb{N}$.

\end{document}